\numberwithin{equation}{section}
\newtheorem{thm}{Theorem}[section]
\newtheorem{cor}[thm]{Corollary}
\newtheorem{lem}[thm]{Lemma}
\newtheorem{prop}[thm]{Proposition}
\theoremstyle{definition}
\newtheorem{dfn}[thm]{Definition}
\newtheorem{ques}{Question}
\theoremstyle{remark}
\newtheorem*{rmk}{Remark}
\newcommand{\reDeclareMathOperator}[2]{\let#1\undefined \DeclareMathOperator{#1}{#2}}
\reDeclareMathOperator{\mod}{mod}
\DeclareMathOperator{\dom}{dom}
\DeclareMathOperator{\lev}{lev}
\DeclareMathOperator{\diam}{diam}
\reDeclareMathOperator{\supp}{supp}
\DeclareMathOperator*{\argmin}{arg\,min}
\reDeclareMathOperator{\Proj}{Proj}
\reDeclareMathOperator{\prox}{prox}
\DeclareMathOperator{\grp}{grp}
\DeclareMathOperator{\co}{co}
\DeclareMathOperator{\cl}{cl}
\renewcommand{\emptyset}{\varnothing}
\newcommand{\CAT}{\rm CAT}
\newcommand{\0}{\mathbf{0}}
\newcommand{\R}{\mathbb{R}}
\newcommand{\N}{\mathbb{N}}
\renewcommand{\to}{\rightarrow}
\newcommand{\dto}{\downarrow}
\newcommand{\tendsto}{\longrightarrow}
\newcommand{\abs}[1]{\left|#1\right|}
\newcommand{\pair}[2]{\langle#1,#2\rangle}
\renewcommand{\multimap}{\rightrightarrows}
\renewcommand{\Vec}[1]{\overrightarrow{#1}}
\renewcommand*{\@fnsymbol}[1]{\ifcase#1\or*\else\@arabic{#1}\fi}
\title{Equilibrium Problems and Proximal Algorithms in Hadamard Spaces}
\author{Poom Kumam and Parin Chaipunya\thanks{Corresponding author.}
\vspace{-.15cm}\footnote{}}
\affil{\footnotesize
Department of Mathematics, Faculty of Science,\\
King Mongkut's University of Technology Thonburi, \\
126 Pracha Uthit Rd., Bang Mod, Thung Khru,\\
Bangkok 10140, Thailand.\\
Emails: poom.kum@kmutt.ac.th (P. Kumam),\\ parin.cha@mail.kmutt.ac.th (P. Chaipunya).
}
\date{}
\begin{document}
\maketitle \vspace{-1.2cm}
\thispagestyle{empty}
\begin{abstract}

In this paper, we consider the equilibrium problems and also their regularized problems under the setting of Hadamard spaces. The solution to the regularized problem is represented in terms of resolvent operators. As an essential machinery in the existence of an equilibrium, we first prove that the KKM principle is attained in general Hadamard spaces without assuming the compactness of the closed convex hull of a finite set. We construct the proximal algorithm based on this regularization and give convergence analysis adequately.

\medskip

\footnotesize{\noindent\bf Keywords:} Equilibrium problems, Proximal algorithms, KKM principle, Hadamard space.

\smallskip

\footnotesize{\noindent\bf 2010 MSC:} 90C33, 65K15, 49J40, 49M30, 47H05.
\end{abstract}\normalsize

%\footnotesize
%\tableofcontents*
%\normalsize

\section{Introduction}

Equilibrium problems were originally studied in \cite{MR1292380} as a unifying class of variational problems. Given a nonempty set $K$, and a bifunction $F : K \times K \to \R$. The equilibrium problem $EP(K,F)$ is formulated as follows: 
\[
\text{Find a point $\bar{x} \in K$ such that $F(\bar{x},y) \geq 0$ for every $y \in K$.}	\tag*{$EP(K,F)$}
\]
By assigning different settings to $F$, we can include, \emph{e.g.,} minimization, minimax inequalities, variational inequalities, and fixed point problems in the class of equilibrium problems. The set of all solutions of $EP(K,F)$ is denoted by $\mathcal{E}(K,F)$. Typical studies for $EP(K,F)$ are extensively carried out in Banach or Hilbert spaces, and recently in Hadamard manifolds.

%In Hadamard manifolds, we can see \cite{MR1326607} that our basic set of assumptions are satisfied for directional derivative of a convex function. That is, $h(x,y) := f'(x;v_{x})$ is convex (and hence continuous) in the second argument.

Proximal algorithm is one of the most elementary method used in solving several classes of variational problems. The main idea of the method is to perturb (or regularize) the original problem into a simpler and more well-behaved problem, and solve for each steps the perturbed subproblems. This method was originally proposed under the setting of Hilbert spaces by Martinet \cite{MR0298899}, and was progressively developed by Rockafellar \cite{MR0410483}.

Proximal algorithms in Hadamard spaces were recently investigated. In particular, it was introduced for minimizing convex functionals by Ba\v{c}\'{a}k in \cite{MR3047087}, and were extended for solving variational inequalities in \cite{MR3411805} and for solving zeros of maximal monotone operators in \cite{MR3679017,MR3691338}.

Equilibrium problems in Hadamard manifold were studied by \cite{MR2869729} and were later extended to equilibrium problems for bifunctions defined on proximal pairs in \cite{MR3338819}. Both of the results rely on different variants of the KKM lemma (consult \cite{zbMATH02573692} for the original version), but the latter is strongly based on Brouwer's fixed point theorem in Hadamard manifolds (see \cite{MR1942573}). In fact, the Brouwer's theorem is proved in Hadamard spaces in \cite{MR2565775,MR2561730} and the KKM principle was subsequently proved in Hadamard space with the convex hull finite property (CHFP), \emph{i.e.,} every polytope is assumed to be compact (see also the convex hull property in \cite{MR1385525}).

Based on this KKM principle and under CHFP assumption, Kimura and Kishi \cite{Kimura-Kishi} studied the equilibrium problem and showed the well-definedness of the resolvent associated to a bifunction. In particular, they also proved that this resolvent is firmly nonspreading and has fixed point set identical to the equilibrium points. Finally, they apply a convergence theorem from \cite{MR3213144} to solve for an equilibrium.

In this paper, we show that the KKM principle can be extended to any Hadamard spaces without further assumptions. Using the KKM principle, we show the existence of an equilibrium of a bifunction under standard continuity, convexity, and compactness/coercivity assumptions. Then, we deduce several fundamental properties of the resolvent operator introduced by Kimura and Kishi \cite{Kimura-Kishi}. We also deliver some comparisons with resolvents of convex functionals and of monotone vector fields. We finally define the proximal algorithm by iterating resolvent operators and provide adequate convergence analysis of the algorithm. Apart from dropping the CHFP assumption, the contents presented in this paper provide a continuation from the works of Niculescu and Roven\c{t}a \cite{MR2565775,MR2561730} in the study of KKM principle, of Kimura and Kishi \cite{Kimura-Kishi} in the study of bifunctions, their resolvents, and proximal algorithms, and of the authors \cite{MR3691338} in the relationships between bifunctions and vector fields as well as their resolvents and convergence results.

The organization of this paper is as follows. The next section collects useful basic knowledges used in the rest of this paper. We also give in this section several auxiliary results that will be exploited to validate subsequent results. In particular, properties concerning the product $\pair{\cdot}{\cdot}$ and the weaker convergence notions are explained. Section \ref{sec:KKM} is devoted to the discussion on the KKM theory on Hadamard spaces. This contains the key tool for proving existence theorems, which will be found in Section \ref{sec:existence}. We also deduce here the dual problem in the sense of Minty and give relationships with the primal priblem. In Section \ref{sec:resolvents}, we introduce the resolvent operator corresponds to a bifunction. Properties of the resolvents are thoroughly deduced, especially with the nonexpansivity of the operator. This leads to the study of Section \ref{sec:convergence}, which contains the construction of the proximal algorithm and also its convergence analysis.

\section{Preliminaries and Auxiliaries}\label{sec:prelim}

We divide this section into several parts, describing each topics in brief details. This includes some basic definitions and natations up to the technical results that will be used in our main results in the next sections.

%\subsection{Hadamard Manifolds}
%
%A \emph{Hadamard manifold} $M$ is a simply-connected, geodesically complete Riemannian manifold with everywhere non-positive sectional curvature. Assume here that $M$ is an Hadamard manifold of dimension $m$. As is usual in Riemannian geometry, we write $T_{p}M$ to denote the \emph{tangent space} at the point $p \in M$. This tangent space is a linear space isometry to the $m$-dimensional Euclidean space. The tangent bundle $TM$ is then defined to be the disjoint union $\bigsqcup_{p \in M} T_{p}M$, which is a $2m$-dimensional manifold.

\subsection{Hadamard spaces}

A uniquely geodesic metric space $(X,\rho)$ is a $\CAT(0)$ space if each geodesic triangle in $X$ is at least as thin as its comparison triangle in Euclidean plane. A complete $\CAT(0)$ space is then called \emph{Hadamard space}. The following characterization of a $\CAT(0)$ space is useful.
\begin{prop}
For a geodesic metric space $(X,\rho)$, the following conditions are all equivalent:
\begin{enumerate}[label=(\roman*)]
\item $X$ is $\CAT(0)$.
\item (\cite{MR2460066}) For any $x,u,v \in X$ and $\lambda \in [0,1]$, the following inequality holds:
\[\label{eqn:CN}
\rho^{2}(x,\gamma_{u,v}(\lambda)) \leq (1-\lambda)\rho^{2}(x,u) + \lambda\rho^{2}(x,v) - \lambda(1-\lambda)\rho^{2}(u,v). \tag{CN}
\]
\item (\cite{MR2390077}) For any $x,y,u,v \in X$, the following inequality holds:
\begin{equation}\label{prop:ql-ineq}
\rho^{2}(x,v) + \rho^{2}(y,u) \leq \rho^{2}(x,u) + \rho^{2}(y,v) + 2\rho(x,y)\rho(u,v).
\end{equation}
\end{enumerate}
\end{prop}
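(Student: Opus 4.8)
The plan is to prove $(i)\Leftrightarrow(ii)$ directly, then $(i)\Rightarrow(iii)$, and finally to close the loop with $(iii)\Rightarrow(i)$. The equivalence $(i)\Leftrightarrow(ii)$ is classical (cf.\ \cite{MR2460066}) and $(i)\Leftrightarrow(iii)$ is the theorem of Berg and Nikolaev \cite{MR2390077}; essentially all of the difficulty is concentrated in the implication $(iii)\Rightarrow(i)$.

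For $(i)\Rightarrow(ii)$ I would fix $x,u,v\in X$ and $\lambda\in[0,1]$, put $m:=\gamma_{u,v}(\lambda)$, and pass to a comparison triangle $\bar{\triangle}(\bar{x},\bar{u},\bar{v})\subset\R^{2}$ of $\triangle(x,u,v)$, in which the comparison point of $m$ is $\bar{m}=(1-\lambda)\bar{u}+\lambda\bar{v}$. Expanding $\norm{\bar{x}-(1-\lambda)\bar{u}-\lambda\bar{v}}^{2}$ yields the Euclidean identity
\[
\norm{\bar{x}-\bar{m}}^{2}=(1-\lambda)\norm{\bar{x}-\bar{u}}^{2}+\lambda\norm{\bar{x}-\bar{v}}^{2}-\lambda(1-\lambda)\norm{\bar{u}-\bar{v}}^{2},
\]
so that the $\CAT(0)$ comparison inequality $\rho(x,m)\leq\norm{\bar{x}-\bar{m}}$, together with the fact that comparison triangles preserve side lengths, gives (CN). For the converse $(ii)\Rightarrow(i)$ I would first note that (CN), applied with $x$ a point on a second geodesic joining $u$ and $v$, forces that geodesic to agree with $\gamma_{u,v}$, so $X$ is uniquely geodesic; then (CN) and the displayed identity give the comparison inequality between a vertex $p$ and a point $x=\gamma_{q,r}(\lambda)$ on the opposite side, and the full thin-triangle condition for arbitrary $x\in[q,r]$, $y\in[p,r]$ follows by Alexandrov's lemma --- replace $\triangle(x,p,r)$ by a Euclidean triangle with two sides unchanged and the third no longer, apply the vertex case to it, and compare the two planar pictures. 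This direction is routine.

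For $(i)\Rightarrow(iii)$ I would view $x,y,v,u$, in this cyclic order, as the vertices of a geodesic quadrilateral, so that its diagonals join $x$ to $v$ and $y$ to $u$, and majorize it by a convex Euclidean quadrilateral with vertices $\tilde{x},\tilde{y},\tilde{v},\tilde{u}$ --- Reshetnyak's majorization theorem, which here amounts to developing in $\R^{2}$ the two comparison triangles of the subdivision along $[x,v]$ and convexifying via Alexandrov's lemma. The majorization preserves the side lengths, $\norm{\tilde{x}-\tilde{u}}=\rho(x,u)$, $\norm{\tilde{y}-\tilde{v}}=\rho(y,v)$, $\norm{\tilde{x}-\tilde{y}}=\rho(x,y)$, $\norm{\tilde{u}-\tilde{v}}=\rho(u,v)$, and can only lengthen the diagonals, $\rho(x,v)\leq\norm{\tilde{x}-\tilde{v}}$ and $\rho(y,u)\leq\norm{\tilde{y}-\tilde{u}}$. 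Feeding this into the exact planar identity $\norm{\tilde{x}-\tilde{v}}^{2}+\norm{\tilde{y}-\tilde{u}}^{2}=\norm{\tilde{x}-\tilde{u}}^{2}+\norm{\tilde{y}-\tilde{v}}^{2}+2\pair{\tilde{x}-\tilde{y}}{\tilde{u}-\tilde{v}}$ and bounding the last term by Euclidean Cauchy--Schwarz, $\pair{\tilde{x}-\tilde{y}}{\tilde{u}-\tilde{v}}\leq\norm{\tilde{x}-\tilde{y}}\,\norm{\tilde{u}-\tilde{v}}$, yields the inequality in $(iii)$; the remaining degenerate configurations are verified directly.

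The step I expect to be the genuine obstacle is $(iii)\Rightarrow(i)$. In contrast to (CN), the inequality in $(iii)$ is only one half of the quasilinearization identity --- a Cauchy--Schwarz bound for the product $\pair{\cdot}{\cdot}$ --- and it provides no comparison point to exploit, so an admissible comparison triangle must be manufactured from it. Following Berg and Nikolaev \cite{MR2390077}, one would first deduce from $(iii)$ that $X$ is uniquely geodesic, and then recover the midpoint form of (CN) by applying $(iii)$ repeatedly to the points produced by successive dyadic subdivisions of the relevant geodesics and passing to the limit; once (CN) is available, $(ii)\Rightarrow(i)$ closes the cycle. It is precisely this passage from the one-sided bound $(iii)$ back to the full comparison geometry --- not any of the routine implications above --- that carries the weight of the proof.
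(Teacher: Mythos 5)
The paper does not actually prove this proposition: it is stated as a known characterization, with (ii) attributed to \cite{MR2460066} and (iii) to \cite{MR2390077}, and no argument is supplied. Your outline therefore adds content rather than mirroring anything in the text, and the parts you carry out are correct. The comparison-triangle derivation of \eqref{eqn:CN} from the $\CAT(0)$ inequality, together with the converse via the vertex case and Alexandrov's lemma, is the standard Bruhat--Tits argument; and your proof of (i)$\Rightarrow$(iii) by majorizing the quadrilateral $x,y,v,u$ --- the four sides $\rho(x,y),\rho(y,v),\rho(v,u),\rho(u,x)$ are preserved, the two diagonals $\rho(x,v),\rho(y,u)$ can only lengthen, and the planar identity
\[
\norm{\tilde{x}-\tilde{v}}^{2}+\norm{\tilde{y}-\tilde{u}}^{2}=\norm{\tilde{x}-\tilde{u}}^{2}+\norm{\tilde{y}-\tilde{v}}^{2}+2\pair{\tilde{x}-\tilde{y}}{\tilde{u}-\tilde{v}}
\]
combined with Euclidean Cauchy--Schwarz finishes it --- is a complete and clean proof of that implication, correctly identifying (iii) as the Cauchy--Schwarz bound $\pair{\Vec{uv}}{\Vec{xy}}\leq\rho(u,v)\rho(x,y)$ for the quasilinearization. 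The one place where you offer a sketch rather than a proof is (iii)$\Rightarrow$(i): the subdivision-and-limit strategy you describe is indeed how Berg and Nikolaev proceed, but you do not execute it, so at that point your argument rests on the citation exactly as the paper's statement does. For a preliminary quoted from the literature this is an acceptable resolution, but be aware that the proposition is not thereby made self-contained: the genuinely difficult direction remains outsourced to \cite{MR2390077}.
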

\begin{dfn}
A convex set $K \subset X$ is said to be \emph{flat} if the \eqref{eqn:CN} inequality holds as an equality for each $x,u,v \in K$.
\end{dfn}

Let $(X,\rho)$ be an Hadamard space. For each $x,y \in X$, we write $\gamma_{x,y} : [0,1] \to X$ to denote the normalized geodesic joining $x$ and $y$, \emph{i.e.,} $\gamma_{x,y} (0) = x$, $\gamma_{x,y} (1) = y$, and $\rho(\gamma_{x,y}(t),\gamma_{x,y}(t')) = \rho(x,y)\abs{t-t'}$ for all $t,t' \in [0,1]$. We also adopt the following notations: $\llbracket x,y \rrbracket := \{\gamma_{x,y}(t) ; t \in [0,1]\}$ and $(1-t)x \oplus t y := \gamma_{x,y}(t)$ for $t \in [0,1]$. A subset $K \subset X$ is said to be \emph{convex} if $\llbracket x,y \rrbracket \subset K$ for any $x,y \in K$, and for a set $E \subset X$ we write $\co(E)$ to denote the smallest convex set that contains $E$. Certainly, we call $\co(E)$ the \emph{convex hull} of $E$. A function $h : K \to \R$, with $K$ being convex, is called \emph{convex} (resp., \emph{quasi-convex}) if $h\circ \gamma_{x,y} : [0,1] \to \R$ is convex (resp., \emph{quasi-convex}) for any $x,y \in K$. Moreover, $h$ is called \emph{concave} (resp., \emph{quasi-concave}) if $-h$ is convex (resp., \emph{quasi-convex}).

If $(X,d)$ and $(X',d')$ are two Hadamard space, then the product $X \times X'$ is also an Hadamard space with the metric given by
\[
\rho\left((x,x'),(y,y')\right) := \left(d^{2}(x,y) + d'^{2}(x',y')\right)^{\frac{1}{2}}, \quad \forall (x,x'),(y,y') \in X \times X'.
\]

Unless otherwise stated, always assume throughout this paper that $(X,\rho)$ is an Hadamard space and $K \subset X$ is nonempty, closed, and convex. Any product space is also to be understood to be an Hadamard space in the sense described above.

\subsection{Dual space and Tangent spaces}

The concept of a dual space of $X$ was introduced in \cite{MR2680038}. Here, we recall such a construction in brief details. Let us write $\Vec{xy} := (x,y) \in X^{2}$. The \emph{quasilinearization} (see \cite{MR1637650,MR2390077}) on $X^{2}$ is the product $\pair{\cdot}{\cdot} : X^{2} \times X^{2} \to \R$ defined by
\[
\pair{\Vec{uv}}{\Vec{xy}} := \cfrac{1}{2}[\rho^{2}(u,y) + \rho^{2}(v,x) - \rho^{2}(u,x) - \rho^{2}(v,y)], \quad \forall \Vec{uv},\Vec{xy} \in X^{2}.
\]
For each $t \in \R$ and $\Vec{uv} \in X^{2}$, we define the Lipschitzian function $\Theta(t;\Vec{uv}) : X \to \R$ by
\[
\Theta(t;\Vec{uv})(x) := t\pair{\Vec{uv}}{\Vec{ux}}, \quad \forall x \in X.
\]
Recall that for a Lipschitzian function $h : X \to \R$, the Lipschitzian constant of $h$ is the quantity $L(h) := \inf\{\frac{\abs{h(x)-h(y)}}{\rho(x,y)} \,|\, x,y \in X,\, x \neq y\}$. Applying the Lipschizian constant to the differences of $\Theta$'s, we can define a pseudometric $\tilde D$ on $\R \times X^{2}$ by:
\[
\tilde{D}\left((s,\Vec{uv}),(t,\Vec{xy})\right) := L(\Theta(s;\Vec{uv}) - \Theta(t;\Vec{xy})), \quad \forall (s,\Vec{uv}),(t,\Vec{xy}) \in \R \times X^{2},
\]
which naturally gives rise to the following equivalence relation
\[
(s,\Vec{uv}) \sim (t,\Vec{xy}) \iff \tilde{D}\left((s,\Vec{uv}),(t,\Vec{xy})\right) = 0, \quad \forall (s,\Vec{uv}),(t,\Vec{xy}) \in \R \times X^{2}.
\]
The quotient space $X^{\ast} := \R\times X^{2}/\sim$ with a metric $D$ given by 
\[
D([(s,\Vec{uv})]_{\sim},[(t,\Vec{xy})]_{\sim}) = \tilde{D}\left((s,\Vec{uv}),(t,\Vec{xy})\right), \quad \forall [(s,\Vec{uv})]_{\sim},[(t,\Vec{xy})]_{\sim} \in X^{\ast},
\]
is called the \emph{dual space} of $X$. For simplicity, we adopt the notation $s\Vec{uv} := [(s,\Vec{uv})]_{\sim}$. Moreover, we write $\0 := s\Vec{uu} = 0\Vec{uv}$ for $s \in \R$ and $u,v \in X$.

In this paper, we restrict to particular subspaces of $X^{\ast}$. For any given $p \in X$, the \emph{tangent space} of $X$ at $p$ is given by $T_{p}X := \{s\Vec{py} \,|\, s \geq 0, \, y \in X\}$.

The following properties of the quasilinearization are fundamental.

\begin{lem}\label{prop:twopair}
For every $u,v,z \in X$, the following properties hold:
\begin{enumerate}[label=(\roman*)]
\item\label{prop:twopair1} $\pair{\Vec{uv}}{\Vec{uz}} + \pair{\Vec{vu}}{\Vec{vz}} = \rho^{2}(u,v) \geq 0$.
\item\label{prop:ontheway} If $\lambda \in [0,1]$ and $x = \gamma_{z,u}(\lambda)$, it holds that $\lambda\pair{\Vec{zu}}{\Vec{zv}} \leq \pair{\Vec{zx}}{\Vec{zv}}$. In addition, if a convex set $K \subset X$ is flat, then the above inequality becomes equality for $u,v,z \in K$.
\item\label{prop:affine} If a convex set $K \subset X$ is flat and $z,v \in K$, then $u \in K \mapsto \pair{\Vec{zu}}{\Vec{zv}}$ is affine on $K$, i.e., it is both convex and concave on $K$.
\end{enumerate}
\end{lem}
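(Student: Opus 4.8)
The plan is to prove each of the three parts in order, since \ref{prop:ontheway} and \ref{prop:affine} will lean on \ref{prop:twopair1} and on the (CN) characterization of $\CAT(0)$ spaces.

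\textbf{Part \ref{prop:twopair1}.} This is a pure computation with the definition of quasilinearization. Writing out $\pair{\Vec{uv}}{\Vec{uz}}$ and $\pair{\Vec{vu}}{\Vec{vz}}$ from the formula and adding, the terms $\pm\rho^{2}(u,z)$ and $\pm\rho^{2}(v,z)$ cancel, leaving $\frac12[2\rho^{2}(u,v)] = \rho^{2}(u,v) \ge 0$. Nothing subtle here; I would just display the two expansions and cancel.

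\textbf{Part \ref{prop:ontheway}.} Here is where (CN) enters. Set $x = \gamma_{z,u}(\lambda)$, so $x = (1-\lambda)z \oplus \lambda u$. I want $\lambda\pair{\Vec{zu}}{\Vec{zv}} \le \pair{\Vec{zx}}{\Vec{zv}}$. Expanding both sides using the definition, the inequality $\lambda\pair{\Vec{zu}}{\Vec{zv}} \le \pair{\Vec{zx}}{\Vec{zv}}$ becomes, after multiplying by $2$,
\[
\lambda[\rho^{2}(z,v) + \rho^{2}(u,z) - \rho^{2}(z,z) - \rho^{2}(u,v)] \le \rho^{2}(z,v) + \rho^{2}(x,z) - \rho^{2}(z,z) - \rho^{2}(x,v),
\]
i.e. (using $\rho(z,z)=0$ and $\rho(x,z) = \lambda\rho(z,u)$ so $\rho^{2}(x,z) = \lambda^{2}\rho^{2}(z,u)$) it reduces to
\[
\rho^{2}(x,v) \le (1-\lambda)\rho^{2}(z,v) + \lambda\rho^{2}(u,v) - \lambda(1-\lambda)\rho^{2}(z,u),
\]
which is exactly (CN) applied with the point $v$ against the geodesic from $z$ to $u$ at parameter $\lambda$. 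So part \ref{prop:ontheway} is (CN) in disguise, and equality when $K$ is flat is immediate from the definition of flatness, provided all points involved lie in $K$ (which they do, since $x$ is on the geodesic $\llbracket z,u\rrbracket \subset K$ by convexity).

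\textbf{Part \ref{prop:affine}.} Fix $z,v \in K$ with $K$ flat and consider $\phi : u \in K \mapsto \pair{\Vec{zu}}{\Vec{zv}}$. To show $\phi$ is affine, take $u_{0},u_{1} \in K$ and $\lambda \in [0,1]$, let $u_\lambda = \gamma_{u_{0},u_{1}}(\lambda)$; I must show $\phi(u_\lambda) = (1-\lambda)\phi(u_{0}) + \lambda\phi(u_{1})$. Convexity ($\phi(u_\lambda) \le (1-\lambda)\phi(u_{0}) + \lambda\phi(u_{1})$) and concavity ($\ge$) should both follow from the flat-equality case of part \ref{prop:ontheway}, but with a twist: part \ref{prop:ontheway} as stated moves the \emph{base} point of the geodesic to $z$, whereas here the geodesic runs between two $u$'s with $z$ fixed. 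I expect the cleanest route is to expand $\phi(u_\lambda) = \frac12[\rho^{2}(z,v) + \rho^{2}(u_\lambda,z) - \rho^{2}(u_\lambda,v)]$ and apply the flat (CN) equality twice — once to $\rho^{2}(u_\lambda,z)$ along $\gamma_{u_{0},u_{1}}$ against $z$, once to $\rho^{2}(u_\lambda,v)$ along $\gamma_{u_{0},u_{1}}$ against $v$ — then collect terms; the $\lambda(1-\lambda)\rho^{2}(u_{0},u_{1})$ contributions from the two applications have opposite signs and cancel, leaving exactly $(1-\lambda)\phi(u_{0}) + \lambda\phi(u_{1})$. The main obstacle is bookkeeping: making sure flatness is invoked only for triples of points genuinely lying in $K$ and that the cancellation of the $\lambda(1-\lambda)$ terms is exact; once the two (CN) equalities are written down, the rest is algebra.
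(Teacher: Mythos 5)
Your proposal is correct and follows essentially the same route as the paper: part (i) by direct expansion and cancellation, part (ii) by reducing the claimed inequality to the \eqref{eqn:CN} inequality applied to $v$ against $\gamma_{z,u}$ (with flatness turning it into an equality), and part (iii) by expanding $\pair{\Vec{zu_\lambda}}{\Vec{zv}}$ and applying the flat \eqref{eqn:CN} equality twice so that the $\lambda(1-\lambda)\rho^{2}(u_{0},u_{1})$ terms cancel. The bookkeeping you flag in part (iii) works out exactly as you predict, so nothing further is needed.
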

\begin{proof}
The proof for \ref{prop:twopair1} is trivial. For \ref{prop:ontheway}, the \eqref{eqn:CN} inequality gives
\begin{align*}
2\pair{\Vec{zx}}{\Vec{zv}} &= \rho^{2}(z,x) + \rho^{2}(z,v) - \rho^{2}(x,v) \\
&= \lambda^{2}\rho^{2}(z,u) + \rho^{2}(z,v) - \rho^{2}(\gamma_{z,u}(\lambda),v) \\
&\geq \lambda^{2}\rho^{2}(z,u) + \rho^{2}(z,v) \\&\quad\quad - \left[(1-\lambda)\rho^{2}(z,v) + \lambda\rho^{2}(u,v) - \lambda(1-\lambda)\rho^{2}(z,u)\right]\\
&= \lambda\rho^{2}(z,u) + \lambda\rho^{2}(z,v) - \lambda\rho^{2}(u,v)\\
&= 2\lambda\pair{\Vec{zu}}{\Vec{zv}},
\end{align*}\normalsize
which proves the first assertion. Now, if a convex set $K \subset X$ is flat, then the \eqref{eqn:CN} inequatily becomes equality for $u,v,z \in K$, and the desired result is straightforward from the above proofline.

Next, let us show \ref{prop:affine}. Let $p,q \in K$ and $\lambda \in [0,1]$. It follows that
\begin{align*}
2\pair{\Vec{z\gamma_{p,q}(\lambda)}}{\Vec{zv}} &= \rho^{2}(z,\gamma_{p,q}(\lambda)) + \rho^{2}(z,v) - \rho^{2}(\gamma_{p,q}(\lambda),v) \\
&= \left[(1-\lambda)\rho^{2}(z,p) + \rho^{2}(z,q) - \lambda(1-\lambda)\rho^{2}(p,q)\right] + \rho^{2}(z,v) \\
&\qquad- \left[(1-\lambda)\rho^{2}(v,p) + \lambda\rho^{2}(v,q) - \lambda(1-\lambda)\rho^{2}(p,q)\right] \\
&= (1-\lambda)\left[\rho^{2}(z,p) + \rho^{2}(z,v) - \rho^{2}(p,v)\right] \\
&\qquad + \lambda\left[\rho^{2}(z,q) + \rho^{2}(z,v) - \rho^{2}(q,v)\right] \\
&= 2\cdot \left[(1-\lambda)\pair{\Vec{zp}}{\Vec{zv}} + \lambda\pair{\Vec{zq}}{\Vec{zv}}\right].
\end{align*}
The proof is thus completed.
\end{proof}

\subsection{Modes of convergence}

Convergence in the metric topology is known to be irrelevant in some situations, especially in the study of numerical algorithms in infinite dimensional spaces. In this subsection, we recall two alternative modes of convergence for bounded sequences, namely the $\Delta$- and $w$-convergences. Both of the concepts are identical to weak convergence in Hilbert spaces.

Let us start with the $\Delta$-convergence. Suppose that $(x^{k}) \subset X$ be a bounded sequence, and define a function $r(\cdot;(x^{k})) : X \to [0,\infty)$ by
\[
r(x;(x^{k})) := \limsup_{k \tendsto \infty} \rho(x,x^{k}), \quad \forall x \in X.
\]
The minimizer of this function is known to exists and is unique (see \cite{MR2232680}). Following \cite{MR2416076} (see also \cite{MR0423139}), a bounded sequence $(x^{k})$ is said to be \emph{$\Delta$-convergent} to a point $\bar{x} \in X$ if $\bar{x} = \argmin_{x \in X} r(x;(u^{k}))$ for any subsequence $(u^{k}) \subset (x^{k})$. In this case, $\bar{x}$ is called the $\Delta$-limit of $(x^{k})$. Recall that a bounded sequence is $\Delta$-convergent to at most one point.

The topology that generates the $\Delta$-convergence is unknown in general. However, we still adopt the topological-like notions such as $\Delta$-accumulation points, $\Delta$-closed sets, or $\Delta$-continuity. For instance, a point $u \in X$ is a \emph{$\Delta$-accumulation point} of the sequence $(x^{k}) \subset X$ if it contains a subsequence that is $\Delta$-convergent to $u$. A set $K \subset X$ is called \emph{$\Delta$-closed} if each $\Delta$-convergent sequence in $K$ has its $\Delta$-limit in $K$. A function $f : X \to \R$ is called \emph{$\Delta$-upper semicontinuous} (briefly \emph{$\Delta$-usc}) if its epigraph is $\Delta$-closed in $X \times \R$.

%For completeness, we provide such definitions here.
%
% In particular, a point $u \in X$ is a \emph{$\Delta$-accumulation point} of $C \subset X$ (resp., of $(x^{k}) \subset X$) if there is a nonconstant sequence $(u^{k}) \subset C$ (resp., a subsequence $(u^{k}) \subset (x^{k})$) which is $\Delta$-convergent to $u$.

The following proposition gives two most important properties regarding the $\Delta$-convergence that are required in our main theorems.
\begin{prop}\label{prop:deltaconvergence}
Suppose that $(x^{k}) \subset X$ is bounded. Then, the following properties hold:
\begin{enumerate}[label=(\roman*)]
\item\label{cdn:deltacompact} (\cite{MR0423139}) $(x^{k})$ has a $\Delta$-convergent subsequence .
\item\label{cdn:deltaproductcharacterization} (\cite{MR3003694}) $(x^{k})$ is $\Delta$-convergent to $\bar{x} \in X$ if and only if $\limsup_{k} \pair{S}{\Vec{\bar{x}x^{k}}} \leq 0$ for any $S \in T_{\bar{x}}X$ .
\end{enumerate}
\end{prop}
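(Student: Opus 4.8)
The plan is to treat the two assertions separately, since \ref{cdn:deltacompact} (a $\Delta$-compactness statement) rests on the theory of asymptotic centers, while \ref{cdn:deltaproductcharacterization} translates $\Delta$-convergence into the quasilinearization, the substantive point there being to convert a tangent-space inequality into a comparison of asymptotic radii. Throughout I would use that every bounded sequence in $X$ has a unique asymptotic center $A((u^{k})):=\argmin_{x\in X}r(x;(u^{k}))$, the fact quoted right after the definition of $r$, which ultimately rests on the uniform convexity expressed by \eqref{eqn:CN}.

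\textbf{Assertion \ref{cdn:deltacompact}.} First I would extract a \emph{regular} subsequence $(u^{k})$ of $(x^{k})$, i.e.\ one for which $r(A((v^{k}));(v^{k}))$ takes the same value $\alpha$ for every subsequence $(v^{k})$ of $(u^{k})$; this is a standard diagonal argument applied to a nested chain of subsequences whose asymptotic radii converge to a common value. Writing $A:=A((u^{k}))$, I then claim $A=A((v^{k}))$ for every subsequence $(v^{k})$ of $(u^{k})$: on one hand $r(A;(v^{k}))\le r(A;(u^{k}))=\alpha$ because passing to a subsequence does not increase the $\limsup$; on the other hand regularity gives $r(A((v^{k}));(v^{k}))=\alpha$, so $A$ realizes the minimal value of $r(\cdot;(v^{k}))$ and hence equals its unique asymptotic center. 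Thus $(u^{k})$ is $\Delta$-convergent to $A$.

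\textbf{Assertion \ref{cdn:deltaproductcharacterization}.} Since $T_{\bar x}X=\{s\Vec{\bar x y}\st s\ge 0,\ y\in X\}$ and $\pair{s\Vec{\bar x y}}{\Vec{\bar x x^{k}}}=s\pair{\Vec{\bar x y}}{\Vec{\bar x x^{k}}}$, the stated condition is equivalent to $\limsup_{k}\pair{\Vec{\bar x y}}{\Vec{\bar x x^{k}}}\le 0$ for every $y\in X$, and expanding the quasilinearization gives the identity $2\pair{\Vec{\bar x y}}{\Vec{\bar x x^{k}}}=\rho^{2}(\bar x,x^{k})+\rho^{2}(\bar x,y)-\rho^{2}(y,x^{k})$. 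For the ``if'' direction, the hypothesis is inherited by every subsequence, so it suffices to show $\bar x$ minimizes $r(\cdot;(x^{k}))$; solving the identity for $\rho^{2}(y,x^{k})$ and taking $\limsup_{k}$ (using $\limsup_{k}\pair{\Vec{\bar x y}}{\Vec{\bar x x^{k}}}\le 0$ and $\rho^{2}(\bar x,y)\ge 0$) gives $r(y;(x^{k}))^{2}\ge r(\bar x;(x^{k}))^{2}$ for all $y$, and uniqueness of the asymptotic center closes this direction. For the ``only if'' direction I would argue by contradiction using Lemma~\ref{prop:twopair}\ref{prop:ontheway}: suppose $(x^{k})$ is $\Delta$-convergent to $\bar x$ but $\limsup_{k}\pair{\Vec{\bar x y}}{\Vec{\bar x x^{k}}}=c>0$ for some $y$ (necessarily $y\ne\bar x$), and pass to a subsequence $(u^{k})$ with $\pair{\Vec{\bar x y}}{\Vec{\bar x u^{k}}}\to c$. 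Setting $z_{t}:=\gamma_{\bar x,y}(t)$ for $t\in(0,1]$, Lemma~\ref{prop:twopair}\ref{prop:ontheway} gives $t\pair{\Vec{\bar x y}}{\Vec{\bar x u^{k}}}\le\pair{\Vec{\bar x z_{t}}}{\Vec{\bar x u^{k}}}$; expanding the right-hand pairing and using $\rho(\bar x,z_{t})=t\rho(\bar x,y)$ yields $\rho^{2}(z_{t},u^{k})\le\rho^{2}(\bar x,u^{k})+t^{2}\rho^{2}(\bar x,y)-2t\pair{\Vec{\bar x y}}{\Vec{\bar x u^{k}}}$, so that taking $\limsup_{k}$ gives $r(z_{t};(u^{k}))^{2}\le r(\bar x;(u^{k}))^{2}+t^{2}\rho^{2}(\bar x,y)-2tc$. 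For $0<t<2c/\rho^{2}(\bar x,y)$ the right side is strictly below $r(\bar x;(u^{k}))^{2}$, contradicting that $\bar x$ is the asymptotic center of the subsequence $(u^{k})$.

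I expect the ``only if'' part of \ref{cdn:deltaproductcharacterization} to be the main obstacle: one must extract a \emph{quantitative} drop of the asymptotic radius from the purely sign-level datum $\limsup_{k}\pair{\Vec{\bar x y}}{\Vec{\bar x x^{k}}}>0$, and it is the geodesic $\gamma_{\bar x,y}$ together with the \eqref{eqn:CN}-based inequality of Lemma~\ref{prop:twopair}\ref{prop:ontheway} — plus taking the parameter $t$ small enough that the first-order gain $2tc$ beats the second-order loss $t^{2}\rho^{2}(\bar x,y)$ — that makes it work. A lesser technical point is the diagonal extraction of the regular subsequence in \ref{cdn:deltacompact}.
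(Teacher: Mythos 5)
Your proof is correct. The paper offers no argument for this proposition at all --- both parts are quoted from the literature, part (i) from Lim and part (ii) from Ahmadi Kakavandi --- so there is no in-paper proof to compare against; your regular-subsequence extraction for (i), and for (ii) the expansion of the quasilinearization into asymptotic radii together with the small-$t$ perturbation along $\gamma_{\bar x,y}$ via Lemma \ref{prop:twopair}\ref{prop:ontheway} to turn a positive $\limsup$ of the pairing into a strict drop of the asymptotic radius, are precisely the standard arguments found in those references.
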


Next, let us turn to the notion of $w$-convergence as introduced in \cite{MR3003694}. A bounded sequence $(x^{k}) \subset X$ is said to be \emph{$w$-convergence} to a point $\bar{x} \in X$ if $\lim_{k} \pair{S}{\Vec{\bar{x}x^{k}}} = 0$ for any $S \in T_{\bar{x}}X$. With Proposition \ref{prop:deltaconvergence}, we can see immediately that $w$-convergence implies $\Delta$-convergence. As was noted in \cite{MR3003694,MR2831141}, a bounded sequence does not necessarily have a $w$-convergent subsequence. This motivates the definition of reflexivity in Hadamard spaces, i.e., a convex set $K \subset X$ is said to be \emph{reflexive} if each bounded sequence in $K$ contains a $w$-convergent subsequence. However, it turns out that the reflexivity of $K$ implies the equivalence between the $\Delta$- and weak convergences. Still, the reflexivity can be useful in obtaining sharper estimates in some situations (see e.g. the proof of Lemma \ref{lem:fB-skewedUSC}).

\begin{prop}[\cite{MR3241330}]\label{lem:closed+convex=Deltaclosed}
A convex set $K \subset X$ is closed if and only if it is $\Delta$-closed.
\end{prop}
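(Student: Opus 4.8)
The plan is to prove the two implications separately, since one direction is essentially formal and the other requires the metric projection onto closed convex sets in Hadamard spaces.

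\emph{($\Delta$-closed $\Rightarrow$ closed).} This direction is the easy one. Suppose $K$ is $\Delta$-closed and let $(x^k) \subset K$ with $x^k \to x$ in the metric $\rho$. A metrically convergent sequence is bounded, and it is a standard fact that metric convergence implies $\Delta$-convergence: indeed, for any subsequence $(u^k)$ of $(x^k)$ we still have $u^k \to x$, so $r(y;(u^k)) = \rho(y,x)$ for every $y \in X$, whose unique minimizer is $y = x$; hence $x = \argmin_{y} r(y;(u^k))$, i.e.\ $(x^k)$ is $\Delta$-convergent to $x$. Since $K$ is $\Delta$-closed, $x \in K$, so $K$ is closed.

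\emph{(closed $\Rightarrow$ $\Delta$-closed).} This is the substantive direction. Let $K$ be closed and convex, let $(x^k) \subset K$ be $\Delta$-convergent to $\bar{x} \in X$, and suppose for contradiction that $\bar{x} \notin K$. Because $K$ is nonempty, closed, and convex in a Hadamard (hence complete $\CAT(0)$) space, the metric projection $P_K$ onto $K$ is well defined; write $p := P_K(\bar{x}) \in K$, and note $\rho(\bar{x},p) > 0$. The key property of the projection I will invoke is the variational characterization: for every $y \in K$,
\[
\pair{\Vec{p\bar{x}}}{\Vec{py}} \leq 0,
\]
equivalently $\pair{\Vec{\bar{x}p}}{\Vec{py}} \geq 0$ for all $y \in K$. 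Now consider the tangent vector $S := \Vec{\bar{x}p} \in T_{\bar{x}}X$. By Proposition \ref{prop:deltaconvergence}\ref{cdn:deltaproductcharacterization}, $\Delta$-convergence of $(x^k)$ to $\bar{x}$ gives $\limsup_k \pair{\Vec{\bar{x}p}}{\Vec{\bar{x}x^k}} \leq 0$. On the other hand, using the additivity-type identity for quasilinearization along the segment, or more directly the law-of-cosines computation, one has
\[
\pair{\Vec{\bar{x}p}}{\Vec{\bar{x}x^k}} = \pair{\Vec{\bar{x}p}}{\Vec{\bar{x}p}} + \pair{\Vec{\bar{x}p}}{\Vec{px^k}} = \rho^2(\bar{x},p) + \pair{\Vec{\bar{x}p}}{\Vec{px^k}} \geq \rho^2(\bar{x},p),
\]
where the last inequality is exactly the projection characterization applied with $y = x^k \in K$. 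Taking $\limsup_k$ yields $0 \geq \limsup_k \pair{\Vec{\bar{x}p}}{\Vec{\bar{x}x^k}} \geq \rho^2(\bar{x},p) > 0$, a contradiction. Hence $\bar{x} \in K$, and $K$ is $\Delta$-closed.

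\emph{Main obstacle.} The delicate point is the identity $\pair{\Vec{\bar{x}p}}{\Vec{\bar{x}x^k}} = \rho^2(\bar{x},p) + \pair{\Vec{\bar{x}p}}{\Vec{px^k}}$, which is \emph{not} bilinearity (quasilinearization is not bilinear in a general $\CAT(0)$ space) but rather a genuine $\CAT(0)$ estimate; one must verify it—or rather the inequality $\geq$—carefully from the definition of $\pair{\cdot}{\cdot}$ together with the (CN) inequality and the projection property, essentially repeating the computation in Lemma \ref{prop:twopair}\ref{prop:ontheway}. A cleaner route is to quote the standard obtuse-angle characterization of $P_K$ in $\CAT(0)$ spaces directly in the form $\rho^2(\bar{x},x^k) \geq \rho^2(\bar{x},p) + \rho^2(p,x^k)$ for all $k$, feed it into the definition of $\pair{\Vec{\bar{x}p}}{\Vec{\bar{x}x^k}}$, and conclude as above. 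Everything else is bookkeeping.
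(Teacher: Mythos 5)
Your proof is correct. Note first that the paper offers no proof of this proposition at all --- it is quoted directly from \cite{MR3241330} --- so the only meaningful comparison is with the standard argument there. Your easy direction is fine (and does not even use convexity). In the substantive direction, both of your key ingredients check out, and the step you flag as the ``main obstacle'' is actually not one: the identity $\pair{\Vec{\bar{x}p}}{\Vec{\bar{x}x^{k}}} = \pair{\Vec{\bar{x}p}}{\Vec{\bar{x}p}} + \pair{\Vec{\bar{x}p}}{\Vec{px^{k}}}$ is an exact, purely formal consequence of the definition of the quasilinearization (additivity in the second slot: $\pair{\Vec{uv}}{\Vec{xz}} + \pair{\Vec{uv}}{\Vec{zy}} = \pair{\Vec{uv}}{\Vec{xy}}$, since the terms $\rho^{2}(u,z)$ and $\rho^{2}(v,z)$ cancel), so no $\CAT(0)$ estimate is needed there. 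The one genuinely external ingredient is the obtuse-angle characterization of the metric projection, $\rho^{2}(\bar{x},y) \geq \rho^{2}(\bar{x},p) + \rho^{2}(p,y)$ for $y \in K$, which is standard and is where closedness, convexity and completeness of the ambient space enter. The proof in \cite{MR3241330} is a close cousin of yours but shorter: it shows directly that the asymptotic center $c$ of a bounded sequence contained in a closed convex set $C$ lies in $C$, by setting $p = P_{C}(c)$ and feeding the same projection inequality into $r(\cdot;(x^{k}))$ to get $r^{2}(c) \geq \rho^{2}(c,p) + r^{2}(p)$, forcing $\rho(c,p) = 0$; this works straight from the definition of the $\Delta$-limit as asymptotic center and does not need the characterization in Proposition \ref{prop:deltaconvergence}\ref{cdn:deltaproductcharacterization}. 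Your detour through that characterization costs nothing but also buys nothing; both arguments hinge on exactly the same projection inequality.
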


\begin{prop}\label{prop:delta-iff-weak}
Suppose that a convex set $K \subset X$ is reflexive and a bounded sequence $(x^{k}) \subset K$ is $\Delta$-convergent. Then, it is $w$-convergent.
\end{prop}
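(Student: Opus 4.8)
The plan is to run a routine subsequence-extraction argument, pivoting on two facts recorded earlier in this section: the $\Delta$-limit of a bounded sequence is unique, and $w$-convergence implies $\Delta$-convergence (the latter via Proposition \ref{prop:deltaconvergence}\ref{cdn:deltaproductcharacterization}).

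First I would denote by $\bar{x}$ the $\Delta$-limit of $(x^{k})$ and record the elementary observation that every subsequence of $(x^{k})$ is again $\Delta$-convergent to $\bar{x}$; this is immediate from the definition of $\Delta$-convergence, which already quantifies over all subsequences. From this I would extract the crucial claim: \emph{any} $w$-convergent subsequence of $(x^{k})$ has $w$-limit $\bar{x}$. Indeed, if a subsequence $(u^{j})$ of $(x^{k})$ is $w$-convergent to some $y$, then $(u^{j})$ is $\Delta$-convergent to $y$; since $(u^{j})$ is also $\Delta$-convergent to $\bar{x}$ by the preceding observation, uniqueness of the $\Delta$-limit forces $y = \bar{x}$.

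Next I would argue by contradiction. If $(x^{k})$ fails to be $w$-convergent to $\bar{x}$, then, unwinding the definition, there exist $S \in T_{\bar{x}}X$, a number $\varepsilon > 0$, and a subsequence $(x^{k_{j}})$ with $\abs{\pair{S}{\Vec{\bar{x}x^{k_{j}}}}} \geq \varepsilon$ for all $j$. Being bounded and contained in the reflexive set $K$, this subsequence admits in turn a $w$-convergent subsequence; by the claim that subsequence $w$-converges to $\bar{x}$, so $\pair{S}{\Vec{\bar{x}x^{k_{j_{i}}}}} \to 0$ along it, contradicting the lower bound $\varepsilon$. Hence $(x^{k})$ is $w$-convergent, necessarily to $\bar{x}$.

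I do not expect any genuine obstacle here: each step invokes only a definition or a property explicitly stated in this section (uniqueness of the $\Delta$-limit, the implication $w\Rightarrow\Delta$, and reflexivity of $K$). The sole point to handle with care is that the $w$-convergent subsequence produced by reflexivity in the last step is still a subsequence of the original $(x^{k})$, so that the claim applies to it and pins its $w$-limit to $\bar{x}$ rather than to some unrelated point.
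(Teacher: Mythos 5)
Your proof is correct and follows essentially the same route as the paper's: argue by contradiction, extract a subsequence along which the pairing with some fixed element of $T_{\bar{x}}X$ stays away from zero, invoke the reflexivity of $K$ to produce a $w$-convergent sub-subsequence, and identify its $w$-limit as $\bar{x}$ via uniqueness of the $\Delta$-limit to reach a contradiction. If anything, your $\varepsilon$-formulation of the failure of $w$-convergence is slightly more careful than the paper's, which phrases the negation as $\liminf < \limsup \leq 0$ and thereby overlooks the (equally harmless) case where the limit exists but is negative.
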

\begin{proof}
Suppose that $(x^{k})$ is $\Delta$-convergent to $\bar{x} \in X$. Let us assume to the contrary that $(x^{k})$ is not weakly convergent. Equivalently, there must exists $\gamma_{0} \in T_{\bar{x}}X$ such that
\begin{equation}\label{eqn:liminf<limsup}
\liminf_{k \tendsto \infty} \pair{\Vec{\bar{x}x^{k}}}{\gamma_{0}} < \limsup_{k \tendsto \infty} \pair{\Vec{\bar{x}x^{k}}}{\gamma_{0}} \leq 0.
\end{equation}
Suppose that $(x^{k_{j}}) \subset (x^{k})$ is a subsequence such that
\[
\lim_{j \tendsto \infty} \pair{\Vec{\bar{x}x^{k_{j}}}}{\gamma_{0}} = \liminf_{k \tendsto \infty} \pair{\Vec{\bar{x}x^{k}}}{\gamma_{0}}.
\]
Take into account the inequality \eqref{eqn:liminf<limsup}, we get $\lim_{j} \pair{\Vec{\bar{x}x^{k_{j}}}}{\gamma_{0}} < 0$, which prevents $(x^{k})$ from having a weakly convergent subsequence. Since $(x^{k})$ is bounded in $K$, this violates the reflexivity of $K$. Therefore, $(x^{k})$ must be weakly convergent to $\bar{x}$.
\end{proof}

\subsubsection{Fej\'er convergence}

Lastly, the notion of Fej\'er convergence is essential and will play a central role in the proof of our main convergence theorems. This notion encapsulates the improvement at each iteration of some approximate sequence towards a solution set.

\begin{dfn}
A sequence $(x^{k}) \subset X$ is said to be \emph{Fej\'er convergent} with respect to a nonempty set $V \subset X$ if for each $x \in V$, we have $\rho(x^{k+1},x) \leq \rho(x^{k},x)$ for all large $k \in \N$.
\end{dfn}

\begin{prop}[\cite{MR3691338}]\label{prop:Fejer}
Suppose that $(x^{k}) \subset X$ is Fej\'er convergent to a nonempty set $V \subset X$. Then, the following are true:
\begin{enumerate}[label=(\roman*)]
\item $(x^{k})$ is bounded.
\item $(\rho(x,x^{k}))$ converges for any $x \in V$.
\item If every $\Delta$-accumulation point lies within $V$, then $(x^{k})$ is $\Delta$-convergent to an element in $V$.
\end{enumerate}
\end{prop}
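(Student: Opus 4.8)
The plan is to obtain (i) and (ii) immediately from the definition of Fej\'er convergence and to concentrate on (iii), which I would split into three sub-steps: (a) existence of a $\Delta$-accumulation point, (b) uniqueness of that point, and (c) upgrading uniqueness to full $\Delta$-convergence. For (i), fix any $x \in V$ (possible as $V \neq \emptyset$) and pick $N \in \N$ with $\rho(x^{k+1},x) \le \rho(x^k,x)$ for all $k \ge N$; then $\rho(x^k,x) \le \rho(x^N,x)$ for every $k \ge N$, so $(x^k)$ is bounded. For (ii), for that same $x$ the tail $(\rho(x^k,x))_{k \ge N}$ is nonincreasing and bounded below, hence convergent; as $x \in V$ was arbitrary, $(\rho(x,x^k))$ converges for every $x \in V$, and I set $a_x := \lim_k \rho(x,x^k)$.

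For (iii)(a)--(b): boundedness and Proposition \ref{prop:deltaconvergence}\ref{cdn:deltacompact} give a $\Delta$-convergent subsequence, so a $\Delta$-accumulation point exists and, by hypothesis, lies in $V$. For uniqueness, let $u,v \in V$ be two $\Delta$-accumulation points and choose subsequences $(x^{k_j})$, $(x^{l_j})$ that are $\Delta$-convergent to $v$ and to $u$, respectively. I would invoke Proposition \ref{prop:deltaconvergence}\ref{cdn:deltaproductcharacterization}: applied to $(x^{k_j})$ with the tangent vector $\Vec{vu} \in T_vX$ it yields $\limsup_j \pair{\Vec{vu}}{\Vec{vx^{k_j}}} \le 0$, and applied to $(x^{l_j})$ with $\Vec{uv} \in T_uX$ it yields $\limsup_j \pair{\Vec{uv}}{\Vec{ux^{l_j}}} \le 0$. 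Using the quasilinearization identity $\pair{\Vec{vu}}{\Vec{vz}} = \tfrac12\big(\rho^2(v,z)+\rho^2(u,v)-\rho^2(u,z)\big)$ and replacing, via (ii), the limits of $\rho^2(v,x^{k_j})$ and $\rho^2(u,x^{k_j})$ by $a_v^2$ and $a_u^2$, the two inequalities become $a_v^2 - a_u^2 + \rho^2(u,v) \le 0$ and $a_u^2 - a_v^2 + \rho^2(u,v) \le 0$; summing forces $\rho^2(u,v) \le 0$, so $u = v =: \bar x \in V$.

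For (iii)(c), I would use (ii) a second time to turn ``$\bar x$ is the only $\Delta$-accumulation point'' into ``$(x^k)$ is $\Delta$-convergent to $\bar x$'', a step that is not automatic for general bounded sequences. Since $\rho(\bar x,x^k) \to a_{\bar x}$, every subsequence $(u^k) \subset (x^k)$ has $r(\bar x;(u^k)) = \limsup_k \rho(\bar x,u^k) = a_{\bar x}$. Given such a $(u^k)$, Proposition \ref{prop:deltaconvergence}\ref{cdn:deltacompact} provides a further subsequence $(u^{k_j})$ that is $\Delta$-convergent, and its $\Delta$-limit, being a $\Delta$-accumulation point of $(x^k)$, equals $\bar x$; hence $\bar x = \argmin_{z \in X} r(z;(u^{k_j}))$, so $r(z;(u^{k_j})) \ge a_{\bar x}$ for all $z \in X$. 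Because a $\limsup$ along a subsequence cannot exceed the one along the sequence, $r(z;(u^k)) \ge r(z;(u^{k_j})) \ge a_{\bar x} = r(\bar x;(u^k))$ for every $z$, so $\bar x$ minimizes $r(\cdot;(u^k))$; by uniqueness of the asymptotic center, $\argmin_{z \in X} r(z;(u^k)) = \bar x$. As $(u^k)$ was an arbitrary subsequence, $(x^k)$ is $\Delta$-convergent to $\bar x \in V$.

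Parts (i)--(ii) are routine. The first genuine point is the uniqueness in (iii)(b), where one only needs the quasilinearization identity together with the fact that (ii) supplies honest limits of $\rho(u,x^k)$ and $\rho(v,x^k)$ rather than mere $\limsup$'s. The main obstacle, I expect, is (iii)(c): ``a unique $\Delta$-accumulation point implies $\Delta$-convergence'' is false for general bounded sequences, since the asymptotic centers of different subsequences may disagree, and the argument goes through only because Fej\'er monotonicity, via (ii), pins $r(\bar x;\cdot)$ to the single value $a_{\bar x}$ on every subsequence.
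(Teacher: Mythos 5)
Your proof is correct and complete. The paper itself gives no proof of this proposition (it is quoted from \cite{MR3691338}), and your argument is essentially the standard one for Fej\'er-monotone sequences: (i)--(ii) from monotonicity of the tails, existence of a $\Delta$-cluster point from Proposition \ref{prop:deltaconvergence}\ref{cdn:deltacompact}, and the subsequence argument pinning every asymptotic radius at $a_{\bar x}$ to upgrade uniqueness to $\Delta$-convergence. Your uniqueness step via the quasilinearization identity and Proposition \ref{prop:deltaconvergence}\ref{cdn:deltaproductcharacterization} is a mild variant of the usual route (which compares $r(u;\cdot)$ and $r(v;\cdot)$ directly and invokes strict uniqueness of asymptotic centers), and it correctly exploits that both cluster points lie in $V$ so that (ii) furnishes genuine limits; it even yields the quantitative conclusion $2\rho^{2}(u,v)\leq 0$.
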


\section{The KKM Principle}\label{sec:KKM}

The KKM principle was initiated in \cite{zbMATH02573692}, and was successfully extended into topological vector spaces by Fan in \cite{MR0131268}. Further extensions into Hadamard manifold was discussed in \cite{MR2869729,MR3338819}. The results can be generalized instantly also into Hadamard spaces with fixed point property for continuous mappings defined on a convex hull of finite points \cite{MR2561730}. Here we prove the KKM principle without using such condition on the space. First, let us recall the original statement of \cite{zbMATH02573692} and another additional result.

\begin{lem}[\cite{zbMATH02573692}]
Let $C_{1},\dots,C_{m}$ be closed subsets of the standard $(m-1)$-simplex $\sigma$. If $\co(\{x_{i} \;;\; i \in I\}) \subset \bigcup_{i \in I} C_{i}$ for each $I \subset \{1,\dots,m\}$, then the intersection $\bigcap_{j=1}^{m} C_{j}$ is nonempty.
\end{lem}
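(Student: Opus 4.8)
The plan is to reduce the statement to Sperner's combinatorial lemma. Recall Sperner's lemma: if $\mathcal{T}$ is a triangulation of the standard $(m-1)$-simplex $\sigma$ (with vertices $x_1,\dots,x_m$), and $\ell$ is a labeling of the vertices of $\mathcal{T}$ by $\{1,\dots,m\}$ such that every vertex $v$ lying in the face $\co\{x_i : i \in J\}$ receives a label $\ell(v)\in J$, then some simplex of $\mathcal{T}$ carries all $m$ labels on its vertices. I would take this as known (or prove it by the standard double-counting argument on $(m-2)$-faces).

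First I would manufacture a suitable labeling out of the given sets. For $v\in\sigma$ write $v=\sum_{i=1}^m \lambda_i(v)\,x_i$ in barycentric coordinates and put $I(v):=\{i : \lambda_i(v)>0\}$. Then $v\in\co\{x_i : i\in I(v)\}\subset\bigcup_{i\in I(v)}C_i$ by hypothesis, so $\{i\in I(v) : v\in C_i\}\neq\emptyset$; let $\ell(v)$ be any element of it. This $\ell$ automatically satisfies Sperner's boundary condition: if $v$ lies in the face $\co\{x_i : i\in J\}$ then $I(v)\subseteq J$, hence $\ell(v)\in J$; in particular $\ell(x_i)=i$.

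Next I would fix a sequence of triangulations $(\mathcal{T}_n)$ of $\sigma$ with $\operatorname{mesh}(\mathcal{T}_n)\to 0$, apply the recipe above together with Sperner's lemma to each, and obtain for every $n$ a simplex $\Delta_n\in\mathcal{T}_n$ with vertices $v_1^n,\dots,v_m^n$ satisfying $\ell(v_i^n)=i$, whence $v_i^n\in C_i$ for each $i$. By compactness of $\sigma$, after passing to a subsequence $v_1^n\to x^\ast$ for some $x^\ast\in\sigma$, and since $\diam\Delta_n\to 0$ we get $v_i^n\to x^\ast$ for all $i$. Because $C_i$ is closed and $v_i^n\in C_i$, we conclude $x^\ast\in C_i$ for every $i$, i.e. $x^\ast\in\bigcap_{j=1}^m C_j$.

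The substantive ingredient is the combinatorial core (Sperner's lemma); the labeling construction and the compactness-plus-closedness passage to the limit are routine. If one prefers to bypass Sperner entirely, an equivalent route is via Brouwer's fixed point theorem: assuming $\bigcap_j C_j=\emptyset$, the map $g(x):=\sum_{i=1}^m \frac{\dist(x,C_i)}{\sum_{j=1}^m\dist(x,C_j)}\,x_i$ is a well-defined continuous self-map of $\sigma$, a fixed point $x^\ast=g(x^\ast)$ forces $\lambda_i(x^\ast)>0 \iff x^\ast\notin C_i$, and then $x^\ast\in\co\{x_i : i\in I(x^\ast)\}\subset\bigcup_{i\in I(x^\ast)}C_i$ yields the contradiction $x^\ast\in C_i$ for some $i\in I(x^\ast)$. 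Either way, no hypothesis on an ambient space is needed here, since everything takes place inside the Euclidean simplex $\sigma$.
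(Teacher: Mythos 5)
Your proof is correct. Note, however, that the paper does not prove this lemma at all --- it is quoted as a known result from the original 1929 Knaster--Kuratowski--Mazurkiewicz reference --- so there is no in-paper argument to compare against. Your Sperner-based argument (label each triangulation vertex $v$ by some $i$ with $\lambda_i(v)>0$ and $v\in C_i$, verify the boundary condition from $I(v)\subseteq J$, extract a completely labelled cell from each triangulation, let the mesh shrink, and pass to the limit using compactness of $\sigma$ and closedness of the $C_i$) is precisely the classical proof, and the Brouwer-based alternative you sketch is the other standard route; both are sound and complete as written.
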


\begin{lem}
Suppose that $(x^{k})$ is a sequence in $X$ and define the following sequence of sets by induction:
\begin{equation}\label{eqn:Dj}
\left\{
\begin{array}{l}
D_{1} := \{x_{1}\}, \medskip\\
D_{j} := \{z \in \llbracket x^{j},y \rrbracket \;;\; y \in D_{j-1}\}, \quad\text{for $j = 2,3,\dots$}
\end{array}
\right.
\end{equation}
Then, $D_{j}$ is compact for all $j \in \N$.
\end{lem}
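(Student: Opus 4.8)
The claim is that each $D_j$ defined by the recursion \eqref{eqn:Dj} is compact. The natural approach is induction on $j$. The base case $D_1 = \{x_1\}$ is a singleton, hence trivially compact. For the inductive step, assume $D_{j-1}$ is compact and consider
\[
D_j = \bigcup_{y \in D_{j-1}} \llbracket x^j, y \rrbracket = \{\gamma_{x^j,y}(t) \st y \in D_{j-1},\, t \in [0,1]\}.
\]
The plan is to exhibit $D_j$ as the image of the compact set $D_{j-1} \times [0,1]$ under a continuous map. Define $\Phi : D_{j-1} \times [0,1] \to X$ by $\Phi(y,t) := \gamma_{x^j,y}(t) = (1-t) x^j \oplus t y$. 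Then $D_j = \Phi(D_{j-1} \times [0,1])$, and since $D_{j-1} \times [0,1]$ is compact (product of compacts, by the inductive hypothesis), it suffices to show $\Phi$ is continuous.

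The one genuine point to verify is the joint continuity of the geodesic map $\Phi$. In a Hadamard space (indeed any uniquely geodesic space), the map $(y,t) \mapsto (1-t)x^j \oplus t y$ is continuous; the key estimate, which follows from the convexity of the metric in $\CAT(0)$ spaces (a consequence of \eqref{eqn:CN}), is
\[
\rho\big((1-t)x^j \oplus ty,\ (1-s)x^j \oplus sz\big) \leq |t-s|\,\rho(x^j, z) + (1-\min\{s,t\})\,\rho(y,z),
\]
obtained by interpolating through an intermediate point such as $(1-t)x^j \oplus tz$ and using that $\rho(\gamma_{x^j,z}(t),\gamma_{x^j,z}(s)) = |t-s|\rho(x^j,z)$ together with the convexity inequality $\rho(\gamma_{x^j,y}(t),\gamma_{x^j,z}(t)) \leq t\,\rho(y,z)$. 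This shows $\Phi$ is (locally Lipschitz, hence) continuous on $D_{j-1} \times [0,1]$.

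Since the continuous image of a compact set is compact, $D_j = \Phi(D_{j-1}\times[0,1])$ is compact, completing the induction. I do not anticipate a serious obstacle here; the only thing requiring care is stating the joint-continuity estimate for $\Phi$ correctly, and this is standard for $\CAT(0)$ spaces. One could alternatively argue sequentially: given $(y_n, t_n) \to (y,t)$ in $D_{j-1} \times [0,1]$, the above estimate gives $\Phi(y_n,t_n) \to \Phi(y,t)$, so $D_j$ is sequentially compact, which in a metric space is equivalent to compactness.
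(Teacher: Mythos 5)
Your argument is essentially the paper's: the paper proves sequential compactness of $D_{j+1}$ by extracting convergent subsequences from $D_j\times[0,1]$ and using exactly the estimate $\rho(\gamma_{x^{j+1},v^{k}}(t_{k}),\gamma_{x^{j+1},\bar v}(\bar t))\le t_{k}\,\rho(v^{k},\bar v)+\abs{t_{k}-\bar t}\,\rho(x^{j+1},\bar v)$, which is the sequential form of your continuous-image argument. One small slip: the coefficient $(1-\min\{s,t\})$ in your displayed inequality should be $t$ (or simply $1$), since convexity of the metric gives $\rho(\gamma_{x^j,y}(t),\gamma_{x^j,z}(t))\le t\,\rho(y,z)$; this does not affect the continuity conclusion.
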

\begin{proof}
We shall prove the statement by using mathematical induction. It is obvious that $D_{1}$ is compact. Now, we enter the inductive step by assuming $D_{j}$ is compact and show that $D_{j+1}$ must be compact.

Suppose that $(u^{k})$ is an arbitrary sequence in $D_{j+1}$. By definition, there correspond sequences $(v^{k})$ in $D_{j}$ and $(t_{k})$ in $[0,1]$ such that
\[
u^{k} = \gamma_{x^{j+1},v^{k}}(t_{k}), \quad \forall k \in \N.
\]
Since both $D_{j}$ and $[0,1]$ are compact, we may find convergent subsequences $(v^{k_{i}})$ of $(v^{k})$ and $(t_{k_{i}})$ of $(t_{k})$ with limits $\bar{v} \in D_{j}$ and $\bar{t} \in [0,1]$, respectively. Set $\bar{u} := \gamma_{x^{j+1},\bar{v}}(\bar{t})$, we now show that $(u^{k_{i}})$ is in fact convergent to $\bar{u}$.

Observe that
\begin{align*}
\rho(u^{k_{i}},\bar{u}) &= \rho(\gamma_{x^{j+1},v^{k_{i}}}(t_{k_{i}}),\gamma_{x^{j+1},\bar{v}}(\bar{t})) \\
&\leq \rho(\gamma_{x^{j+1},v^{k_{i}}}(t_{k_{i}}),\gamma_{x^{j+1},\bar{v}}(t_{k_{i}})) + \rho(\gamma_{x^{j+1},\bar{v}}(t_{k_{i}}),\gamma_{x^{j+1},\bar{v}}(\bar{t})) \\
&\leq t_{k_{i}} \rho(v^{k_{i}},\bar{v}) + \abs{t_{k_{i}} - \bar{t}} \rho(x^{j+1},\bar{v}).
\end{align*}
Passing $i \tendsto \infty$, we obtain from the above inequalities that $u^{k_{i}} \tendsto \bar{u}$. This guaratees the compactness of $D_{j+1}$, and the desired conclusion is thus proved.
\end{proof}

\begin{thm}[The KKM Principle]\label{lem:KKM}
Let $K \subset X$ be a closed convex set, $G : K \multimap K$ be a set-valued mapping with closed values. Suppose that for any finite subset $D := \{x_{1},\dots,x_{m}\} \subset K$, it holds the following inclusion:
\begin{equation}\label{eqn:KKMcondition}
\co(D) \subset G(D).
\end{equation}
Then, the family $\{G(x)\}_{x \in K}$ has the finite intersection property. Moreover, if $G(x_{0})$ is compact for some $x_{0} \in K$, then $\bigcap_{x \in K} G(x) \neq \emptyset$.
\end{thm}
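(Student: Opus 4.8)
The plan is to reduce the statement to the classical KKM lemma on the standard simplex. The only obstruction to the usual Euclidean argument is that $\co(D)$ may fail to be compact; this is circumvented by replacing $\co(D)$ with the compact set $D_{m}$ furnished by the preceding lemma, which still contains $D$ and lies inside $\co(D)$.

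Fix a finite set $D=\{x_{1},\dots,x_{m}\}\subset K$ and build $D_{1}\subset\cdots\subset D_{m}$ as in \eqref{eqn:Dj} (reading the $x^{j}$ there as $x_{j}$). By the previous lemma every $D_{j}$ is compact; by construction $x_{j}\in D_{j}\subset D_{m}$ for each $j$, and $D_{m}\subset\co(D)\subset K$. Next I would introduce the \emph{iterated geodesic barycentre} $\phi:\sigma\to D_{m}$, where $\sigma$ is the standard $(m-1)$-simplex with vertices $e_{1},\dots,e_{m}$: for $\lambda=(\lambda_{1},\dots,\lambda_{m})\in\sigma$ put $y_{1}:=x_{1}$ and, recursively, $y_{j}:=\gamma_{x_{j},y_{j-1}}\!\big(\tfrac{\lambda_{1}+\cdots+\lambda_{j-1}}{\lambda_{1}+\cdots+\lambda_{j}}\big)$ whenever $\lambda_{1}+\cdots+\lambda_{j}>0$, and finally $\phi(\lambda):=y_{m}$. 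Two facts are needed: (a) $\phi$ is well defined and continuous on all of $\sigma$; and (b) $\phi(e_{i})=x_{i}$ for every $i$ and, more generally, $\phi$ maps the face $\sigma_{I}:=\co(\{e_{i}:i\in I\})$ into $\co(\{x_{i}:i\in I\})$ for each $I\subset\{1,\dots,m\}$. Fact (b) is immediate by induction on the recursion: on $\sigma_{I}$ only geodesics with endpoints among $\{x_{i}:i\in I\}$ are ever formed, and at the first index where a coordinate becomes positive the recursion collapses to the corresponding $x_{i}$.

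Granting these, set $C_{i}:=\phi^{-1}(G(x_{i}))$ for $i=1,\dots,m$. Each $C_{i}$ is closed in $\sigma$ since $G(x_{i})$ is closed and $\phi$ is continuous. For every $I\subset\{1,\dots,m\}$ and $\lambda\in\sigma_{I}$, fact (b) together with hypothesis \eqref{eqn:KKMcondition} gives $\phi(\lambda)\in\co(\{x_{i}:i\in I\})\subset\bigcup_{i\in I}G(x_{i})$, so $\lambda\in\bigcup_{i\in I}C_{i}$; that is, $\sigma_{I}\subset\bigcup_{i\in I}C_{i}$. The classical KKM lemma then produces $\lambda^{\ast}\in\bigcap_{i=1}^{m}C_{i}$, whence $\phi(\lambda^{\ast})\in\bigcap_{i=1}^{m}G(x_{i})$. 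This establishes the finite intersection property of $\{G(x)\}_{x\in K}$. For the last assertion, suppose $G(x_{0})$ is compact. Then $\{G(x)\cap G(x_{0})\}_{x\in K}$ is a family of closed subsets of the compact space $G(x_{0})$, and by the part just proved (applied to $\{x_{0},x_{1},\dots,x_{m}\}$) every finite subfamily has nonempty intersection; hence $\bigcap_{x\in K}\big(G(x)\cap G(x_{0})\big)=\bigcap_{x\in K}G(x)\neq\emptyset$.

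I expect fact (a) to be the one genuine obstacle. Away from the boundary faces of $\sigma$ the recursion is a composition of geodesic interpolations, so continuity is clear; but on the faces $\{\lambda_{1}=\cdots=\lambda_{j}=0\}$ the ratio defining $y_{j}$ is a $0/0$ indeterminacy, and one must check that $\phi(\lambda)$ extends continuously there independently of the direction of approach. This should follow from a telescoping estimate in the spirit of the previous lemma, using $\rho(\gamma_{x,u}(t),\gamma_{x,v}(s))\le t\,\rho(u,v)+\abs{t-s}\,\rho(x,v)$ to bound the influence of the (undetermined) early iterates $y_{j}$ by a multiple of $\diam(D_{m})$ times the weight $\tfrac{\lambda_{1}+\cdots+\lambda_{j}}{\lambda_{1}+\cdots+\lambda_{j+1}}$, which vanishes as $\lambda$ approaches the face. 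All remaining steps are either the cited classical KKM lemma or routine point-set topology.
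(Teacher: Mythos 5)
Your proposal is correct and follows essentially the same route as the paper: both reduce the claim to the classical KKM lemma by mapping the standard simplex continuously into the compact set $D_{m}$ via iterated geodesic interpolation (your barycentric ratios $\tfrac{\lambda_{1}+\cdots+\lambda_{j-1}}{\lambda_{1}+\cdots+\lambda_{j}}$ are exactly the paper's coordinates $s_{j-1}$), then pull back the sets $G(x_{i})$ and finish with the finite-intersection-property argument in $G(x_{0})$. The continuity issue you flag at the degenerate faces is genuine but resolves as you sketch---working from the largest index $J$ with $\lambda_{1}+\cdots+\lambda_{J}=0$, the weight on the ill-defined early iterates tends to $0$ and they are washed out by a $\diam(D_{m})$ bound---and the paper's own Lipschitz estimate in the $s$-coordinates quietly passes over the same point.
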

\begin{proof}
Let $D := \{x_{1},\dots,x_{m}\}$ be an arbitrary finite subset of $K$. For each $j = 1,2,\dots,m$, define $D_{j}$ by using \eqref{eqn:Dj}. Also, set $D^{\ast} := \bigcup_{j=1}^{m} D_{j}$.

Let $\sigma := \langle e_{1},\dots,e_{m} \rangle$ be the standard $(m-1)$-simplex in $\R^{m}$. Suppose that $\lambda_{2} \in \langle e_{1},e_{2} \rangle$, then we can represent $\lambda_{2}$ with a scalar $s_{1} \in [0,1]$ such that $\lambda = (1-s_{1})e_{2} + s_{1}e_{1}$. Now, if $\lambda_{3} \in \langle e_{1},e_{2},e_{3} \rangle$, then $\lambda_{3} = (1-s_{2})e_{3} + s_{2}\lambda_{2}$ for some $\lambda_{2} \in \langle e_{1},e_{2} \rangle$. By the earlier fact, $\lambda_{2}$ is represented by some $s_{1} \in [0,1]$.  Hence, $\lambda_{3}$ can be represented by a $2$-dimensional vector $[s_{1} \ s_{2}]^{\top} \in [0,1]^{2}$. Likewise for $j = 3,\dots,m$, we can see that any $\lambda_{j} \in \langle e_{1},\dots,e_{j} \rangle$ is represented by a $(j-1)$-dimensional vector $[s_{1} \ \dots \ s_{j-1}]^{\top} \in [0,1]^{j-1}$. We shall adopt the notation $\lambda_{j} \equiv [s_{1} \ \dots \ s_{j-1}]^{\top}$ for the above representation.

Define a mapping $T : \sigma \to D^{\ast}$ by induction as follows: if $\lambda_{2} \equiv s_{1} \in \langle e_{1},e_{2} \rangle$, let $T(\lambda_{2}) := \gamma_{x_{2},x_{1}}(s_{1})$. For $j = 2,3,\dots,m$, if $\lambda_{j} \equiv [s_{1} \ \dots \ s_{j-1}]^{\top} \in \langle e_{1},\dots,e_{j} \rangle \setminus \langle e_{1},\dots,e_{j-1} \rangle$, then define $T(\lambda_{j}) := \gamma_{x_{j},T(\lambda_{j-1})}(s_{j-1})$, where $\lambda_{j-1} \equiv [s_{1} \ \dots \ s_{j-2}]^{\top} \in \langle e_{1},\dots,e_{j-1} \rangle$.

We now show that $T$ is continuous. Let $\lambda_{m},\mu_{m} \in \sigma$, and $\lambda \equiv [s_{1} \ \dots \ s_{m-1}]^{\top}$ and $\mu \equiv [t_{1} \ \dots \ t_{m-1}]^{\top}$ respectively. For simplicity, let $\lambda_{j} \equiv [s_{1} \ \dots \ s_{j-1}]^{\top}$ and $\mu_{j} \equiv [t_{1} \ \dots \ t_{j-1}]^{\top}$, for $j = 1,\dots,m$. Indeed, we have
\begin{align*}
\rho(T(\lambda_{m}),T(\mu_{m})) &= \rho(\gamma_{x_{m},T(\lambda_{m-1})}(s_{m-1}),\gamma_{x_{m},T(\mu_{m-1})}(t_{m-1})) \\
&\leq \rho(\gamma_{x_{m},T(\lambda_{m-1})}(s_{m-1}),\gamma_{x_{m},T(\lambda_{m-1})}(t_{m-1})) \\ &\quad+ \rho(\gamma_{x_{m},T(\lambda_{m-1})}(t_{m-1}),\gamma_{x_{m},T(\mu_{m-1})}(t_{m-1})) \\
&\leq \abs{s_{m-1} - t_{m-1}} \diam(D^{\ast}) + \rho(T(\lambda_{m-2}),T(\mu_{m-2})) \\
&\;\; \vdots \\
&\leq \sum_{i=1}^{m-1} \abs{s_{i} - t_{i}} \diam(D^{\ast}).
\end{align*}
This is sufficient to guarantee the continuity of $T$.

For each $j = 1,\dots,m$, define $E_{j} := T^{-1}(D^{\ast} \cap G_{i}(x_{j})) \subset \sigma$. By the continuity of $T$ and Lemma \ref{eqn:Dj}, we may see that $E_{j}$'s are closed sets. Suppose that $I \subset \{1,\dots,m\}$, $\lambda \in \co(\{e_{i} \;;\; i \in I\})$ and $\lambda \equiv [s_{1} \ \dots \ s_{m-1}]^{\top}$. Then, $s_{j} = 1$ if $j \not\in I$. By the definition of $T$, we have
%\[
%T(\lambda) \in \bigcup_{i \in I} D_{i} \subset \co(\{x_{i} \;;\; i \in I\}) \subset \bigcup_{i \in I} G(x_{i}).
%\]
\[
T(\lambda) \in \co(\{x_{i} \;;\; i \in I\}) \subset \bigcup_{i \in I} G(x_{i}).
\]
It follows that $T(\lambda) \in D^{\ast} \cap G(x_{i})$ for some $i \in I$. In other words, we have $\lambda \in E_{i}$ and therefore $\co(\{e_{i} \;;\; i \in I\}) \subset \bigcup_{i \in I} E_{i}$ for any $I \subset \{1,\dots,m\}$. By applying the original KKM covering lemma, we get the existence of $\lambda^{\ast} \in \bigcap_{j=1}^{m} E_{j}$. Hence, we get $T(\lambda^{\ast}) \in \bigcap_{j=1}^{m} G(x_{j})$. In fact, we have proved that the family of closed sets $\{G(x)\}_{x \in X}$ has the finite intersection property. The nonemptiness of the intersection $\bigcap_{x\in X} G(x) = \bigcap_{x\in X} (G(x)\cap G(x'))$ follows from the fact that $\{G(x)\cap G(x')\}_{x \in X}$ is a family of closed subsets with finite intersection property in a compact subspace $G(x')$.
\end{proof}

\section{Solutions of Equilibrium Problems}\label{sec:existence}

We mainly discuss in this section two topics. First, we deduce the solvability of the problem $EP(K,F)$ under standard (semi)continuity, convexity, and compactness/coercivity assumptions. Secondly, we introduce the dual problem to $EP(K,F)$ in the sense of Minty and show the relationship between the primal and dual problems, in terms of their solutions. Note that the knowledge of the dual problem is required in the subsequent sections.

\subsection{Existence theorems}

Based on the KKM principle in the previous section, we can show under some natural assumptions that the equilibrium problem $EP(K,F)$ is solvable.

\begin{thm}\label{thm:mainexistence}
Suppose that $K \subset X$ is closed convex, and $F : K \times K \to \R$ is a bifunction satisfying the following properties:
\begin{enumerate}[label=(A\arabic*)]
\item\label{asmp:1} $F(x,x) \geq 0$ for each $x \in K$.
\item\label{asmp:2} For every $x \in K$, the set $\{y \in K \;;\; F(x,y) < 0\}$ is convex.
\item\label{asmp:3} For every $y \in K$, the function $x \mapsto F(x,y)$ is usc.
\item\label{asmp:4} There exists a compact subset $L \subset K$ containing a point $y_{0} \in L$ such that $F(x,y_{0}) < 0$ whenever $x \in K\setminus L$.
\end{enumerate}
Then, the problem $EP(K,F)$ has a solution and the solution set $\mathcal{E}(K,F)$ is closed. In addition, if the function $x \mapsto F(x,y)$ is quasi-concave for every fixed $y \in K$, then $\mathcal{E}(K,F)$ is convex.
\end{thm}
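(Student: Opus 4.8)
The plan is to realize $\mathcal{E}(K,F)$ as the intersection of a KKM family and then invoke Theorem~\ref{lem:KKM}. Concretely, for each $y \in K$ set
\[
G(y) := \{x \in K \st F(x,y) \geq 0\},
\]
so that, directly from the formulation of $EP(K,F)$, we have $\mathcal{E}(K,F) = \bigcap_{y \in K} G(y)$. Everything then reduces to checking that the multimap $G : K \multimap K$ satisfies the hypotheses of Theorem~\ref{lem:KKM}.

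First, each $G(y)$ is closed: by \ref{asmp:3} the function $x \mapsto F(x,y)$ is usc, so its super-level set $\{x \in K \st F(x,y) \geq 0\}$ is closed in $K$, and $K$ is closed in $X$. Second, I would verify the KKM inclusion \eqref{eqn:KKMcondition}. Given a finite $D := \{x_{1},\dots,x_{m}\} \subset K$, suppose for contradiction that some $z \in \co(D)$ fails to lie in $\bigcup_{i=1}^{m} G(x_{i})$, i.e. $F(z,x_{i}) < 0$ for every $i$. Then $x_{1},\dots,x_{m}$ all lie in the set $\{y \in K \st F(z,y) < 0\}$, which is convex by \ref{asmp:2}; hence this set contains $\co(D)$ and in particular $z$, so $F(z,z) < 0$, contradicting \ref{asmp:1}. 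Third, \ref{asmp:4} provides a compact value: taking $x_{0} := y_{0}$, the inclusion $G(y_{0}) \subset L$ holds because $F(x,y_{0}) < 0$ for every $x \in K \setminus L$, and being a closed subset of the compact set $L$, $G(y_{0})$ is itself compact.

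With all hypotheses verified, Theorem~\ref{lem:KKM} gives $\bigcap_{y \in K} G(y) \neq \emptyset$, which is exactly $\mathcal{E}(K,F) \neq \emptyset$; and since $\mathcal{E}(K,F)$ is an intersection of the closed sets $G(y)$, it is closed. For the last assertion, if $x \mapsto F(x,y)$ is quasi-concave for each fixed $y \in K$, then every super-level set $G(y)$ is convex, whence $\mathcal{E}(K,F) = \bigcap_{y \in K} G(y)$ is convex as an intersection of convex sets.

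I do not expect a genuine obstacle here: the difficult work is already encapsulated in Theorem~\ref{lem:KKM}, which — crucially for this paper — does not require the CHFP, so no compactness of $\co(D)$ is needed. The one place calling for a little care is the verification of the covering condition \eqref{eqn:KKMcondition}: one must argue purely through \ref{asmp:1} and \ref{asmp:2}, without invoking any structure of $\co(D)$. A secondary point worth stating explicitly is the semicontinuity convention (usc $\Rightarrow$ closed super-level sets) used to obtain closed values of $G$, together with the elementary fact that a closed subset of a compact set is compact.
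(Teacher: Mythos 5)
Your proposal is correct and follows essentially the same route as the paper: the same KKM map $G(y)=\{x\in K\st F(x,y)\ge 0\}$, the same contradiction argument via \ref{asmp:1} and \ref{asmp:2} for the covering condition \eqref{eqn:KKMcondition}, and the same use of \ref{asmp:4} to get a compact value before invoking Theorem~\ref{lem:KKM}. Your closedness and convexity arguments (intersection of closed, resp.\ convex, super-level sets) are trivially equivalent to the paper's sequential and pointwise computations, and your attribution of closed values to \ref{asmp:3} is in fact cleaner than the paper's.
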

\begin{proof}
Define a set-valued mapping $G : K \multimap K$ by
\[
G(y) := \overline{\lev}_{F(\cdot,y)}(0) = \{x \in K \;;\; F(x,y) \geq 0\}, \quad \forall y \in K.
\]
It is clear that $EP(K,F)$ has a solution if and only if $\bigcap_{y \in K} G(y) \neq \emptyset$. Thus, it is sufficient to show the nonemptiness of $\bigcap_{y \in K} G(y)$.

By \ref{asmp:1} and \ref{asmp:2}, we may see that $G$ has nonempty closed values at all $y \in K$. We will show now that $G$ satisfies the inclusion \eqref{eqn:KKMcondition}. Let us suppose to the contrary that there exist $y_{1},\dots,y_{m} \in K$ such that $\co(\{y_{1},\dots,y_{m}\}) \not\subset \bigcup_{j=1}^{m} G(y_{i})$. That is, there exists a point $y^{\ast} \in \co(\{y_{1},\dots,y_{m}\})$ such that $y^{\ast} \not\in G(y_{i})$ for any $i = 1,\dots,m$. It further implies that
\[
F(y^{\ast},y_{i}) < 0, \quad \forall i = 1,\dots,m.
\]
Moreover, we have for all $i = 1,\dots,m$, $y_{i} \in \{y \in K \;;\; F(y^{\ast},y) < 0\}$, which is a convex set by hypothesis. Since $y^{\ast} \in \co(\{y_{1},\dots,y_{m}\})$ and $\co(\{y_{1},\dots,y_{m}\})$ is the smallest convex set containing $y_{1},\dots,y_{m}$, we get
\[
y^{\ast} \in \co(\{y_{1},\dots,y_{m}\}) \subset \{y \in K \;;\; F(y^{\ast},y) < 0\},
\]
which further gives $F(y^{\ast},y^{\ast}) < 0$. This contradicts hypothesis \ref{asmp:1}, and therefore $G$ satisfies \eqref{eqn:KKMcondition}. On the other hand, hypothesis \ref{asmp:4} forces $G(y_{0}) \subset L$, which guarantees the compactness of $G(y_{0})$. Since $G$ satisfies every conditions of Lemma \ref{lem:KKM}, we get $\bigcap_{x \in X} G(x) \neq \emptyset$.

Next, suppose that $(x^{k}) \subset \mathcal{E}(K,F)$ is convergent to $\bar{x} \in K$, then \ref{asmp:3} gives
\[
F(\bar{x},y) \geq \limsup_{k \tendsto \infty} F(x^{k},y) \geq 0, \quad \forall y \in K.
\]
So, we have $\bar{x} \in \mathcal{E}(K,F)$ and hence the closedness of $\mathcal{E}(K,F)$.

Now, assume that $x \mapsto F(x,y)$ is quasi-concave for every fixed $y \in K$. Let $\hat{x},\hat{y} \in \mathcal{E}(K,F)$ and $t \in [0,1]$. Then, we have
\[
F(\gamma_{\hat{x},\hat{y}}(t),y) \geq \min\{F(\hat{x},y),F(\hat{y},y)\} \geq 0, \quad \forall y \in K,
\]
which implies the convexity of $\mathcal{E}(K,F)$.
\end{proof}

Next, we deduce an existence theorem for a compact convex domain.
\begin{cor}\label{cor:compactcase}
Suppose that $K \subset X$ is compact convex, and $F : K \times K \to \R$ is a bifunction satisfying the following properties:
\begin{enumerate}[label=(A\arabic*)]
\item $F(x,x) \geq 0$ for each $x \in K$.
\item For every $x \in K$, the set $\{y \in K \;;\; F(x,y) < 0\}$ is convex.
\item For every $y \in K$, the function $x \mapsto F(x,y)$ is usc.
\end{enumerate}
Then, the problem $EP(K,F)$ has a solution and the solution set $\mathcal{E}(K,F)$ is closed. In addition, if the function $x \mapsto F(x,y)$ is quasi-concave for every fixed $y \in K$, then $\mathcal{E}(K,F)$ is convex.
\end{cor}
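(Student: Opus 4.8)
The plan is to obtain this corollary as an immediate specialization of Theorem~\ref{thm:mainexistence}. The three hypotheses imposed on $F$ here are literally assumptions \ref{asmp:1}, \ref{asmp:2}, and \ref{asmp:3} of that theorem, so the only thing left to verify is the compactness/coercivity condition \ref{asmp:4}.

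First I would note that a compact set is closed, so $K$ is in particular closed and convex, which matches the standing hypothesis of Theorem~\ref{thm:mainexistence}. Then, since $K$ is nonempty (as assumed throughout the paper), I would pick any point $y_{0} \in K$ and put $L := K$. With this choice $L$ is a compact subset of $K$ containing $y_{0}$, and $K \setminus L = \emptyset$, so the implication ``$F(x,y_{0}) < 0$ whenever $x \in K \setminus L$'' holds vacuously. Hence \ref{asmp:4} is satisfied.

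All four hypotheses of Theorem~\ref{thm:mainexistence} now being in force, that theorem gives that $EP(K,F)$ has a solution and that $\mathcal{E}(K,F)$ is closed; and if in addition $x \mapsto F(x,y)$ is quasi-concave for each fixed $y \in K$, it further gives that $\mathcal{E}(K,F)$ is convex. This is precisely the assertion of the corollary.

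There is essentially no obstacle: the whole point is that compactness of the domain trivially subsumes the coercivity-type condition \ref{asmp:4}. The only things needing a moment's care are the routine observations that compact $\Rightarrow$ closed and that the defining implication of \ref{asmp:4} becomes vacuous once $L = K$, so that nonemptiness of $K$ alone supplies the required point $y_{0}$.
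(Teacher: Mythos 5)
Your proposal is correct and is exactly the paper's argument: the paper also proves this corollary by applying Theorem~\ref{thm:mainexistence} with $L = K$, so that hypothesis \ref{asmp:4} holds vacuously. Your additional remarks (compactness implies closedness, nonemptiness of $K$ supplies $y_{0}$) are just the routine details the paper leaves implicit.
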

\begin{proof}
Apply Theorem \ref{thm:mainexistence} with $L = K$ in the hypothesis \ref{asmp:4}.
\end{proof}

\subsection{Dual Equilibrium Problem}

It is essential to also introduce here the duality to the problem $EP(K,F)$ in the sense of Minty. To clarify the terminology, we shall sometimes refer to $EP(K,F)$ as the \emph{primal problem}. The \emph{dual equilibrium problem} to $EP(K,F)$, denoted by $EP^{\ast}(K,F)$, is given as follows:
\[
\text{Find a point $\bar{x} \in K$ such that $F(y,\bar{x}) \leq 0$ for every $y \in K$.}	\tag*{$EP^{\ast}(K,F)$}
\]
The solution to $EP^{\ast}(K,F)$ will also be called the \emph{dual solution to $EP(K,F)$}, and the set of such dual solutions is then denoted by $\mathcal{E}^{\ast}(K,F)$.

\begin{dfn}
$F$ is called \emph{monotone} if $F(x,y) + F(y,x) \leq 0$ for all $x,y \in K$.
\end{dfn}

If $F$ is a monotone bifunction, we immediately have the inclusion $\mathcal{E}(K,F) \subset \mathcal{E}^{\ast}(K,F)$. To obtain the converse, we need additional assumptions on the bifunction $F$.

\begin{prop}\label{prop:primal=dual}
Suppose that $F :K \times K \to \R$ is a bifunction.
\begin{enumerate}[label=(\roman*)]
\item\label{cdn:primal->dual} If $F$ is monotone, then $\mathcal{E}(K,F) \subset \mathcal{E}^{\ast}(K,F)$.
\item\label{cdn:dual->primal} If \ref{asmp:1}, \ref{asmp:3} holds, and $F$ is convex in the second variable, then $\mathcal{E}^{\ast}(K,F) \subset \mathcal{E}(K,F)$.
\end{enumerate}
In particular, if $F$ is monotone, convex in the second variable, and satisfies
\ref{asmp:1} and \ref{asmp:3}, then $\mathcal{E}^{\ast}(K,F) = \mathcal{E}(K,F)$.
\end{prop}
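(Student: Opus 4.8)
The plan is to establish the two inclusions separately. The inclusion in \ref{cdn:primal->dual} is immediate from the definition: given $\bar{x} \in \mathcal{E}(K,F)$ we have $F(\bar{x},y) \geq 0$ for every $y \in K$, and monotonicity then forces $F(y,\bar{x}) \leq -F(\bar{x},y) \leq 0$ for every $y \in K$, which is exactly $\bar{x} \in \mathcal{E}^{\ast}(K,F)$. No further argument is needed, and in particular neither \ref{asmp:1} nor \ref{asmp:3} is used here.

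For \ref{cdn:dual->primal} I would run the usual Minty-type linearization along geodesics. Fix $\bar{x} \in \mathcal{E}^{\ast}(K,F)$ and an arbitrary $y \in K$; the goal is $F(\bar{x},y) \geq 0$. For $t \in (0,1]$ put $y_{t} := \gamma_{\bar{x},y}(t) = (1-t)\bar{x} \oplus ty$, which lies in $K$ since $K$ is convex. Because $y_{t} \in K$, the dual property of $\bar{x}$ gives $F(y_{t},\bar{x}) \leq 0$. Convexity of $F$ in the second variable, applied to the function $F(y_{t},\cdot)$ along the geodesic $\gamma_{\bar{x},y}$, yields
$F(y_{t},y_{t}) \leq (1-t)F(y_{t},\bar{x}) + tF(y_{t},y)$.
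Combining this with \ref{asmp:1}, namely $F(y_{t},y_{t}) \geq 0$, and with $F(y_{t},\bar{x}) \leq 0$, we obtain $0 \leq tF(y_{t},y)$, hence $F(y_{t},y) \geq 0$ for all $t \in (0,1]$. Since $y_{t} \to \bar{x}$ as $t \dto 0$, the upper semicontinuity assumption \ref{asmp:3} on $x \mapsto F(x,y)$ gives $F(\bar{x},y) \geq \limsup_{t \dto 0} F(y_{t},y) \geq 0$. As $y \in K$ was arbitrary, $\bar{x} \in \mathcal{E}(K,F)$.

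The final ``in particular'' claim then follows at once by combining \ref{cdn:primal->dual} and \ref{cdn:dual->primal}. I do not expect a genuine obstacle in this proposition; the two delicate points, both minor, are that the convexity hypothesis on $F(x,\cdot)$ must be invoked with the \emph{moving} first argument $x = y_{t}$ (so that at each fixed $t$ one is estimating the single convex function $F(y_{t},\cdot)$), and that \ref{asmp:3} is used only in its metric sequential form along the curve $y_{t} \to \bar{x}$, not in any $\Delta$- or $w$-semicontinuity form.
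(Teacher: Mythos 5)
Your proposal is correct and follows essentially the same route as the paper: part \ref{cdn:primal->dual} is the identical one-line monotonicity argument, and part \ref{cdn:dual->primal} is the same Minty-type linearization along $\gamma_{\bar{x},y}$ using \ref{asmp:1}, convexity in the second variable, and \ref{asmp:3} as $t \dto 0$. Your version is in fact slightly cleaner at the step where you isolate $0 \leq t F(y_t,y)$ and divide by $t>0$ before passing to the limit.
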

\begin{proof}
\ref{cdn:primal->dual} Let $\bar{x} \in \mathcal{E}(K,F)$. The monotonicity of $F$ then gives $F(y,\bar{x}) \leq - F(\bar{x},y) \leq 0$, so that $\bar{x} \in \mathcal{E}^{\ast}(K,F)$.

\ref{cdn:dual->primal} Suppose that $\bar{x} \in \mathcal{E}^{\ast}(K,F)$. Let $y \in K$ be arbitrary. Take into account \ref{asmp:1}, we have
\[
0 \leq F(\gamma_{\bar{x},y}(t),\gamma_{\bar{x},y}(t)) \leq (1-t)F(\gamma_{\bar{x},y}(t),\bar{x}) + t F(\gamma_{\bar{x},y}(t),y) \leq F(\gamma_{\bar{x},y}(t),y),
\]
for any $t \in [0,1]$. In view of \ref{asmp:3}, we have $0 \leq \limsup_{t \tendsto 0} F(\gamma_{\bar{x},y}(t),y) \leq F(\bar{x},y)$. This gives $\bar{x} \in \mathcal{E}(K,F)$.
\end{proof}

%\begin{cor}
%Suppose that $F :K \times K \to \R$ is a monotone bifunction such that \ref{asmp:1}, \ref{asmp:3} holds, and $F$ is convex in the second variable. Then, the primal and dual equilibrium problems of $F$ are equivalent.
%\end{cor}

\section{Resolvents of Bifunctions}\label{sec:resolvents}

The resolvent is a fundamental notion in regularization of certain variational problems. It is understood as the solution set of particular regularized problem from the original one. Here, we introduce the resolvent of a bifunction $F$ in relation to the equilibrium problem $EP(K,F)$.

For simplicity, we shall adopt the following perturbation $\tilde{F}$ of a given bifunction $F$. That is, given a bifunction $F : K \times K \to \R$, $\bar{x} \in X$, we define the function $\tilde{F}_{\bar{x}} : K \times K \to \R$ by
\[
\tilde{F}_{\bar{x}} (x,y) := F(x,y) - \pair{\Vec{x\bar{x}}}{\Vec{xy}}, \quad \forall x,y \in K.
\]
We are now ready to construct the resolvent of a bifunction $F$ as the unique equilibrium of a perturbed bifuntion.

\begin{dfn}[\cite{Kimura-Kishi}]
Suppose that $K \subset X$ is closed convex, and $F : K \times K \to \R$. The \emph{resolvent} of $F$ is the mapping $J_{F} : X \multimap K$ defined by
\[
J_{F} (x) := \mathcal{E}(K,\tilde{F}_{x}) = \{z \in K \,|\, F(z,y) - \pair{\Vec{zx}}{\Vec{zy}} \geq 0,\, \forall y \in K\}, \quad \forall x \in X.
\]
\end{dfn}

The question well-definedness of $J_{F}$ is important. Kimura and Kishi \cite{Kimura-Kishi} showed a well-definedness result under the CHFP assumption. The use of CHFP assumption is to trigger the KKM principle of Niculescu and Roven\c{t}a \cite{MR2561730}. However, we can replace such KKM principle of Niculescu and Roven\c{t}a \cite{MR2561730} with our Theorem \ref{lem:KKM} and follows the same proof from \cite{Kimura-Kishi} to obtain the well-definedness of $J_{F}$. Thus, we obtain Kimura and Kishi's theorem without CHFP.

\begin{thm}
Suppose that $F$ has the following properties:
\begin{enumerate}[label=(\roman*)]
\item $F(x,x) = 0$ for all $x \in K$.
\item $F$ is monotone.
\item For each $x \in K$, $y \mapsto F(x,y)$ is convex and lsc.
\item For each $y \in K$, $F(x,y) \geq \limsup_{t \dto 0} F(\gamma_{x,z}(t),y)$ for all $x,z \in K$.
\end{enumerate}
Then $\dom(J_{F}) = X$ and $J_{F}$ is single-valued.
\end{thm}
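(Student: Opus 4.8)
The plan is to establish two things: first, that $J_F(x)$ is nonempty for every $x \in X$ (so $\dom(J_F) = X$), and second, that $J_F(x)$ contains at most one point. For the nonemptiness, I would apply the existence theorem, Theorem~\ref{thm:mainexistence} (or rather Corollary~\ref{cor:compactcase} if one can localize to a compact set), to the perturbed bifunction $\tilde{F}_x(z,y) = F(z,y) - \pair{\Vec{zx}}{\Vec{zy}}$ on $K \times K$. One must verify that $\tilde{F}_x$ inherits the four hypotheses \ref{asmp:1}--\ref{asmp:4}. For \ref{asmp:1}, note $\tilde{F}_x(z,z) = F(z,z) - \pair{\Vec{zx}}{\Vec{zz}} = 0 - 0 = 0$, using $\Vec{zz} = \mathbf{0}$ and the vanishing of the product against it. For \ref{asmp:3}, we need $z \mapsto \tilde{F}_x(z,y)$ to be usc; since $z \mapsto F(z,y)$ is usc by hypothesis (iv) gives the one-sided continuity along geodesics, but we actually need full upper semicontinuity in $z$ — this is one subtle point where I would lean on hypothesis (iv) together with the structure of the problem, or observe that in the Kimura--Kishi framework the relevant semicontinuity is exactly the geodesic $\limsup$ condition, which suffices once one checks the existence proof only uses that form.

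The convexity hypothesis \ref{asmp:2} for $\tilde{F}_x$ — that $\{y \in K : \tilde{F}_x(z,y) < 0\}$ is convex — follows because $y \mapsto F(z,y)$ is convex and $y \mapsto \pair{\Vec{zx}}{\Vec{zy}}$ is, by Lemma~\ref{prop:twopair}\ref{prop:ontheway}, concave along geodesics emanating from $z$ in the sense that $\lambda\pair{\Vec{zu}}{\Vec{zv}} \le \pair{\Vec{z\gamma_{z,u}(\lambda)}}{\Vec{zv}}$; combined carefully this makes $y \mapsto \tilde{F}_x(z,y)$ convex, hence its sublevel set $\{<0\}$ is convex. Actually the cleaner route: $y \mapsto -\pair{\Vec{zx}}{\Vec{zy}}$ is convex (this is a standard fact about quasilinearization, or follows from the $\CAT(0)$ inequality applied to $\rho^2(x,\cdot)$ which is convex), so $\tilde{F}_x(z,\cdot)$ is a sum of two convex functions and is convex; the sublevel set is then automatically convex. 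The coercivity hypothesis \ref{asmp:4} is the delicate one: the extra term $-\pair{\Vec{zx}}{\Vec{zy_0}}$ must drive $\tilde{F}_x(z,y_0) \to -\infty$ as $\rho(z,\cdot) \to \infty$ for a suitable choice of $y_0$. The natural choice is $y_0 = x$ when $x \in K$; then $\tilde{F}_x(z,x) = F(z,x) - \pair{\Vec{zx}}{\Vec{zx}} = F(z,x) - \rho^2(z,x)$, and by monotonicity $F(z,x) \le -F(x,z) \le$ (something controlled by convexity/continuity, bounded below linearly in $\rho(z,x)$), so $\tilde{F}_x(z,x) \le C\rho(z,x) - \rho^2(z,x) < 0$ for $\rho(z,x)$ large. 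For general $x \in X \setminus K$ one replaces $x$ by its projection onto $K$ and uses a comparison argument, or argues directly with $y_0$ the metric projection $P_K(x)$.

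For uniqueness, suppose $z_1, z_2 \in J_F(x)$. Then writing the defining inequality for $z_1$ tested at $y = z_2$ and for $z_2$ tested at $y = z_1$ and adding:
\[
F(z_1, z_2) - \pair{\Vec{z_1 x}}{\Vec{z_1 z_2}} + F(z_2, z_1) - \pair{\Vec{z_2 x}}{\Vec{z_2 z_1}} \ge 0.
\]
By monotonicity $F(z_1,z_2) + F(z_2,z_1) \le 0$, so $\pair{\Vec{z_1 x}}{\Vec{z_1 z_2}} + \pair{\Vec{z_2 x}}{\Vec{z_2 z_1}} \le 0$. Now I would rewrite this using the quasilinearization identity: by Lemma~\ref{prop:twopair}\ref{prop:twopair1} applied appropriately, or by direct expansion of the bracket definition, one gets $\pair{\Vec{z_1 x}}{\Vec{z_1 z_2}} + \pair{\Vec{z_2 x}}{\Vec{z_2 z_1}} = \rho^2(z_1,z_2) - \pair{\Vec{z_1 z_2}}{\Vec{z_1 z_2}} + (\text{cross terms})$; more precisely one should recognize the left side equals $\rho^2(z_1,z_2)$ minus a nonnegative quantity, or directly that it majorizes $\rho^2(z_1,z_2)$. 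Carrying out the expansion: $\pair{\Vec{z_1 x}}{\Vec{z_1 z_2}} + \pair{\Vec{z_2 x}}{\Vec{z_2 z_1}}$, after substituting the definition $\pair{\Vec{uv}}{\Vec{xy}} = \frac12[\rho^2(u,y)+\rho^2(v,x)-\rho^2(u,x)-\rho^2(v,y)]$, collapses to $\rho^2(z_1,z_2)$ exactly (the $x$-dependent terms cancel). Hence $\rho^2(z_1,z_2) \le 0$, forcing $z_1 = z_2$. This last computation is routine and is the same identity underlying firm nonexpansiveness of resolvents.

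The main obstacle I anticipate is verifying hypothesis \ref{asmp:4} (coercivity of $\tilde{F}_x$) cleanly for arbitrary $x \in X$, not merely $x \in K$: one needs a good choice of the anchor point $y_0 \in L$ and a quantitative lower bound on $F(z,x)$ growing at most linearly in $\rho(z,x)$ — this requires combining monotonicity with the one-sided continuity (iv) and convexity (iii) to control $F$ from below, since the hypotheses do not a priori bound $F$ at all. The usc/upper-semicontinuity gap in \ref{asmp:3} is the second point deserving care: the existence theorem as stated wants genuine upper semicontinuity in the first variable, whereas only the geodesic $\limsup$ condition (iv) is assumed, so either the existence proof must be revisited to see that (iv) suffices, or one argues that the relevant sublevel sets are closed via the geodesic structure. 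Everything else — \ref{asmp:1}, \ref{asmp:2}, and the uniqueness argument — is a direct consequence of the quasilinearization identities in Lemma~\ref{prop:twopair} and the monotonicity of $F$.
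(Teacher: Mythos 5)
Your uniqueness argument is correct and is exactly the computation the paper uses elsewhere (Proposition \ref{prop:resolventPROP}\ref{cdn:singlevalued}): summing the two defining inequalities, monotonicity and Lemma \ref{prop:twopair}\ref{prop:twopair1} give $\rho^{2}(z_{1},z_{2})\leq 0$. Note also that the paper does not actually write out the existence half: it invokes Kimura--Kishi's proof verbatim with their KKM principle replaced by Theorem \ref{lem:KKM}. Your attempt to route existence through Theorem \ref{thm:mainexistence} is therefore a different reduction, and as written it does not close.

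The concrete problem is your ``cleaner route'' for \ref{asmp:2}: the function $y\mapsto-\pair{\Vec{zx}}{\Vec{zy}}=\tfrac12\bigl[\rho^{2}(x,y)-\rho^{2}(z,y)-\rho^{2}(x,z)\bigr]$ is \emph{not} convex in a general Hadamard space; its convexity along a geodesic is equivalent to the reverse of \eqref{eqn:CN} for $z$, i.e.\ to flatness. (Tripod counterexample: $x$ the branch point, $z$ at distance $R$ on one ray, $u,v$ at distance $1$ on the other two rays; midpoint convexity along $[u,v]$ forces $0\leq -R$.) This is exactly why the proposition following this theorem in the paper must assume $N$ flat in order to invoke Lemma \ref{prop:twopair}\ref{prop:affine}. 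The working mechanism --- closer to your first, abandoned suggestion --- is that one never needs convexity of the perturbation except \emph{at the point $z$ itself}: if $z=\gamma_{y_{1},y_{2}}(t)$, then \eqref{eqn:CN} yields $(1-t)\pair{\Vec{zx}}{\Vec{zy_{1}}}+t\pair{\Vec{zx}}{\Vec{zy_{2}}}\leq 0=\pair{\Vec{zx}}{\Vec{zz}}$, which together with convexity of $F(z,\cdot)$ gives the KKM inclusion for $G(y)=\{z\in K: \tilde F_{x}(z,y)\geq 0\}$ directly, with no appeal to \ref{asmp:2}. Your second flagged issue (\ref{asmp:3}) is likewise a real gap that cannot be waved away: hypothesis (iv) does not make the sets $G(y)$ closed. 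The standard repair, and what Kimura--Kishi actually do, is the Blum--Oettli detour: by monotonicity $G(y)\subset G'(y):=\{z\in K: F(y,z)+\pair{\Vec{zx}}{\Vec{zy}}\leq 0\}$, and $G'(y)$ is closed and convex because $z\mapsto F(y,z)+\tfrac12\rho^{2}(z,y)+\tfrac12\rho^{2}(z,x)$ is convex and lsc; one applies the KKM principle to $G'$, and then converts a point of $\bigcap_{y}G'(y)$ back into a point of $\bigcap_{y}G(y)$ using (i), (iii), (iv) exactly as in the proof of Proposition \ref{prop:primal=dual}\ref{cdn:dual->primal}. That detour also dissolves your coercivity worry, since the strongly convex term $\tfrac12\rho^{2}(z,x)$ bounds $G'(y)$ without any lower estimate on $F$. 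Without these two repairs the existence half of your proposal fails.
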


Note that if we drop the assumption that $y \mapsto F(x,y)$ is lsc, we get the following result by imposing the flatness and local compactness.

\begin{prop}
Suppose that hypotheses \ref{asmp:1}, \ref{asmp:3}, and \ref{asmp:4} from Theorem \ref{thm:mainexistence} are satisfied. Suppose also that the following additional assumptions hold:
\begin{enumerate}[label=(\roman*)]
\item $F$ is monotone.
\item for any fixed $x \in K$, $y \mapsto F(x,y)$ is convex.
\end{enumerate}
If a nonempty convex subset $N \subset X$ is flat and locally compact, then the resolvent $J_{F}$ is defined for all $x \in N$. That is, $N \subset \dom(J_{F})$.
\end{prop}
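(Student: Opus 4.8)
The plan is to produce, for each fixed $x \in N$, a solution of $EP(K,\tilde F_x)$ by working inside a well-chosen compact convex subset of $N$ and then invoking Corollary~\ref{cor:compactcase} (the compact-domain existence theorem) applied to the restricted bifunction. To do this I must verify that $\tilde F_x$ inherits the hypotheses of that corollary — namely (A1), convexity of the strict sublevel sets in the second variable, and upper semicontinuity in the first variable — and then combine the coercivity assumption (A4) with the local compactness of $N$ to confine the search to a compact convex set. The flatness of $N$ is the ingredient that repairs the loss of lower semicontinuity: by Lemma~\ref{prop:twopair}\ref{prop:affine}, on a flat set the map $u \mapsto \pair{\Vec{ux}}{\Vec{uy}}$ behaves affinely, so subtracting it does not destroy convexity/concavity properties in either variable in the way it would on a general $\CAT(0)$ space.

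First I would check the assumptions of Corollary~\ref{cor:compactcase} for $\tilde F_x$ restricted to a compact convex $C \subset N$ containing $y_0$ (from (A4)). Assumption (A1): $\tilde F_x(z,z) = F(z,z) - \pair{\Vec{zx}}{\Vec{zz}} = F(z,z) \ge 0$ since $\Vec{zz} = \mathbf 0$, so (A1) for $F$ gives (A1) for $\tilde F_x$. Assumption (A2): for fixed $z$, $y \mapsto F(z,y)$ is convex by hypothesis (ii), and $y \mapsto -\pair{\Vec{zx}}{\Vec{zy}}$ is concave in general but affine on the flat set $N$ by Lemma~\ref{prop:twopair}\ref{prop:affine}; hence $y \mapsto \tilde F_x(z,y)$ is convex on $N$, and in particular quasi-convex, so its strict sublevel set $\{y : \tilde F_x(z,y) < 0\}$ is convex. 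Assumption (A3): $z \mapsto F(z,y)$ is usc by (A3) for $F$; I need $z \mapsto \pair{\Vec{zx}}{\Vec{zy}}$ to be continuous (or at least lsc), which follows from continuity of the metric $\rho$ in the definition of $\pair{\cdot}{\cdot}$, so $z \mapsto \tilde F_x(z,y)$ is usc. Thus $\tilde F_x$ satisfies (A1)–(A3) on any compact convex $C \subset N$.

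Next comes the coercivity/compactness reduction, which I expect to be the main obstacle, since (A4) is stated for $F$ on $K$ and must be transferred to $\tilde F_x$ on a compact convex subset of $N$. The idea: by (A4) there is a compact $L \subset K$ and $y_0 \in L$ with $F(u,y_0) < 0$ for $u \in K \setminus L$. For $u$ far from $x$ the correction term $-\pair{\Vec{ux}}{\Vec{uy_0}}$ is controlled — using Lemma~\ref{prop:twopair}\ref{prop:twopair1} and the definition of $\pair{\cdot}{\cdot}$ one gets $\pair{\Vec{ux}}{\Vec{uy_0}} = \tfrac12(\rho^2(u,y_0)+\rho^2(u,x)-\rho^2(x,y_0) - \cdots)$ — so one shows $\tilde F_x(u,y_0) = F(u,y_0) - \pair{\Vec{ux}}{\Vec{uy_0}} < 0$ for all $u$ outside some bounded (hence, by local compactness of $N$ together with the closedness of the relevant set, relatively compact) subset of $N$. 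Concretely I would fix a closed ball $B$ around $x$ large enough that $B \cap N$ is compact (local compactness of $N$) and contains both $L \cap N$ and $y_0$, take $C := \overline{\co}(B \cap N) \cap N$ which remains compact and convex and lies in $N$ by convexity of $N$, and argue that any solution of $EP(C,\tilde F_x)$ is automatically a solution of $EP(N,\tilde F_x)$ because the sublevel obstruction at $y_0$ rules out candidates outside $C$. The delicate point is ensuring $B \cap N$ is genuinely compact (not merely bounded) — this is exactly where local compactness of $N$ is used, possibly shrinking $B$ — and that enlarging to its convex hull stays compact, which holds because a polytope generated by finitely many points is compact in a $\CAT(0)$ space and a routine limiting argument (as in the lemma preceding Theorem~\ref{lem:KKM}) extends this to the closed convex hull of a compact set inside a locally compact convex set.

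Finally, applying Corollary~\ref{cor:compactcase} to $\tilde F_x$ on $C$ yields a point $z \in C \subset N \subset K$ with $\tilde F_x(z,y) \ge 0$ for all $y \in C$; combining with the coercivity step (which forces any point violating the inequality for some $y \in N$ to already violate it for $y_0 \in C$) upgrades this to $\tilde F_x(z,y)\ge 0$ for all $y \in N$. But $\tilde F_x(z,y)\ge 0$ for all $y$ in the ambient convex set is precisely the statement $z \in \mathcal E(K,\tilde F_x) = J_F(x)$ once one checks the inequality also holds for $y \in K \setminus N$; for this last extension I would again use monotonicity together with the flatness/affineness to push the inequality from $N$ out to $K$ along geodesics, in the same spirit as Proposition~\ref{prop:primal=dual}\ref{cdn:dual->primal}. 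Hence $J_F(x) \neq \emptyset$ for every $x \in N$, i.e. $N \subset \dom(J_F)$, which is the claim.
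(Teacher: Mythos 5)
Your verification of (A1)--(A3) for the perturbed bifunction matches the paper's, but the core of your argument diverges from the paper's at the coercivity step, and as written it has a genuine gap. The paper does \emph{not} restrict to a compact convex subdomain and invoke Corollary~\ref{cor:compactcase}. Instead it first applies Theorem~\ref{thm:mainexistence} to $F$ itself to produce an equilibrium $\bar{x} \in \mathcal{E}(K,F)$, and then uses this $\bar{x}$ --- not the point $y_{0}$ from \ref{asmp:4} --- as the coercivity anchor for the perturbed problem: monotonicity gives $F(z,\bar{x}) \leq -F(\bar{x},z) \leq 0$, and Lemma~\ref{prop:twopair}\ref{prop:twopair1} together with Cauchy--Schwarz yields
\[
\tilde{F}_{x}(z,\bar{x}) \;\leq\; \bigl(\rho(\bar{x},x) - \rho(\bar{x},z)\bigr)\rho(\bar{x},z),
\]
which is negative outside the closed ball of radius $\rho(\bar{x},x)$ about $\bar{x}$; that ball intersected with the domain is compact by local compactness, so \ref{asmp:4} holds for $\tilde{F}_{x}$ and Theorem~\ref{thm:mainexistence} applies \emph{directly to $\tilde F_x$ on all of $K$}. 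This is why monotonicity appears among the hypotheses. In Theorem~\ref{thm:mainexistence} the coercivity set confines only the solution while the test points $y$ still range over the whole domain, so no restriction--extension step is ever needed.

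The gap in your proposal is precisely that restriction--extension step. Having found $z \in C$ with $\tilde{F}_{x}(z,y) \geq 0$ for all $y$ in a compact convex $C \subset N$, you must upgrade this to all $y \in K$, and your suggestion to ``push the inequality out along geodesics in the spirit of Proposition~\ref{prop:primal=dual}\ref{cdn:dual->primal}'' does not work: that argument evaluates $F$ along $\gamma_{\bar{x},y}(t)$ for small $t$, and here the geodesic from $z$ to a point $y \in K \setminus C$ may leave $C$ (indeed leave $N$) immediately, so neither the convexity of $\tilde F_x(z,\cdot)$ on the flat set nor the equilibrium property on $C$ gives any information about $\tilde{F}_{x}(z,y)$. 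Two further remarks. First, your transfer of \ref{asmp:4} using $y_{0}$ is actually salvageable --- once $\rho(u,y_{0}) \geq \rho(x,y_{0})$ one has $\pair{\Vec{ux}}{\Vec{uy_{0}}} = \tfrac12[\rho^{2}(u,y_{0}) + \rho^{2}(u,x) - \rho^{2}(x,y_{0})] \geq 0$, so $\tilde F_x(u,y_0)<0$ outside the union of $L$ with a closed ball --- but the right move is then to feed this compact set into Theorem~\ref{thm:mainexistence} directly rather than detour through Corollary~\ref{cor:compactcase}; the coercivity set there need not be convex, so your worries about compactness of closed convex hulls are moot. Second, the mismatch you are implicitly fighting (the perturbed problem lives on $K$ while flatness and local compactness are asserted only for $N$) is present in the paper's own proof as well, which silently uses flatness and local compactness of $K$; it is not something your construction of $C \subset N$ can repair.
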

\begin{proof}
Fix $\tilde{x} \in N$, we define a bifunction $g_{\tilde{x}} : K \times K \to \R$ by
\[
g_{\tilde{x}} (z,y) := F(z,y) - \pair{\Vec{z\tilde{x}}}{\Vec{zy}}, \quad \forall z,y \in K.
\]
Then $g_{\tilde{x}}(z,z) \geq 0$ for all $z \in K$, and $z \mapsto g_{\tilde{x}}(z,y)$ is usc for any fixed $y \in K$. Since $K$ is flat, the mapping $y \mapsto \pair{\Vec{z\tilde{x}}}{\Vec{zy}}$ is affine by \ref{prop:affine} of Lemma \ref{prop:twopair}. Therefore, the set $\{y \in K \;;\; g_{\tilde{x}}(z,y) < 0\}$ is convex for each $z \in K$.

Since all assumptions of Theorem \ref{thm:mainexistence} are satisfied, $F$ has an equilibrium $\bar{x} \in K$. Now, using the monotonicity of $F$ and \ref{prop:twopair1} of Lemma \ref{prop:twopair}, we get
\begin{align*}
g_{\tilde{x}}(z,\bar{x}) &= F(z,\bar{x}) - \pair{\Vec{z\tilde{x}}}{\Vec{z\bar{x}}} \\
&\leq -F(\bar{x},z) + \pair{\Vec{\bar{x}\tilde{x}}}{\Vec{\bar{x}z}} - \rho^{2}(\bar{x},z) \\
&\leq (\rho(\bar{x},\tilde{x}) - \rho(\bar{x},z))\rho(\bar{x},z).
\end{align*}
Set $L := K \cap \cl A(\bar{x};\rho(\bar{x},\tilde{x}))$, where $\cl(\cdot)$ denotes the closure operator. Since $K$ is locally compact and $L$ is closed and bounded, we get the compactness of $L$. Moreover, we have $\bar{x} \in L$ and $g_{\tilde{x}}(z,\bar{x}) < 0$ for all $z \in K \setminus L$. In fact, we have just showed that $g_{\tilde{x}}$ verifies all assumptions of Theorem \ref{thm:mainexistence}, and therefore $g_{\tilde{x}}$ has an equilibrium $\bar{z} \in K$. Equivalently, we have proved that $\bar{z} \in J_{F}(\tilde{x})$. Since $\tilde{x} \in N$ is chosen at arbitrary, the desired result is attained.
\end{proof}

%Next, we show that a resolvent of a bifunction has good properties that are suitable for applications in nonlinear analysis. Moreover, it provides the equivalence between the fixed point and equilibrium problems. First of all, let us recall the notion of firm nonexpansivity in Hadamard spaces.
%\begin{dfn}
%A mapping $T : K \subset X \to X$ is said to be \emph{firmly nonexpansive} if for any $x,y \in K$, the function $\varphi_{x,y,T} : [0,1] \to \R$, defined by
%\[
%\varphi_{x,y,T}(t) := \rho(\gamma_{x,Tx}(t),\gamma_{y,Ty}(t)), \quad \forall t \in [0,1],
%\]
%is nonincreasing.
%\end{dfn}
%Directly from the above definition, it is clear that every firmly nonexpansive mapping is nonexpansive. For instance, we have
%\[
%\rho(Tx,Ty) = \varphi_{x,y,T}(1) \leq \varphi_{x,y,T}(0) \leq \rho(x,y),
%\]
%for any $x,y \in K$. In the sequel, we shall adopt the notion $Fix(T)$ to denote the set of all fixed points of $T$.

The following proposition gives several valuable properties for a resolvent of a monotone bifunction.

\begin{prop}\label{prop:resolventPROP}
Suppose that $F$ is monotone and $\dom(J_{F}) \neq \emptyset$. Then, the following properties hold.
\begin{enumerate}[label=(\roman*)]
\item\label{cdn:singlevalued} $J_{F}$ is single-valued.
%\item\label{cdn:changeparam} If $\dom(J_{\mu f}) \supset K$ for any $\mu > 0$ and $\lambda \in [0,1)$, then
%\[
%J_{(1-\lambda)f}\left(\gamma_{x,J_{F}(x)}(\lambda)\right) = J_{F}(x), \quad \forall x \in K.
%\]
\item\label{cdn:NX} If $\dom(J_{F}) \supset K$, then $J_{F}$ is nonexpansive restricted to $K$.
%\item\label{cdn:firmlyNX} If $\dom(J_{\mu f}) \supset K$ for any $\mu > 0$, then $J_{F}$ is firmly nonexpansive restricted to $K$.
\item\label{cdn:FPcharact} If $\dom(J_{\mu f}) \supset K$ for any $\mu > 0$, then $Fix(J_{F}) = \mathcal{E}(K,F)$.
\end{enumerate}
\end{prop}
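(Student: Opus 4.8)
The plan is to establish the three parts in turn using only the monotonicity of $F$, the inequality defining the resolvent, and elementary algebra of the quasilinearization $\pair{\cdot}{\cdot}$ (its symmetry, part \ref{prop:twopair1} of Lemma \ref{prop:twopair}, and the quasilinear inequality \eqref{prop:ql-ineq}). For part \ref{cdn:singlevalued}, fix $x \in \dom(J_F)$ and let $z_1,z_2 \in J_F(x)$; substituting $y=z_2$ in the inequality defining $z_1 \in J_F(x)$ and $y=z_1$ in the one defining $z_2 \in J_F(x)$, then adding, gives
\[
F(z_1,z_2) + F(z_2,z_1) \geq \pair{\Vec{z_1x}}{\Vec{z_1z_2}} + \pair{\Vec{z_2x}}{\Vec{z_2z_1}}.
\]
The left-hand side is $\leq 0$ by monotonicity, while the right-hand side equals $\rho^2(z_1,z_2)$ (by symmetry of $\pair{\cdot}{\cdot}$ together with part \ref{prop:twopair1} of Lemma \ref{prop:twopair} applied with $u=z_1$, $v=z_2$, $z=x$, or simply by expanding both products). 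Hence $\rho^2(z_1,z_2)\leq 0$, so $z_1=z_2$; as $J_F(x)\neq\emptyset$ for $x\in\dom(J_F)$, this is exactly single-valuedness.

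For part \ref{cdn:NX}, since $\dom(J_F)\supset K$ the points $z_i := J_F(x_i)\in K$ are well-defined for $x_1,x_2\in K$ by part \ref{cdn:singlevalued}. Testing the inequality defining $z_1\in J_F(x_1)$ at $y=z_2$ and that defining $z_2\in J_F(x_2)$ at $y=z_1$, adding, and using monotonicity of $F$, yields $\pair{\Vec{z_1x_1}}{\Vec{z_1z_2}} + \pair{\Vec{z_2x_2}}{\Vec{z_2z_1}}\leq 0$. Expanding both products via the definition of $\pair{\cdot}{\cdot}$, this reads
\[
2\rho^2(z_1,z_2) + \rho^2(x_1,z_1) + \rho^2(x_2,z_2) \leq \rho^2(x_1,z_2) + \rho^2(x_2,z_1).
\]
Now apply \eqref{prop:ql-ineq} to the four points $x_1,x_2,z_1,z_2$, matched so that its left-hand side becomes $\rho^2(x_1,z_2)+\rho^2(x_2,z_1)$, to bound the right-hand side above by $\rho^2(x_1,z_1)+\rho^2(x_2,z_2)+2\rho(x_1,x_2)\rho(z_1,z_2)$; cancelling the common terms leaves $\rho^2(z_1,z_2)\leq\rho(x_1,x_2)\rho(z_1,z_2)$, hence $\rho(z_1,z_2)\leq\rho(x_1,x_2)$.

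For part \ref{cdn:FPcharact}, the hypothesis (with $\mu=1$) gives $\dom(J_F)\supset K$, so $J_F$ is single-valued on $K$ by part \ref{cdn:singlevalued}. For any $\bar x\in K$ one has $\Vec{\bar x\bar x}=\0$, hence $\pair{\Vec{\bar x\bar x}}{\Vec{\bar xy}}=0$, so the inequality defining $\bar x\in J_F(\bar x)$ collapses to ``$F(\bar x,y)\geq 0$ for all $y\in K$'', which is exactly $\bar x\in\mathcal{E}(K,F)$. Thus $\bar x\in\Fix(J_F)$ iff $\bar x\in J_F(\bar x)$ iff $\bar x\in\mathcal{E}(K,F)$, proving both inclusions simultaneously.

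The only step that requires genuine care is part \ref{cdn:NX}: one must pair the test points correctly, track the signs when expanding the two quasilinearization terms, and invoke \eqref{prop:ql-ineq} with the correct matching of the four points. Parts \ref{cdn:singlevalued} and \ref{cdn:FPcharact} then follow immediately from monotonicity and the elementary identities for $\pair{\cdot}{\cdot}$.
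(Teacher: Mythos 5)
Your proof is correct and follows essentially the same route as the paper's: sum the two defining inequalities, invoke monotonicity together with Lemma \ref{prop:twopair}\ref{prop:twopair1} for single-valuedness, expand the quasilinearizations and apply \eqref{prop:ql-ineq} for nonexpansivity, and observe $\pair{\Vec{xx}}{\Vec{xy}}=0$ for the fixed-point characterization. In fact your expansion in part \ref{cdn:NX} keeps the factors of $\tfrac12$ and the signs straight, which the paper's displayed computation slightly garbles (though its conclusion is the same).
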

\begin{proof}
\ref{cdn:singlevalued} Let $x \in \dom(J_{F})$ and suppose that $z,z' \in J_{F}(x)$. So, we get
\[
\left\{\begin{array}{l}
F(z,z') \geq \pair{\Vec{zx}}{\Vec{zz'}}, \smallskip\\
F(z',z) \geq \pair{\Vec{z'x}}{\Vec{z'z}}.
\end{array}\right.
\]
By summing up the two inequalities, applying the monotonicity of $F$ and \ref{prop:twopair1} of Lemma \ref{prop:twopair}, we obtain
\[
0 \geq F(z,z') + F(z',z) \geq \pair{\Vec{zx}}{\Vec{zz'}} + \pair{\Vec{z'x}}{\Vec{z'z}} = \rho^{2}(z,z').
\]
This shows $z = z'$.

%\ref{cdn:changeparam} Suppose that $x \in K$ and $\lambda \in [0,1)$. For simplicity, we write $u := \gamma_{x,J_{F}(x)}(\lambda) = \gamma_{J_{F}(x),x}(1-\lambda)$. Let $z = J_{(1-\lambda)f}(u)$. Applying Proposition \ref{prop:ontheway}, we get for each $y \in K$ the following:
%\begin{align*}
%0 &\leq (1-\lambda)F(z,y) - \pair{\gamma_{z,u}}{\gamma_{z,y}} \\
%& = (1-\lambda)F(z,y) - \pair{\gamma_{z,\gamma_{z,x}(1-\lambda)}}{\gamma_{z,y}} \\
%&\leq (1-\lambda)F(z,y) - (1-\lambda)\pair{\gamma_{z,x}}{\gamma_{z,y}} \\
%&= (1-\lambda)\left[F(z,y) - \pair{\gamma_{z,x}}{\gamma_{z,y}}\right].
%\end{align*}
%It follows that $F(z,y) - \pair{\gamma_{z,x}}{\gamma_{z,y}} \geq 0$, for any $y \in K$, which further implies that $z = J_{F}(x)$.

\ref{cdn:NX} Let $x,y \in K$. By the definition of $J_{F}$, we have
\[
\left\{
\begin{array}{l}
F(J_{F}(x),J_{F}(y)) - \pair{\Vec{J_{F}(x)x}}{\Vec{J_{F}(x)J_{F}(y)}} \geq 0, \medskip\\
F(J_{F}(y),J_{F}(x)) - \pair{\Vec{J_{F}(y)y}}{\Vec{J_{F}(y)J_{F}(x)}} \geq 0.
\end{array}
\right.
\]
Summing the two inequalities above yields
\begin{align*}
0 &\geq \pair{\Vec{J_{F}(x)x}}{\Vec{J_{F}(x)J_{F}(y)}} + \pair{\Vec{J_{F}(y)y}}{\Vec{J_{F}(y)J_{F}(x)}} \\
&= \left[ \rho^{2}(J_{F}(x),x) + \rho^{2}(J_{F}(x),J_{F}(y)) - \rho^{2}(x,J_{F}(y)) \right] \\
& \qquad + \left[ \rho^{2}(J_{F}(y),y) + \rho^{2}(J_{F}(x),J_{F}(y)) - \rho^{2}(y,J_{F}(x)) \right].
\end{align*}
Rearraging terms in the above inequality and apply Proposition \ref{prop:ql-ineq}, we get
\small\begin{align*}
\rho^{2}(J_{F}(x),J_{F}(y)) &\leq \frac{1}{2} \left[ \rho^{2}(x,J_{F}(y)) - \rho^{2}(y,J_{F}(x)) - \rho^{2}(x,J_{F}(x)) - \rho^{2}(y,J_{F}(y)) \right] \\
&\leq \rho(x,y)\rho(J_{F}(x),J_{F}(y)).
\end{align*}\normalsize
This shows the nonexpansivity of $J_{F}$.

%\ref{cdn:firmlyNX} Let $x,y \in K$ and $\lambda \in (0,1)$. Apply \ref{cdn:changeparam} and also \ref{cdn:NX} for the monotone bifunction $(1-\lambda) F$, we get
%\begin{align*}
%\phi_{x,y,J_{F}} (1) &= \rho(J_{F}(x),J_{F}(y))\\
%&= \rho(J_{(1-\lambda)f}(\gamma_{x,J_{F}(x)}(\lambda)),J_{(1-\lambda)f}(\gamma_{y,J_{F}(y)}(\lambda))) \\
%&\leq \rho(\gamma_{x,J_{F}(x)}(\lambda),\gamma_{y,J_{F}(y)}(\lambda)) \\
%&= \phi_{x,y,J_{F}} (\lambda).
%\end{align*}
%Since $\phi_{x,y,J_{F}}$ is convex (recall that $X$ is Auseman), the above inequality is sufficient to tell that $\phi_{x,y,J_{F}}$ is nonincreasing.

\ref{cdn:FPcharact} Let $x \in K$. Observe that
\begin{align*}
x \in Fix(J_{F}) &\iff x = J_{F}(x)\\
&\iff F(x,y) - \pair{\Vec{xx}}{\Vec{xy}} \geq 0,\, \forall y \in K\\
&\iff F(x,y) \geq 0,\, \forall y \in K \\
&\iff x \in \mathcal{E}(K,F). \tag*\qedhere
\end{align*}
\end{proof}

\section{Bifunction Rosolvents and Other Resolvents}\label{sec:resolv-compare}

In this section, we consider two special cases of equilibrium problem, where the resolvent associated to a bifunction defined in Section \ref{sec:resolvents} conincides with the resolvents designed for solving different variational problems. In particular, we deduce that our resolvent reduces to the Moreau-Yosida resolvent for a convex functional, and also to resolvents corresponding maximal monotone operators.

\subsection{Resolvents of convex functionals}

Always assume that $g : X \to \R$ is convex and lsc. The proximal of $g$ is the operator $\prox_{g} : X \to X$ given by
\[
\prox_{g} (x) := \argmin_{y \in X} \left[g(y) + \frac{1}{2}\rho^{2}(y,x) \right], \quad \forall  x \in X.
\]
Also, recall (from \cite{MR3374067,MR3691338}) that a subdifferential of $g$ is the set-valued vector field $\partial g : X \multimap X^{\ast}$ given by
\[
\partial g(x) := \{\gamma \in T_{x}X \;|\; g(y) \geq g(x) + \pair{\gamma}{\Vec{xy}}, \,\forall y \in X\},
\]
for $x \in X$. It is shown that $\dom(\partial g)$ is dense in $\dom(g)$ (in this case, $\dom(g) = X$) \cite{MR3374067}. Moreover, we have the following lemma.
\begin{lem}[\cite{MR3691338}]\label{lem:resolvent-convexfunctional}
Given $w,z \in X$, then $\Vec{zw} \in \partial g (z)$ if and only if $z = \prox_{g}(w)$.
\end{lem}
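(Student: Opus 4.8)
The plan is to unwind both implications into the defining variational inequalities and then pass between them via the quasilinearization identity. Observe first that, directly from the definition of $\pair{\cdot}{\cdot}$,
\[
\pair{\Vec{zw}}{\Vec{zy}} = \frac{1}{2}\left[\rho^{2}(z,y) + \rho^{2}(w,z) - \rho^{2}(w,y)\right], \qquad \forall y \in X.
\]
Since $\prox_{g}(w)$ is the unique minimizer of $y \mapsto g(y) + \frac{1}{2}\rho^{2}(y,w)$ on $X$, the equality $z = \prox_{g}(w)$ is equivalent to the assertion that $g(z) + \frac{1}{2}\rho^{2}(z,w) \leq g(y) + \frac{1}{2}\rho^{2}(y,w)$ for all $y \in X$; and $\Vec{zw} \in \partial g(z)$ means exactly $g(y) \geq g(z) + \pair{\Vec{zw}}{\Vec{zy}}$ for all $y \in X$ (the requirement $\Vec{zw} \in T_{z}X$ holding trivially, as $\Vec{zw} = 1\cdot\Vec{zw}$).

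For the implication $z = \prox_{g}(w) \Rightarrow \Vec{zw} \in \partial g(z)$, I would fix $y \in X$ and, for $t \in (0,1]$, set $y_{t} := \gamma_{z,y}(t)$. Minimality of $z$ gives $g(z) + \frac{1}{2}\rho^{2}(z,w) \leq g(y_{t}) + \frac{1}{2}\rho^{2}(y_{t},w)$. Estimating $g(y_{t}) \leq (1-t)g(z) + tg(y)$ by convexity of $g$ and $\rho^{2}(y_{t},w) \leq (1-t)\rho^{2}(z,w) + t\rho^{2}(y,w) - t(1-t)\rho^{2}(z,y)$ by \eqref{eqn:CN}, then cancelling the common term $(1-t)g(z) + \frac{1-t}{2}\rho^{2}(z,w)$ and dividing by $t > 0$, I obtain
\[
g(z) + \frac{1}{2}\rho^{2}(z,w) \leq g(y) + \frac{1}{2}\rho^{2}(y,w) - \frac{1-t}{2}\rho^{2}(z,y).
\]
Letting $t \dto 0$ and substituting the quasilinearization identity rearranges this into $g(y) \geq g(z) + \pair{\Vec{zw}}{\Vec{zy}}$, which is the claim.

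The converse is essentially algebraic: starting from $g(y) \geq g(z) + \pair{\Vec{zw}}{\Vec{zy}}$ for all $y \in X$ and inserting the quasilinearization identity gives
\[
g(y) + \frac{1}{2}\rho^{2}(y,w) \geq g(z) + \frac{1}{2}\rho^{2}(z,w) + \frac{1}{2}\rho^{2}(z,y) \geq g(z) + \frac{1}{2}\rho^{2}(z,w),
\]
so $z$ minimizes $y \mapsto g(y) + \frac{1}{2}\rho^{2}(y,w)$, whence $z = \prox_{g}(w)$ by uniqueness of the minimizer. The only step demanding genuine care is the first direction — specifically, arranging the geodesic interpolation and \eqref{eqn:CN} so that the surplus term is exactly $t$-homogeneous, which is what legitimizes dividing by $t$ and passing to the limit; everything else is bookkeeping with $\pair{\cdot}{\cdot}$ and $\rho^{2}$.
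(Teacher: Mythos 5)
Your proof is correct. Note that the paper itself offers no proof of this lemma --- it is quoted from \cite{MR3691338} --- so there is no internal argument to compare against; your derivation (the quasilinearization identity $\pair{\Vec{zw}}{\Vec{zy}} = \tfrac{1}{2}[\rho^{2}(z,y)+\rho^{2}(w,z)-\rho^{2}(w,y)]$, geodesic interpolation plus \eqref{eqn:CN} to extract the $t$-homogeneous surplus in the forward direction, and the purely algebraic converse) is precisely the standard route taken in the cited source, and all steps check out against the paper's definitions of $\partial g$, $T_{z}X$, and $\prox_{g}$.
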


It is immediate that minimizing this functional $g$ is a particular equilibrium problem. We make the following statement explicit only for the completeness.
\begin{lem}\label{lem:fg}
Define $F_{g} : K \times K \to \R$ by
\begin{equation}\label{eqn:fg}
F_{g}(x,y) := g(y) - g(x), \quad \forall x,y \in K.
\end{equation}
Then, we have $\mathcal{E}(K,F_{g}) = \argmin_{K} g$ and $J_{F_{g}} = \prox_{g}$. Moreover, we have $\dom(\prox_{g}) = X$.
\end{lem}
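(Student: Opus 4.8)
The plan is to verify each of the three claims in turn, reading off everything from the definitions of $F_g$, $\prox_g$, and the resolvent $J_F$.

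\emph{Step 1: $\mathcal{E}(K,F_g)=\argmin_K g$.} Unwinding the definition, $\bar x\in\mathcal{E}(K,F_g)$ means $F_g(\bar x,y)=g(y)-g(\bar x)\ge 0$ for all $y\in K$, i.e.\ $g(\bar x)\le g(y)$ for all $y\in K$, which is exactly $\bar x\in\argmin_K g$. This step is immediate.

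\emph{Step 2: $J_{F_g}=\prox_g$ on $K$, and $\dom(\prox_g)=X$.} Fix $x\in X$. By definition $z\in J_{F_g}(x)$ iff $z\in K$ and
\[
g(y)-g(z)-\pair{\Vec{zx}}{\Vec{zy}}\ge 0,\qquad\forall y\in K,
\]
i.e.\ $g(y)\ge g(z)+\pair{\Vec{zx}}{\Vec{zy}}$ for all $y\in K$; taking $K=X$ this says precisely $\Vec{zx}\in\partial g(z)$. By Lemma \ref{lem:resolvent-convexfunctional}, $\Vec{zx}\in\partial g(z)$ is equivalent to $z=\prox_g(x)$. Hence $J_{F_g}(x)=\{\prox_g(x)\}$, and since $\prox_g$ is a well-defined (single-valued, everywhere-defined) operator on a Hadamard space for convex lsc $g$, we get $\dom(\prox_g)=X$ and consequently $\dom(J_{F_g})=X$. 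The main subtlety here is purely notational: one should note that $F_g$ is monotone (indeed $F_g(x,y)+F_g(y,x)=0$) and convex in the second variable, so that the resolvent-theoretic setup is coherent, but the core identification is the one-line application of Lemma \ref{lem:resolvent-convexfunctional}. One small point to be careful about is whether Lemma \ref{lem:resolvent-convexfunctional} is stated for $g$ defined on all of $X$ versus on $K$; since the paper takes $g:X\to\R$, using $K=X$ in the resolvent inequality is legitimate and $J_{F_g}(x)\subset K=X$, so no clipping issue arises.

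\emph{Main obstacle.} There is no real obstacle; the only thing requiring a word of justification is the well-definedness of $\prox_g$ itself (existence and uniqueness of the minimizer of $y\mapsto g(y)+\tfrac12\rho^2(y,x)$ for convex lsc $g$ on a Hadamard space), which is standard (Bačák \cite{MR3047087}) and yields $\dom(\prox_g)=X$. Everything else is a direct unwinding of definitions together with Lemma \ref{lem:resolvent-convexfunctional}.
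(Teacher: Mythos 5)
Your proposal is correct and follows essentially the same route as the paper: the first identity is read off from the definitions, the identification $J_{F_g}=\prox_g$ is the same chain of equivalences through $\Vec{zx}\in\partial g(z)$ via Lemma \ref{lem:resolvent-convexfunctional}, and $\dom(\prox_g)=X$ is delegated to the standard literature (the paper cites Jost and Mayer rather than Ba\v{c}\'ak, but that is immaterial). Your explicit remark that the equivalence with $\Vec{zx}\in\partial g(z)$ uses $K=X$ is a point the paper's proof passes over silently, so if anything you are slightly more careful than the original.
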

\begin{proof}
The fact that $\mathcal{E}(K,F_{g}) = \argmin_{K} g$ is obvious. Now, Lemma \ref{lem:resolvent-convexfunctional} implies the following relations:
\begin{align*}
z = J_{F_{g}}(x) &\iff F_{g}(z,y) - \pair{\Vec{zx}}{\Vec{zy}} \geq 0, \quad\forall y \in K \\
&\iff g(y) \geq g(z) + \pair{\Vec{zx}}{\Vec{zy}}, \quad\forall y \in K \\
&\iff \Vec{zx} \in \partial g(z) \\
&\iff z = \argmin_{y \in X} \left[g(y) + \frac{1}{2}\rho^{2}(y,x)\right] = \prox_{g}(x)
\end{align*}
The fact that $\dom(\prox_{g}) = X$ was proved in \cite{MR1360608,MR1651416}.
\end{proof}

\subsection{Resolvents of monotone operators}

Following \cite{MR3691338}, the set-valued operator $A : X \multimap X^{\ast}$ is called a \emph{vector field} if $A(x) \subset T_{x}X$ for every $x \in X$. Moreover, it is said to be \emph{monotone} if $\pair{x^{\ast}}{\Vec{xy}} \leq -\pair{y^{\ast}}{\Vec{yx}}$ for every $(x,x^{\ast}),(y,y^{\ast}) \in \grp(A)$. It is said to be \emph{maximally monotone} if $A$ is monotone and $\grp(A)$ is not properly contained in a graph of another monotone vector field.

We shall make an observation in this section that the following \emph{stationary problem} (or \emph{zero point problem}) associated to a monotone set-valued vector field $A : X \multimap X^{\ast}$:
\[\label{eqn:singularity}
\text{Find a point $\bar{x} \in K$ such that $\0 \in A(\bar{x})$,}	\tag{$A^{-1}\0$}
\]
can be viewed in terms of an equilibrium problem for some bifuntion $F_{A}$. The solution set is naturally expressed by $A^{-1}\0$.

%Moreover, the resolvents associated to the vector field $S$ and the bifunction $F_{S}$ coincide when $S$ is maximal. In fact, we can show that this singularity can be equivalently viewed as a particular variational inequality as well as its corresponding Minty variational inequality. That is, we have $\mathcal{S}(A) = \mathcal{E}(X,f_{A}) = \mathcal{E}^{\ast}(X,f_{A})$, where $\mathcal{S}(A) := \{z \in X \;|\; \gamma_{z,z} \in A(z)\}$ denotes the solution set to $SP(B)$.

Beforehand, let us give a restatement of a result given by \cite{MR3691338} in the language of this paper. In fact, this states the equivalence between the singularity priblem and the Minty variational inequality of $A$.
%\begin{lem}[\cite{MR3691338}]\label{lem:singularity-vi}
%Suppose that $A : X \multimap X^{\ast}$ is a monotone set-valued vector field. Then, $\gamma_{\bar{x},\bar{x}} \in A(\bar{x})$ if and only if $\pair{\nu}{\Vec{y\bar{x}}} \leq 0$ for every $(y,\nu) \in \grp(A)$.
%\end{lem}
%\begin{proof}
%This is simply a combinational restatement of \cite[Lemma 3.5]{MR3691338} and \cite[Lemma 3.6]{MR3691338}.
%\end{proof}

\begin{prop}\label{prop:sin=prim=dual}
Suppose that $A : X \multimap X^{\ast}$ is a monotone vector field with $\dom(A) = X$. Define $F_{A} : X \times X \to \R$ by
\begin{equation}\label{eqn:fS}
F_{A}(x,y) := \sup_{\xi \in A(x)} \pair{\xi}{\Vec{xy}}, \quad \forall x,y \in X,
\end{equation}
Then, $A^{-1}\0 = \mathcal{E}(X,F_{A}) = \mathcal{E}^{\ast}(X,F_{A})$.
\end{prop}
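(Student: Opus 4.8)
The plan is to show the chain of equalities $A^{-1}\0 \subset \mathcal{E}(X,F_A) \subset \mathcal{E}^\ast(X,F_A) \subset A^{-1}\0$, working through one inclusion at a time and exploiting the quasilinearization identities from Lemma \ref{prop:twopair} together with the monotonicity of $A$.

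\textbf{Step 1: $A^{-1}\0 \subset \mathcal{E}(X,F_A)$.} Suppose $\bar x \in A^{-1}\0$, i.e. $\0 \in A(\bar x)$. Then taking $\xi = \0$ in the supremum defining $F_A(\bar x,y)$ gives $F_A(\bar x,y) \geq \pair{\0}{\Vec{\bar x y}} = 0$ for every $y \in X$, so $\bar x \in \mathcal{E}(X,F_A)$. This step is essentially immediate.

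\textbf{Step 2: $\mathcal{E}(X,F_A) \subset \mathcal{E}^\ast(X,F_A)$.} I would first observe that $F_A$ is monotone: for $x,y\in X$, if $\xi\in A(x)$ and $\eta\in A(y)$ then monotonicity of the vector field gives $\pair{\xi}{\Vec{xy}} \leq -\pair{\eta}{\Vec{yx}}$; taking suprema over $\xi\in A(x)$ and then over $\eta\in A(y)$ (noting $\dom(A)=X$ so both sets are nonempty) yields $F_A(x,y) + F_A(y,x) \leq 0$. Then the inclusion follows directly from Proposition \ref{prop:primal=dual}\ref{cdn:primal->dual}, or more simply: if $\bar x\in\mathcal{E}(X,F_A)$ then for any $y$, $F_A(y,\bar x) \leq -F_A(\bar x,y) \leq 0$.

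\textbf{Step 3: $\mathcal{E}^\ast(X,F_A) \subset A^{-1}\0$.} This is the substantive step and I expect it to be the main obstacle, since it requires producing the element $\0 \in A(\bar x)$ from the dual inequality, and this is where maximal monotonicity — or at least a careful limiting argument — must come in; in fact one likely needs $A$ to be maximally monotone here (the statement as written only says monotone, so I would either invoke that $A$ is assumed maximal elsewhere, or note the hypothesis should read ``maximal monotone''). Assume $\bar x \in \mathcal{E}^\ast(X,F_A)$, so $F_A(y,\bar x) \leq 0$, i.e. $\pair{\eta}{\Vec{y\bar x}} \leq 0$ for every $y \in X$ and every $\eta \in A(y)$. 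Equivalently $\pair{\eta}{\Vec{\bar x y}} \geq 0$ for all $(y,\eta)\in\grp(A)$. The goal is to deduce $\0\in A(\bar x)$. The idea is a Minty-type argument: fix an arbitrary $z\in X$ and, using $\dom(A) = X$, pick $\eta_t \in A(\gamma_{\bar x,z}(t))$ for $t\in(0,1]$; applying Lemma \ref{prop:twopair}\ref{prop:ontheway} to bound $\pair{\eta_t}{\Vec{\gamma_{\bar x,z}(t)\,\bar x}}$ and $\pair{\eta_t}{\Vec{\gamma_{\bar x,z}(t)\,z}}$ in terms of $\pair{\eta_t}{\Vec{\gamma_{\bar x,z}(t)\,\bar x}}$ should let one extract, after dividing by $t$ and letting $t\downarrow 0$, the inequality characterizing that the pair $(\bar x,\0)$ is monotonically related to all of $\grp(A)$; maximality of $A$ then forces $(\bar x,\0)\in\grp(A)$. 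I would look to cite the relevant Minty-variational-inequality equivalence from \cite{MR3691338} to shortcut the analytic details of this limiting step rather than reproving it. The cleanest exposition is probably to state that Steps 1–2 are elementary and that Step 3 is exactly the content of the cited result from \cite{MR3691338} transcribed into the present notation, so that the proof reduces to verifying that $F_A$ as defined here matches the bifunction used there and that $\mathcal{E}^\ast(X,F_A)$ is precisely the solution set of the Minty variational inequality for $A$.
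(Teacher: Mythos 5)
Your proposal is correct and follows essentially the same route as the paper: the easy inclusions $A^{-1}\0 \subset \mathcal{E}(X,F_{A})$ (take $\xi = \0$ in the supremum) and $\mathcal{E}(X,F_{A}) \subset \mathcal{E}^{\ast}(X,F_{A})$ (monotonicity of $F_{A}$) are verified directly, while the substantive inclusion $\mathcal{E}^{\ast}(X,F_{A}) \subset A^{-1}\0$ is delegated to the Minty-type results of \cite{MR3691338}, exactly as the paper does. Your concern that plain monotonicity might not suffice for that last step is reasonable, but the paper resolves it in the same way you propose, by citing \cite[Lemmas 3.5 and 3.6]{MR3691338}, which are invoked for monotone vector fields with $\dom(A)=X$ rather than reproving the limiting argument.
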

\begin{proof}
First, we note that the finiteness of $F_{A}$ is guaranteed readily from the monotonicity of $A$. Moreover, the fact that $\mathcal{E}(X,F_{A}) \subset \mathcal{E}^{\ast}(X,F_{A}) = A^{-1}\0$ follows from the monotonicity of $F_{A}$, \cite[Lemma 3.5]{MR3691338}, and \cite[Lemma 3.6]{MR3691338}. It therefore suffices to show that $A^{-1}\0 \subset \mathcal{E}(X,F_{A})$. So, let $\0 \in A(\bar{x})$. Then
\[
0 = \pair{\Vec{\bar{x}\bar{x}}}{\Vec{\bar{x}y}} \leq \sup_{\bar{\xi} \in A(\bar{x})} \pair{\bar{\xi}}{\Vec{\bar{x}y}} = F_{A}(\bar{x},y), \quad \forall y \in X,
\]
meaning that $\bar{x} \in \mathcal{E}(X,F_{A})$.
%
%Conversely, let $\bar{x} \in \mathcal{E}(X,f_{B})$. Recall that the monotonicity of $B$ gives
%\begin{equation}\label{eqn:mono-xbar}
%- \pair{\bar{\xi}}{\gamma_{\bar{x},y}} \geq \pair{\nu}{\gamma_{y,\bar{x}}},
%\end{equation}
%for any $\bar{\xi} \in B(\bar{x})$ and $(y,\nu) \in \grp(B)$. Taking supremum over all $\bar{\xi} \in B(\bar{x})$ in \eqref{eqn:mono-xbar} and the hypothesis that $\bar{x} \in \mathcal{E}(X,f_{B})$, we obtain
%\[
%0 \geq -f_{B}(\bar{x},y) = -\sup_{\bar{\xi} \in B(\bar{x})} \pair{\bar{x}}{\gamma_{\bar{x},y}} \geq \pair{\nu}{\gamma_{y,\bar{x}}}, \quad \forall (y,\nu) \in \grp(B).
%\]
%The conclusion follows by applying Lemma \ref{lem:singularity-vi}.
\end{proof}

Now that we have the equivalence between the three variational problems, we continue to show that their resolvents coincide, provided that $A$ is maximally monotone. Recall from \cite{MR3691338} that the resolvent operator of $A$, denoted by $R_{A}$, is defined by $R_{A}(x) := \{z \in X \,|\, \Vec{zx} \in A(z)\}$, and is single-valued.
\begin{prop}\label{prop:J=W}
Suppose that $A : X \multimap X^{\ast}$ is a maximally monotone vector field with $\dom(A) = X$, and $F_{A}$ is defined by \eqref{eqn:fS}.  Then, $J_{F_{A}} = R_{A}$.
\end{prop}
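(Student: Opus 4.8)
The plan is to show the two single-valued operators $J_{F_A}$ and $R_A$ agree pointwise by proving the double inclusion at the level of their (singleton) values. Fix $x \in X$. By Proposition \ref{prop:sin=prim=dual}, $F_A$ is monotone, so Proposition \ref{prop:resolventPROP}\ref{cdn:singlevalued} tells us $J_{F_A}$ is single-valued wherever it is defined; likewise $R_A$ is single-valued by the recalled result of \cite{MR3691338}. So it suffices to verify that $z = R_A(x)$ implies $z \in J_{F_A}(x)$, and conversely that $z \in J_{F_A}(x)$ implies $\Vec{zx} \in A(z)$ (i.e.\ $z = R_A(x)$); either inclusion, together with single-valuedness and the fact that both domains are all of $X$, forces equality.

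First I would do the easy direction $R_A(x) \subset J_{F_A}(x)$. Suppose $z := R_A(x)$, so $\Vec{zx} \in A(z)$. Then for every $y \in X$,
\[
F_A(z,y) = \sup_{\xi \in A(z)} \pair{\xi}{\Vec{zy}} \geq \pair{\Vec{zx}}{\Vec{zy}},
\]
which is exactly the defining inequality $F_A(z,y) - \pair{\Vec{zx}}{\Vec{zy}} \geq 0$ for membership in $J_{F_A}(x)$. Hence $z \in J_{F_A}(x)$.

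For the reverse direction, let $z \in J_{F_A}(x)$, so $\sup_{\xi \in A(z)} \pair{\xi}{\Vec{zy}} \geq \pair{\Vec{zx}}{\Vec{zy}}$ for all $y \in X$. I want to conclude $\Vec{zx} \in A(z)$, and this is where maximal monotonicity is essential — the inequality only controls the support function of $A(z)$, not $A(z)$ itself, so one needs to enlarge the graph of $A$ by the candidate pair $(z, \Vec{zx})$ and check monotonicity of the enlarged vector field is preserved, then invoke maximality. Concretely, take any $(w, w^\ast) \in \grp(A)$; using the definition of $F_A$ at the point $w$ together with the equilibrium characterization $A^{-1}\0 = \mathcal{E}^\ast(X,F_A)$-type argument (applied after the perturbation, i.e.\ to the bifunction $\tilde{F}_{A,x}(u,v) = F_A(u,v) - \pair{\Vec{ux}}{\Vec{uv}}$, which is monotone since $F_A$ is), one derives the monotonicity inequality $\pair{\Vec{zx}}{\Vec{zw}} \leq -\pair{w^\ast}{\Vec{wz}}$ relating the candidate pair to $(w,w^\ast)$. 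Since this holds for all $(w,w^\ast) \in \grp(A)$, the vector field obtained by adjoining $(z,\Vec{zx})$ to $\grp(A)$ is still monotone, and maximality of $A$ gives $\Vec{zx} \in A(z)$, i.e.\ $z = R_A(x)$.

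The main obstacle is precisely that last step: extracting, from the single scalar inequality $\sup_{\xi\in A(z)}\pair{\xi}{\Vec{zy}} \geq \pair{\Vec{zx}}{\Vec{zy}}$ holding for all $y$, the pairwise monotonicity estimate against an arbitrary graph point $(w,w^\ast)$. In the linear (Hilbert) setting this is the standard Minty-type trick — test with $y$ on the geodesic from $z$ toward $w$, divide by the parameter, and pass to the limit using $t\pair{\cdot}{\Vec{zw}} \le \pair{\cdot}{\Vec{z\gamma_{z,w}(t)}}$ from Lemma \ref{prop:twopair}\ref{prop:ontheway} — but in a general Hadamard space the quasilinearization is only sub-bilinear, so some care is needed to ensure the inequalities point the right way after dividing and taking limits, and to handle the supremum over $A(z)$ (which may not be attained). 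I expect this to go through by combining Lemma \ref{prop:twopair}\ref{prop:ontheway} with the monotonicity of $A$ exactly as in the proof of Proposition \ref{prop:sin=prim=dual}, essentially re-running that argument with the perturbing term $-\pair{\Vec{\cdot\, x}}{\Vec{\cdot\,\cdot}}$ carried along; alternatively, one can cite \cite[Lemma 3.5, Lemma 3.6]{MR3691338} to identify $\mathcal{E}(X,\tilde F_{A,x})$ with the solution set of the corresponding perturbed stationary problem and read off $\Vec{zx}\in A(z)$ directly.
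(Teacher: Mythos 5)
Your proposal is correct and takes essentially the same route as the paper: the easy inclusion $R_{A}(x) \in J_{F_{A}}(x)$ is proved identically, and the paper's converse is just the one-line assertion that it ``follows from the maximal monotonicity of $A$,'' which is exactly the graph-enlargement argument you spell out. One small remark: the step you flag as the main obstacle is in fact immediate and needs no Minty-type geodesic limiting --- testing the defining inequality at $y = w$ and using the monotonicity of $A$ gives $\pair{\Vec{zx}}{\Vec{zw}} \leq F_{A}(z,w) = \sup_{\xi \in A(z)}\pair{\xi}{\Vec{zw}} \leq -\pair{w^{\ast}}{\Vec{wz}}$ for every $(w,w^{\ast}) \in \grp(A)$, which is already the required pairwise monotonicity estimate.
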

\begin{proof}
Let $x \in X$ be given. Then, we have
\begin{align}\label{eqn:JS=JfS}
z = R_{A}(x) &\iff \Vec{zx} \in A(z) \nonumber\\
&\;\implies \pair{\Vec{zx}}{\Vec{zy}} \leq \sup_{\eta \in A(z)} \pair{\eta}{\Vec{zy}}, \;\forall y \in X \\
&\iff z \in J_{F_{A}}(x). \nonumber
\end{align}
The converse of \eqref{eqn:JS=JfS} follows from the maximal monotonicity of $A$.
\end{proof}

\section{Proximal Algorithms}\label{sec:convergence}

It is very natural to ask about the proximal algorithm after defining a proper resolvent operator. Let us officially define the proximal algorithm for a bifunction $F : K \times K \to \R$ with $\dom(\mu F) \supset K$ for all $\mu > 0$. Let $(\lambda_{k}) \subset (0,\infty)$ be the step-size sequence. The \emph{proximal algorithm} with step sizes $(\lambda_{k})$ started at an initial guess $x^{0} \in K$ is the sequence $(x^{k}) \subset K$ generated by
\[\label{eqn:proximal}
x^{k} := J_{\lambda_{k}F}(x^{k-1}), \quad \forall k \in \N.	\tag{Prox}
\]

\subsection{Convergence in functional values}
Before we continue any further, let us give a small remark on the error measurement of this proximal algorithm. For instance, at each $k \in \N$, we can use the definition of the resolvent $J_{F}$ and the Cauchy-Schwarz inequality to deduce:
\begin{equation}\label{eqn:estimate0}
\inf_{y \in K} F(x^{k+1},y) \geq -\frac{1}{\lambda_{k}}\rho(x^{k+1},x^{k})\sup_{y \in K} \rho(x^{k+1},y),
\end{equation}
which is useful when as a stopping criterion for the convergence, especially when the rate of asymptotic regularity is known. Note that the above estimate \eqref{eqn:estimate0} will later be made sharp and precise to guarantee the convergence of the proximal algorithm (see Theorem \ref{thm:convergence1} and \ref{thm:convergence2}, for instance). If the asymptotic regularity does not hold or the rate is unknown but $K$ is bounded, we otherwise have
\begin{equation}\label{eqn:estimate1}
\inf_{y \in K} F(x^{k+1},y) \geq - \frac{1}{\lambda_{k}}\diam(K)^{2},
\end{equation}
which leads directly to the convergence of functional value presented in the next theorem.
\begin{thm}
Suppose that $\dom(J_{\mu F}) \supset K$ for all $\mu > 0$ and $K$ is bounded closed convex. If $\lambda_{k} \tendsto \infty$, then the proximal algorithm generates a sequence $(x^{k}) \subset K$ such that $\lim_{k} F(x^{k},y) = 0$ for any $y \in K$.
\end{thm}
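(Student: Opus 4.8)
The plan is to combine the one-sided estimate \eqref{eqn:estimate1} with a matching upper bound that makes $F(x^k,y)\to 0$ a genuine two-sided limit. The lower bound is already essentially in hand: since $x^{k+1}=J_{\lambda_k F}(x^k)$, the definition of the resolvent gives $F(x^{k+1},y)-\frac{1}{\lambda_k}\pair{\Vec{x^{k+1}x^k}}{\Vec{x^{k+1}y}}\ge 0$ for all $y\in K$, and since $K$ is bounded the Cauchy--Schwarz inequality (Lemma \ref{prop:twopair}) yields
\[
F(x^{k+1},y)\ \ge\ -\frac{1}{\lambda_k}\rho(x^{k+1},x^k)\rho(x^{k+1},y)\ \ge\ -\frac{1}{\lambda_k}\diam(K)^2,
\]
which tends to $0$ as $\lambda_k\to\infty$. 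So $\liminf_k F(x^k,y)\ge 0$ for every $y\in K$.

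For the upper bound, first I would record that each iterate is itself an equilibrium of the perturbed bifunction $\tilde F_{x^{k-1}}^{\lambda_k}$, i.e. $x^k\in\mathcal E(K,\lambda_k\tilde F_{x^{k-1}})$; applying this with $y=x^{k-1}$ gives $\lambda_k F(x^k,x^{k-1})\ge \pair{\Vec{x^k x^{k-1}}}{\Vec{x^k x^{k-1}}}=\rho^2(x^k,x^{k-1})\ge 0$. Next, using monotonicity of $F$ (which is part of the standing hypotheses guaranteeing $J_{\mu F}$ is well-defined and single-valued) together with the equilibrium inequality for $x^k$ at a fixed $y$, I would aim to show $F(x^k,y)$ is actually bounded above by a quantity of order $\frac{1}{\lambda_k}$ as well. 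Concretely, monotonicity gives $F(x^k,y)\le -F(y,x^k)$, and then I would bound $F(y,x^k)$ from below: since $x^k=J_{\lambda_k F}(x^{k-1})$ we may also invoke convexity of $F$ in the second variable along the geodesic $\gamma_{x^k,y}$ to run the same argument as in Proposition \ref{prop:primal=dual}\ref{cdn:dual->primal}, obtaining $F(x^k,y)\le \frac{1}{\lambda_k}\pair{\Vec{x^k x^{k-1}}}{\Vec{x^k y}}\le \frac{1}{\lambda_k}\rho(x^k,x^{k-1})\diam(K)\le\frac{1}{\lambda_k}\diam(K)^2$. Letting $k\to\infty$ then forces $\limsup_k F(x^k,y)\le 0$, and combining with the liminf bound gives $\lim_k F(x^k,y)=0$.

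The main obstacle is the upper estimate: the lower bound \eqref{eqn:estimate1} is immediate, but turning it into a two-sided statement requires extracting from the resolvent identity an inequality of the form $F(x^k,y)\le \frac{1}{\lambda_k}\pair{\Vec{x^k x^{k-1}}}{\Vec{x^k y}}$, and it is not quite automatic that $\pair{\Vec{x^k x^{k-1}}}{\Vec{x^k y}}$ is controlled by $\diam(K)^2$ — one needs the Cauchy--Schwarz bound $\abs{\pair{\Vec{uv}}{\Vec{uz}}}\le\rho(u,v)\rho(u,z)$, which does hold in $\CAT(0)$ spaces and follows from the quasilinearization inequality in Proposition \ref{prop:ql-ineq}, so this is where the boundedness of $K$ is genuinely used on both sides. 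A secondary subtlety is that the upper-bound derivation leans on convexity of $F$ in the second variable and monotonicity; these are implicit in the hypothesis ``$\dom(J_{\mu F})\supset K$ for all $\mu>0$'' being meaningful, so I would state them explicitly if the ambient standing assumptions of the section do not already supply them, and otherwise simply cite the earlier well-definedness theorem. No compactness or Fejér argument is needed here — the conclusion is only about functional values, not about convergence of the iterates themselves.
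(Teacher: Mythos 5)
Your lower-bound half is precisely the paper's entire argument: the published proof is the single line ``Follows from \eqref{eqn:estimate1}'', i.e.\ $F(x^{k+1},y)\ge -\diam(K)^{2}/\lambda_{k}$ for all $y\in K$, which yields $\liminf_{k}F(x^{k},y)\ge 0$ uniformly in $y$ and hence, using $F(x,x)=0$, that $\inf_{y\in K}F(x^{k},y)\to 0$. Up to that point you and the paper coincide.

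The upper-bound half of your proposal has a genuine gap, and in fact the inequality you are driving at cannot hold. The defining property of the resolvent is $F(x^{k},y)\ge \frac{1}{\lambda_{k}}\pair{\Vec{x^{k}x^{k-1}}}{\Vec{x^{k}y}}$ for all $y\in K$; your target $F(x^{k},y)\le \frac{1}{\lambda_{k}}\pair{\Vec{x^{k}x^{k-1}}}{\Vec{x^{k}y}}$ is the reverse inequality, so the two together would force equality for every $y$, which no reasonable bifunction satisfies. The chain you sketch does not produce it either: monotonicity gives $F(x^{k},y)\le -F(y,x^{k})$, so you would need a \emph{lower} bound on $F(y,x^{k})$ of order $-1/\lambda_{k}$, whereas the resolvent inequality and its Minty dual (the mechanism of Proposition \ref{prop:primal=dual}\ref{cdn:dual->primal}) only supply \emph{upper} bounds on $F(y,x^{k})$ --- the wrong direction. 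More decisively, the pointwise two-sided limit is simply false: take $F=F_{g}$ as in Lemma \ref{lem:fg} with $g$ convex, lsc and nonconstant (e.g.\ $g=\rho^{2}(\cdot,p)$) on a bounded $K$. All hypotheses of the theorem hold, $J_{\lambda F_{g}}=\prox_{\lambda g}$, and \eqref{eqn:estimate1} gives $g(x^{k})\to\inf_{K}g$, whence $F_{g}(x^{k},y)=g(y)-g(x^{k})\to g(y)-\inf_{K}g>0$ for every non-minimizing $y$. The statement must therefore be read (as the heading ``Convergence in functional values'' and the one-line proof indicate) as $\inf_{y\in K}F(x^{k},y)\to 0$, equivalently the uniform liminf bound; your proof of that weaker assertion is correct and is the paper's proof, and the additional upper-bound machinery should be removed rather than repaired.
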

\begin{proof}
Follows from \eqref{eqn:estimate1}.
\end{proof}

%
%
%\subsection{Constant step size}
%The result is quite immediate if the proximal algorithm \eqref{eqn:proximal} takes a constant step size $\lambda_{k} = \mu$ over every iterations. In fact, we have the following convergence results for Picard's iteration of firmly nonexpansive mapping.
%\begin{thm}
%If $T : K \to K$ is firmly nonexpansive with $Fix(T) \neq \emptyset$. Then, the sequence $(T^{k}x)$ is $\Delta$-convergent to a fixed point of $T$, for any $x \in K$.
%\end{thm}
%\begin{proof}
%The $\Delta$-convergence was proved in UCW-hyperbolic spaces in \cite{MR3206460}, and since a $\CAT(0)$ space is UCW-hyperbolic (proved in \cite{MR2273533}), the result follows immediately.
%\end{proof}
%
%Taking $T := J_{\mu f}$, with $\mu > 0$, we get the following consequence.
%\begin{cor}\label{cor:constantstepPROX}
%Suppose that $F : K \times K \to \R$ is monotone with $\mathcal{E}(K,F) \neq \emptyset$, $\mu > 0$, and $\dom(J_{\mu f}) \supset K$.  Then, the proximal algorithm with constant step size $\mu$ is $\Delta$-convergent to an equilibrium of $F$, for every initial start $x^{0} \in K$.
%\end{cor}

\subsection{Convergence of Proximal Algorithms}

Here, we provide a convergence theorem for proximal algorithm \eqref{eqn:proximal} for step sizes $(\lambda_{k})$. To prove the convergence, we first show the following `obtuse angle' property, which gives relationship between an arbitrary point $\bar{x} \in X$, the perturbed equilibrium $\tilde{x} = J_{\mu F} (\bar{x}) \in \mathcal{E}(K,\widetilde{\mu F}_{\bar{x}})$, and the exact equilibrium $x^{\ast} \in \mathcal{E}(K,F)$.

\begin{lem}\label{lem:obtuse}
Assume that $F$ is monotone. Let $\bar{x} \in X$, $\mu > 0$, $\tilde{x} \in \mathcal{E}(K,\widetilde{\mu F}_{\bar{x}})$ and $x^{\ast} \in \mathcal{E}(K,F)$, then $\pair{\Vec{\tilde{x}\bar{x}}}{\Vec{\tilde{x}x^{\ast}}} \leq 0$.
\end{lem}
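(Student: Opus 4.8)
The plan is to extract the defining variational inequalities for $\tilde{x}$ and $x^{\ast}$, feed each the "right" test point, and combine them using monotonicity of $F$ together with part \ref{prop:twopair1} of Lemma \ref{prop:twopair}. Since $\tilde{x} \in \mathcal{E}(K,\widetilde{\mu F}_{\bar{x}})$, the definition of the perturbed bifunction $\widetilde{\mu F}_{\bar{x}}$ gives, for every $y \in K$,
\[
\mu F(\tilde{x},y) - \pair{\Vec{\tilde{x}\bar{x}}}{\Vec{\tilde{x}y}} \geq 0.
\]
First I would set $y = x^{\ast} \in K$ in this inequality, which yields $\pair{\Vec{\tilde{x}\bar{x}}}{\Vec{\tilde{x}x^{\ast}}} \leq \mu F(\tilde{x},x^{\ast})$. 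So it suffices to show $F(\tilde{x},x^{\ast}) \leq 0$.

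Next I would use that $x^{\ast} \in \mathcal{E}(K,F)$, so $F(x^{\ast},y) \geq 0$ for all $y \in K$; in particular $F(x^{\ast},\tilde{x}) \geq 0$ since $\tilde{x} \in K$. Now invoke the monotonicity of $F$: $F(\tilde{x},x^{\ast}) + F(x^{\ast},\tilde{x}) \leq 0$, hence $F(\tilde{x},x^{\ast}) \leq -F(x^{\ast},\tilde{x}) \leq 0$. Combining with the previous display gives $\pair{\Vec{\tilde{x}\bar{x}}}{\Vec{\tilde{x}x^{\ast}}} \leq \mu F(\tilde{x},x^{\ast}) \leq 0$, which is exactly the claim (the factor $\mu > 0$ is harmless). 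Note that part \ref{prop:twopair1} of Lemma \ref{prop:twopair} is not even strictly needed in this streamlined route, though it could be used to produce a sharper estimate of the form $\pair{\Vec{\tilde{x}\bar{x}}}{\Vec{\tilde{x}x^{\ast}}} \leq -\mu F(x^{\ast},\tilde{x})$ directly.

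There is essentially no obstacle here: the lemma is a clean algebraic consequence of the three defining inequalities plus monotonicity, and the only thing to be careful about is plugging in the correct test points (feeding $x^{\ast}$ to the inequality for $\tilde{x}$, and $\tilde{x}$ to the inequality for $x^{\ast}$). The one mild subtlety worth a sentence in the writeup is that both $\tilde{x}$ and $x^{\ast}$ lie in $K$, so each is a legitimate test point for the other's variational inequality; this is guaranteed since $J_{\mu F}$ maps into $K$ and $\mathcal{E}(K,F) \subset K$ by definition. I would present the computation as a short two- or three-line display chaining the inequalities above.
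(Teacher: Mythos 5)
Your proposal is correct and follows essentially the same route as the paper's proof: test the perturbed equilibrium inequality for $\tilde{x}$ at $y = x^{\ast}$ to get $\pair{\Vec{\tilde{x}\bar{x}}}{\Vec{\tilde{x}x^{\ast}}} \leq \mu F(\tilde{x},x^{\ast})$, then use monotonicity together with $x^{\ast} \in \mathcal{E}(K,F)$ to conclude $F(\tilde{x},x^{\ast}) \leq 0$. Your observation that Lemma 2.1(i) is not actually needed matches the paper, which likewise does not invoke it.
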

\begin{proof}
Since $\tilde{x} \in \mathcal{E}(K,\tilde{F}_{\bar{x}})$, we have
\[
0 \leq \widetilde{\mu F}_{\bar{x}} (\tilde{x},x^{\ast}) = \mu F(\tilde{x},x^{\ast}) - \pair{
\Vec{\tilde{x}\bar{x}}}{\Vec{\tilde{x}x^{\ast}}},
\]
which implies that $\pair{
\Vec{\tilde{x}\bar{x}}}{\Vec{\tilde{x}x^{\ast}}} \leq \mu F(\tilde{x},x^{\ast})$. Now, since $x^{\ast} \in \mathcal{E}(K,F)$ and $F$ is monotone, we get $F(y,x^{\ast}) \leq 0,\, \forall y \in K$, and particularly $F(\tilde{x},x^{\ast}) \leq 0$. We therefore have $\pair{
\Vec{\tilde{x}\bar{x}}}{\Vec{\tilde{x}x^{\ast}}} \leq 0$.
\end{proof}

\begin{thm}\label{thm:convergence1}
Suppose that $F$ is monotone with $\mathcal{E}(K,F) \neq \emptyset$, $\Delta$-usc in the first variable, and that $\dom(J_{\mu F}) \supset K$ for all $\mu > 0$. Let $(\lambda_{k})$ be bounded away from $0$. Then the proximal algorithm \eqref{eqn:proximal} is $\Delta$-convergent to an element in $\mathcal{E}(K,F)$ for any initial start $x^{0} \in K$.
\end{thm}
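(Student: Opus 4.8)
The plan is to establish Fej\'er convergence of the sequence $(x^{k})$ with respect to the solution set $\mathcal{E}(K,F)$, and then invoke Proposition \ref{prop:Fejer} together with the $\Delta$-upper semicontinuity hypothesis to conclude. First I would fix an arbitrary $x^{\ast} \in \mathcal{E}(K,F)$ and apply Lemma \ref{lem:obtuse} with $\bar{x} = x^{k}$, $\mu = \lambda_{k+1}$, and $\tilde{x} = x^{k+1} = J_{\lambda_{k+1}F}(x^{k})$, which gives $\pair{\Vec{x^{k+1}x^{k}}}{\Vec{x^{k+1}x^{\ast}}} \leq 0$. The idea is to convert this ``obtuse angle'' inequality into a monotonicity estimate on $\rho(x^{k},x^{\ast})$. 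Expanding the quasilinearization via its definition yields
\[
\rho^{2}(x^{k+1},x^{k}) + \rho^{2}(x^{k+1},x^{\ast}) - \rho^{2}(x^{k},x^{\ast}) = 2\pair{\Vec{x^{k+1}x^{k}}}{\Vec{x^{k+1}x^{\ast}}} \leq 0,
\]
so that $\rho^{2}(x^{k+1},x^{\ast}) \leq \rho^{2}(x^{k},x^{\ast}) - \rho^{2}(x^{k+1},x^{k}) \leq \rho^{2}(x^{k},x^{\ast})$. This proves $(x^{k})$ is Fej\'er convergent to $\mathcal{E}(K,F)$, and moreover telescoping gives $\sum_{k} \rho^{2}(x^{k+1},x^{k}) < \infty$, hence $\rho(x^{k+1},x^{k}) \to 0$ (asymptotic regularity).

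Next I would apply Proposition \ref{prop:Fejer}: the sequence is bounded, so by Proposition \ref{prop:deltaconvergence}\ref{cdn:deltacompact} it has $\Delta$-accumulation points, and it suffices to show every such point lies in $\mathcal{E}(K,F)$; then part (iii) of Proposition \ref{prop:Fejer} delivers $\Delta$-convergence to an element of $\mathcal{E}(K,F)$. So let $\bar{x}$ be a $\Delta$-limit of a subsequence $(x^{k_{j}})$. By closedness and convexity of $K$ (Proposition \ref{lem:closed+convex=Deltaclosed}), $\bar{x} \in K$. Fix $y \in K$ and use the defining inequality of the resolvent at step $k_{j}+1$: for $\mu_{j} := \lambda_{k_{j}+1}$,
\[
\mu_{j} F(x^{k_{j}+1},y) \geq \pair{\Vec{x^{k_{j}+1}x^{k_{j}}}}{\Vec{x^{k_{j}+1}y}}.
\]
By Cauchy--Schwarz the right side is bounded below by $-\rho(x^{k_{j}+1},x^{k_{j}})\rho(x^{k_{j}+1},y)$; since $(x^{k})$ is bounded and $\rho(x^{k_{j}+1},x^{k_{j}}) \to 0$, dividing by $\mu_{j} \geq \inf_{k}\lambda_{k} > 0$ gives $\liminf_{j} F(x^{k_{j}+1},y) \geq 0$. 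Finally, asymptotic regularity ensures $x^{k_{j}+1}$ also $\Delta$-converges to $\bar{x}$, so $\Delta$-upper semicontinuity of $x \mapsto F(x,y)$ yields $F(\bar{x},y) \geq \limsup_{j} F(x^{k_{j}+1},y) \geq 0$. As $y \in K$ was arbitrary, $\bar{x} \in \mathcal{E}(K,F)$.

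The main obstacle is the last step, specifically verifying that $\Delta$-convergence of $(x^{k_{j}})$ transfers to $(x^{k_{j}+1})$ and that $\Delta$-usc (defined via $\Delta$-closedness of the epigraph) actually gives the pointwise inequality $\limsup_j F(x^{k_j+1},y) \leq F(\bar x,y)$ along a $\Delta$-convergent sequence --- this requires a small lemma that a $\Delta$-usc function is $\Delta$-sequentially-usc, which follows by the standard epigraph argument (if $F(x^{k_j+1},y) \to \alpha$ along a further subsequence, then $(x^{k_j+1},\alpha)$ lies in the $\Delta$-closed epigraph of $-F(\cdot,y)$... handled carefully with signs). The transfer of the $\Delta$-limit is routine: since $\rho(x^{k_j+1},x^{k_j}) \to 0$, the two sequences have the same asymptotic radius function, hence the same $\Delta$-limit. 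Everything else is a direct application of the already-established machinery.
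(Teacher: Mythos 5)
Your proof is correct and follows essentially the same route as the paper's: Fej\'er monotonicity plus asymptotic regularity via Lemma \ref{lem:obtuse}, the resolvent inequality combined with Cauchy--Schwarz and the lower bound on $(\lambda_{k})$, $\Delta$-upper semicontinuity in the first variable, and Proposition \ref{prop:Fejer}(iii). The only cosmetic differences are that the paper obtains the Fej\'er inequality from the nonexpansivity of $J_{\lambda_{k}F}$ (Proposition \ref{prop:resolventPROP}) and uses Lemma \ref{lem:obtuse} only for asymptotic regularity, and that it writes the resolvent inequality at index $k_{j}$ (with perturbation point $x^{k_{j}-1}$), which sidesteps the extra step you correctly supply of transferring the $\Delta$-limit to the shifted subsequence $(x^{k_{j}+1})$.
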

\begin{proof}
Let $x^{0} \in K$ be an initial start and let $x^{\ast} \in \mathcal{E}(K,F)$. We can simply see that
\begin{align*}
\rho(x^{\ast},x^{k+1}) = \rho(J_{\lambda_{k}F}(x^{\ast}),J_{\lambda_{k}F}(x^{k})) \leq \rho(x^{\ast},x^{k}),
\end{align*}
which implies that $(x^{k})$ is Fej\'er convergent with respect to $\mathcal{E}(K,F)$. In view of Proposition \ref{prop:Fejer}, the real sequence $\rho(x^{k},x^{\ast})$ is bounded, and hence converges to some $\xi \geq 0$. Lemma \ref{lem:obtuse} implies that
\[
\rho^{2}(x^{k+1},x^{k}) \leq \rho^{2}(x^{k},x^{\ast}) - \rho^{2}(x^{k+1},x^{\ast}).
\]
Passing $k \tendsto \infty$, we get $\lim_{k} \rho(x^{k+1},x^{k}) = 0$.

Now, suppose that $\hat{x} \in K$ is a $\Delta$-accumulation point of the sequence $(x^{k})$, and also $(x^{k_{j}}) \subset (x^{k})$ a subsequence such that $x^{k_{j}} \overset{\Delta}{\tendsto} \hat{x}$. Let $y \in K$. By the construction \eqref{eqn:proximal}, we have the following inequalities for any $j \in \N$:
\begin{equation}\label{eqn:preconvergence}
F(x^{k_{j}},y) \geq \frac{1}{\lambda_{k_{j}}}\pair{\Vec{x^{k_{j}}x^{k_{j}-1}}}{\Vec{x^{k_{j}}y}} \geq - \frac{1}{\lambda_{k_{j}}}\rho(x^{k_{j}},x^{k_{j}-1})\rho(x^{k_{j}},y).
\end{equation}
Recall that $(x^{k})$ is bounded (in view of Proposition \ref{prop:Fejer}) and $(\lambda_{k})$ is bounded away from $0$. Then \eqref{eqn:preconvergence} gives
\begin{equation}\label{eqn:preconvergence2}
F(x^{k_{j}},y) \geq -M\rho(x^{k_{j}},x^{k_{j}-1}),
\end{equation}
for some $M > 0$.
Passing $j \tendsto \infty$ in \eqref{eqn:preconvergence2} and apply the $\Delta$-upper semiconinuity of $F(\cdot,y)$, we obtain
%\[
%-F(y,\hat{x}) \geq \limsup_{j \tendsto \infty} F(x^{k_{j}},y) \geq -M\lim_{j \tendsto \infty} \rho(x^{k_{j}},x^{k_{j}-1}) = 0.
%\]
%That is, $F(y,\hat{x}) \leq 0$.
\[
F(\hat{x},y) \geq \limsup_{j \tendsto \infty} F(x^{k_{j}},y) \geq -M\lim_{j \tendsto \infty} \rho(x^{k_{j}},x^{k_{j}-1}) = 0.
\]
Since $y \in K$ is chosen arbitrarily, we conclude that $\hat{x} \in \mathcal{E}(K,F)$. Therefore, every $\Delta$-accumulation point of $(x^{k})$ solves $EP(K,F)$. By Proposition \ref{prop:Fejer}, the sequence $(x^{k})$ is $\Delta$-convergent to an element in $\mathcal{E}(K,F)$.
\end{proof}

\begin{cor}[\cite{MR3047087,MR3691338}]
Suppose that $g : X \to \R$ is convex and lsc, and that $\argmin g \neq \emptyset$. Then, the proximal algorithm given by
\[
\left\{
\begin{array}{l}
x^{0} \in X, \medskip\\
x^{k} := \prox_{\lambda_{k}g}(x^{k-1}), \quad \forall k \in \N,
\end{array}
\right.
\]
is $\Delta$-convergent to a minimizer of $g$, whenever $(\lambda_{k}) \subset (0,\infty)$ is bounded away from $0$.
\end{cor}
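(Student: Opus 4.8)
The plan is to obtain the statement as a direct specialization of Theorem \ref{thm:convergence1}, using the identification provided by Lemma \ref{lem:fg}. Take $K=X$ and consider the bifunction $F_{g}(x,y):=g(y)-g(x)$ as in \eqref{eqn:fg}. By Lemma \ref{lem:fg} we have $\mathcal{E}(X,F_{g})=\argmin g$, $J_{F_{g}}=\prox_{g}$, and $\dom(\prox_{g})=X$. Since $\mu g$ is again convex and lsc for every $\mu>0$, and since $\mu F_{g}=F_{\mu g}$, applying Lemma \ref{lem:fg} to $\mu g$ gives $J_{\mu F_{g}}=J_{F_{\mu g}}=\prox_{\mu g}$ together with $\dom(J_{\mu F_{g}})=X\supset K$. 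In particular, for any step-size sequence $(\lambda_{k})$ the iteration \eqref{eqn:proximal} associated with $F_{g}$ is exactly $x^{k}=\prox_{\lambda_{k}g}(x^{k-1})$, which is the algorithm in the statement.

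It then remains to check that $F_{g}$ satisfies the hypotheses of Theorem \ref{thm:convergence1}. Monotonicity is immediate since $F_{g}(x,y)+F_{g}(y,x)=0\leq 0$ for all $x,y\in X$. The solution set $\mathcal{E}(X,F_{g})=\argmin g$ is nonempty by assumption, and $(\lambda_{k})$ is assumed bounded away from $0$. The only condition needing a short argument is that $F_{g}$ is $\Delta$-usc in the first variable, i.e.\ that $x\mapsto -g(x)$ is $\Delta$-usc, equivalently that $g$ is $\Delta$-lsc. I would argue this through sublevel sets: for each $\alpha\in\R$ the set $\lev_{g}(\alpha)=\{x\in X\st g(x)\leq\alpha\}$ is closed (by lower semicontinuity of $g$) and convex (by convexity of $g$), hence $\Delta$-closed by Proposition \ref{lem:closed+convex=Deltaclosed}. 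If $(x^{k_{j}})$ is $\Delta$-convergent to $\hat{x}$ with $g(x^{k_{j}})\to\ell<g(\hat{x})$, pick $\alpha$ with $\ell<\alpha<g(\hat{x})$; then $x^{k_{j}}\in\lev_{g}(\alpha)$ for all large $j$, and $\Delta$-closedness forces $\hat{x}\in\lev_{g}(\alpha)$, contradicting $g(\hat{x})>\alpha$. Hence $g$ is $\Delta$-lsc, which is the required $\Delta$-upper semicontinuity of $F_{g}(\cdot,y)$ for each $y$.

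With all hypotheses verified, Theorem \ref{thm:convergence1} applies to $F_{g}$ and yields that the sequence $(x^{k})$ generated by \eqref{eqn:proximal} is $\Delta$-convergent to an element of $\mathcal{E}(X,F_{g})=\argmin g$, which is precisely the assertion. I do not expect a genuine obstacle here: the substance is entirely contained in the already-established Theorem \ref{thm:convergence1} and Lemma \ref{lem:fg}, and the only nontrivial verification — $\Delta$-lower semicontinuity of a convex lsc functional — is handled by Proposition \ref{lem:closed+convex=Deltaclosed} as sketched above.
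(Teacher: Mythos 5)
Your proposal is correct and follows essentially the same route as the paper: reduce to Theorem \ref{thm:convergence1} via Lemma \ref{lem:fg}, note monotonicity of $F_{g}$ is trivial, and obtain the $\Delta$-upper semicontinuity of $F_{g}(\cdot,y)$ from the $\Delta$-lower semicontinuity of $g$ via Proposition \ref{lem:closed+convex=Deltaclosed}. Your sublevel-set argument for the $\Delta$-lsc of $g$ is in fact a more carefully targeted version of the paper's one-line appeal to that proposition (which speaks of the epigraph of $F_{g}(x,\cdot)$, i.e.\ the second variable, whereas the condition needed concerns the first).
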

\begin{proof}
Consider the bifunction $F_{g}$ as defined in Lemma \ref{lem:fg}. The monotonicity of $F_{g}$ is immediate, and the fact that $F_{g}$ is $\Delta$-usc in the first variable follows by applying Lemma \ref{lem:closed+convex=Deltaclosed} to the epigraph of $F_{g}(x,\cdot)$ at each $x \in X$. The convergence is then a consequence of Theorem \ref{thm:convergence1} applied to $F_{g}$, where the remaining requirements of $F_{g}$ follows from Lemma \ref{lem:fg}.
\end{proof}

Let us look closer at the assumptions of Theorem \ref{thm:convergence1}, as well as the bifunction $F_{A}$ given by \eqref{eqn:fS} associated to some maximal monotone vector field $A$. One may notice that the $\Delta$-usc of in the first variable of $F_{A}$ is irrelevant. This motivates us to find the right mechanism to link between proximal algorithms for equilibrium problems and for monotone vector fields. Fortunately enough, we can deduce another convergence criteria that is capable to include \cite[Theorem 5.2]{MR3691338} as a certain special case. To do this, we need the following notion of skewed $\Delta$-semicontinuity of $F$.
\begin{dfn}
A bifunction $F : K \times K \to \R$ is said to be \emph{skewed $\Delta$-upper semicontinuous} (for short, \emph{skewed $\Delta$-usc}) if $-F(y,x^{\ast}) \geq \limsup_{k} F(x^{k},y)$ for all $y \in K$, whenever $(x^{k}) \subset K$ is $\Delta$-convergent to $x^{\ast} \in K$.
\end{dfn}
\begin{rmk}
It is clear that if $F$ is monotone and $\Delta$-usc in the first variable, then $F$ is skewed $\Delta$-usc.
\end{rmk}

The next lemma shows that we can deduce that $F_{A}$ is skewed $\Delta$-usc for a monotone vector field $A : X \multimap X^{\ast}$.
\begin{lem}\label{lem:fB-skewedUSC}
Suppose that $A : X \multimap X^{\ast}$ is a monotone vector field with $\dom(A) = X$, and $F_{A}$ is defined by \eqref{eqn:fS}. If $X$ is reflexive, then $F_{A}$ is skewed $\Delta$-usc.
\end{lem}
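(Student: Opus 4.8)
The plan is to pass from $\Delta$-convergence to the much stronger $w$-convergence — and this is precisely where the reflexivity hypothesis on $X$ is spent — and then to reduce the statement, via monotonicity of $A$, to a ``weak lower semicontinuity'' of $z \mapsto F_{A}(y,z)$, which in turn follows from a pointwise identity for the quasilinearization. Concretely, let $(x^{k}) \subset X$ be $\Delta$-convergent to $x^{\ast}$ (so in particular bounded). Since $X$ is reflexive, Proposition \ref{prop:delta-iff-weak} upgrades this to $w$-convergence, i.e. $\lim_{k} \pair{S}{\Vec{x^{\ast}x^{k}}} = 0$ for every $S \in T_{x^{\ast}}X$. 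Fix $y \in X$; we must show $-F_{A}(y,x^{\ast}) \geq \limsup_{k} F_{A}(x^{k},y)$.

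\emph{Step 1 (monotonicity reduction).} For any $\xi \in A(x^{k})$ and $\eta \in A(y)$ — both sets are nonempty since $\dom(A)=X$ — monotonicity of $A$ gives $\pair{\xi}{\Vec{x^{k}y}} \leq -\pair{\eta}{\Vec{yx^{k}}}$. Taking the supremum over $\xi \in A(x^{k})$ and then the infimum over $\eta \in A(y)$ yields $F_{A}(x^{k},y) \leq -F_{A}(y,x^{k})$ for every $k$ (note $F_A$ is finite by monotonicity of $A$, as used for Proposition \ref{prop:sin=prim=dual}). It therefore suffices to prove $\liminf_{k} F_{A}(y,x^{k}) \geq F_{A}(y,x^{\ast})$.

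\emph{Step 2 (weak continuity of the pairing).} I would show that for each fixed $\eta \in T_{y}X$, writing $\eta = s\Vec{yz_{0}}$ with $s \geq 0$, one has $\lim_{k}\pair{\eta}{\Vec{yx^{k}}} = \pair{\eta}{\Vec{yx^{\ast}}}$. Expanding both pairings with the definition of $\pair{\cdot}{\cdot}$ and substituting the identity $\rho^{2}(p,x^{k}) = \rho^{2}(x^{\ast},x^{k}) + \rho^{2}(p,x^{\ast}) - 2\pair{\Vec{x^{\ast}p}}{\Vec{x^{\ast}x^{k}}}$ (itself just the expansion of $\pair{\Vec{x^\ast p}}{\Vec{x^\ast x^k}}$), valid for $p \in \{y,z_{0}\}$, one obtains
\[
\pair{\Vec{yz_{0}}}{\Vec{yx^{k}}} - \pair{\Vec{yz_{0}}}{\Vec{yx^{\ast}}} = \pair{\Vec{x^{\ast}z_{0}}}{\Vec{x^{\ast}x^{k}}} - \pair{\Vec{x^{\ast}y}}{\Vec{x^{\ast}x^{k}}}.
\]
Since $\Vec{x^{\ast}z_{0}}, \Vec{x^{\ast}y} \in T_{x^{\ast}}X$, both terms on the right vanish as $k \to \infty$ by the $w$-convergence recorded above, which proves the claim. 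Consequently, for any fixed $\eta \in A(y)$ we get $\liminf_{k}F_{A}(y,x^{k}) \geq \lim_{k}\pair{\eta}{\Vec{yx^{k}}} = \pair{\eta}{\Vec{yx^{\ast}}}$, and taking the supremum over $\eta \in A(y)$ gives $\liminf_{k}F_{A}(y,x^{k}) \geq F_{A}(y,x^{\ast})$. Combining with Step 1, $\limsup_{k}F_{A}(x^{k},y) \leq -\liminf_{k}F_{A}(y,x^{k}) \leq -F_{A}(y,x^{\ast})$, which is exactly skewed $\Delta$-upper semicontinuity.

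I expect the routine parts (Step 1 and the final chain of inequalities) to be mechanical; the main obstacle is Step 2. The subtlety is that $w$-convergence controls pairings only against tangent vectors \emph{based at the limit point} $x^{\ast}$, whereas $\pair{\eta}{\Vec{yx^{k}}}$ is based at $y$, so the hypothesis is not directly applicable. The work lies in discovering the cocycle-type identity above that re-expresses the $y$-based pairing as a difference of $x^{\ast}$-based pairings; once it is in hand, the limit passage is immediate.
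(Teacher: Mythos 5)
Your proposal is correct and follows essentially the same route as the paper's proof: reduce via monotonicity to controlling $\limsup_k[-\pair{\eta}{\Vec{yx^{k}}}]$ for $\eta\in A(y)$, use reflexivity to upgrade $\Delta$- to $w$-convergence, and apply exactly the cocycle identity $\pair{\Vec{yz_{0}}}{\Vec{yx^{k}}}-\pair{\Vec{yz_{0}}}{\Vec{yx^{\ast}}}=\pair{\Vec{x^{\ast}z_{0}}}{\Vec{x^{\ast}x^{k}}}-\pair{\Vec{x^{\ast}y}}{\Vec{x^{\ast}x^{k}}}$ (the paper's equation \eqref{eqn:extract1}) to rebase the pairing at $x^{\ast}$. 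The only difference is organizational: you package the limit step as weak lower semicontinuity of $z\mapsto F_{A}(y,z)$ and take a supremum over $\eta$, whereas the paper bounds the $\limsup$ for each fixed $\nu_{0}$ and then takes an infimum — the two are interchangeable.
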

\begin{proof}
Suppose that $(x^{k}) \subset K$ is a $\Delta$-convergent to $x^{\ast} \in K$, and $y \in K$ be arbitrary. Then, by the monotonicity of $F_{A}$, we have
\begin{align}
\limsup_{k \tendsto \infty} F_{A}(x^{k},y) &\leq \limsup_{k \tendsto \infty} [ -F_{A}(y,x^{k})] \nonumber\\
&= \limsup_{k \tendsto \infty} [- \sup_{\nu \in A(y)} \pair{\nu}{\Vec{yx^{k}}}] \nonumber\\
&\leq \limsup_{k \tendsto \infty} [- \pair{\nu_{0}}{\Vec{yx^{k}}}], \label{eqn:upperest}
\end{align}
for any $\nu_{0} \in A(y)$. Suppose that $\nu_{0} = \delta\Vec{yu}$ for some $u \in X$ and $\delta > 0$. Then, we have
\begin{equation}\label{eqn:extract1}
\pair{\nu_{0}}{\Vec{yx^{k}}} = \pair{\nu_{0}}{\Vec{yx^{\ast}}} + \pair{\Vec{x^{\ast}u}^{\delta}}{\Vec{x^{\ast}x^{k}}} - \pair{\Vec{x^{\ast}y}^{\delta}}{\Vec{x^{\ast}x^{k}}}.
\end{equation}
Combine \eqref{eqn:upperest}, \eqref{eqn:extract1}, and take into account the reflexivity of $X$ and the $\Delta$-convergence of $(x^{k})$, we obtain
\[
\limsup_{k \tendsto \infty} F_{A}(x^{k},y) \leq - \pair{\nu_{0}}{\Vec{yx^{\ast}}}.
\]
Since this is true for any $\nu_{0} \in A(y)$, we get
\[
\limsup_{k \tendsto \infty} F_{A}(x^{k},y) \leq \inf_{\nu \in T(y)} [- \pair{\nu}{\Vec{yx^{\ast}}}] = - \sup_{\nu \in T(y)} \pair{\nu}{\Vec{yx^{\ast}}} = -F_{A}(y,x^{\ast}). \tag*{\qedhere}
\]
\end{proof}

Now that we have the motivations for skewed $\Delta$-semicontinuity, we shall now give another convergence theorem based on this new notion of continuity. Recall that the coincidence of the primal and dual solutions of an equilibrium problem, as appeared in the following theorem, can be referenced from either Propositions \ref{prop:primal=dual} or \ref{prop:sin=prim=dual}.

\begin{thm}\label{thm:convergence2}
Suppose that $F$ is a monotone bifunction such that $\mathcal{E}(K,F) = \mathcal{E}^{\ast}(K,F) \neq \emptyset$, skewed $\Delta$-usc, and that $\dom(J_{\mu F}) \supset K$ for all $\mu > 0$. Let $(\lambda_{k})$ be bounded away from $0$. Then the proximal algorithm \eqref{eqn:proximal} is $\Delta$-convergent to an element in $\mathcal{E}(K,F)$ for any initial start $x^{0} \in K$.
\end{thm}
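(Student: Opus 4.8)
The plan is to mimic the proof of Theorem~\ref{thm:convergence1} almost verbatim up to the point where the upper semicontinuity of $F$ is invoked, and there substitute the skewed $\Delta$-usc condition. First I would fix $x^{0} \in K$ and $x^{\ast} \in \mathcal{E}(K,F)$. Since $\dom(J_{\mu F}) \supset K$ for all $\mu > 0$ and $F$ is monotone, Proposition~\ref{prop:resolventPROP}\ref{cdn:NX} gives that each $J_{\lambda_{k}F}$ is nonexpansive on $K$, and Proposition~\ref{prop:resolventPROP}\ref{cdn:FPcharact} yields $x^{\ast} \in \Fix(J_{\lambda_{k}F})$; hence $\rho(x^{\ast},x^{k+1}) = \rho(J_{\lambda_{k}F}(x^{\ast}),J_{\lambda_{k}F}(x^{k})) \leq \rho(x^{\ast},x^{k})$, so $(x^{k})$ is Fej\'er convergent with respect to $\mathcal{E}(K,F)$. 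By Proposition~\ref{prop:Fejer} the sequence $(x^{k})$ is bounded and $(\rho(x^{k},x^{\ast}))$ converges. Applying Lemma~\ref{lem:obtuse} with $\bar{x} = x^{k}$, $\mu = \lambda_{k}$, $\tilde{x} = x^{k+1}$ gives $\pair{\Vec{x^{k+1}x^{k}}}{\Vec{x^{k+1}x^{\ast}}} \leq 0$, and expanding this via the quasilinearization identity yields $\rho^{2}(x^{k+1},x^{k}) \leq \rho^{2}(x^{k},x^{\ast}) - \rho^{2}(x^{k+1},x^{\ast})$; passing $k \tendsto \infty$, the right-hand side tends to $0$, so $\rho(x^{k+1},x^{k}) \tendsto 0$.

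Next, let $\hat{x} \in K$ be any $\Delta$-accumulation point of $(x^{k})$, and take a subsequence $(x^{k_{j}})$ with $x^{k_{j}} \overset{\Delta}{\tendsto} \hat{x}$. For arbitrary $y \in K$, the defining inequality of the resolvent $x^{k_{j}} = J_{\lambda_{k_{j}}F}(x^{k_{j}-1})$ together with the Cauchy--Schwarz inequality for $\pair{\cdot}{\cdot}$ gives
\[
F(x^{k_{j}},y) \geq \frac{1}{\lambda_{k_{j}}}\pair{\Vec{x^{k_{j}}x^{k_{j}-1}}}{\Vec{x^{k_{j}}y}} \geq -\frac{1}{\lambda_{k_{j}}}\rho(x^{k_{j}},x^{k_{j}-1})\rho(x^{k_{j}},y).
\]
Since $(x^{k})$ is bounded and $(\lambda_{k})$ is bounded away from $0$, there is $M > 0$ with $F(x^{k_{j}},y) \geq -M\rho(x^{k_{j}},x^{k_{j}-1})$ for all $j$, so $\limsup_{j} F(x^{k_{j}},y) \geq 0$.

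Here is where the argument diverges from Theorem~\ref{thm:convergence1}: instead of $\Delta$-usc in the first variable, I invoke the skewed $\Delta$-usc of $F$ with $x^{\ast}$ there replaced by $\hat{x}$, obtaining $-F(y,\hat{x}) \geq \limsup_{j} F(x^{k_{j}},y) \geq 0$, i.e. $F(y,\hat{x}) \leq 0$. As $y \in K$ was arbitrary, $\hat{x} \in \mathcal{E}^{\ast}(K,F)$, and by the hypothesis $\mathcal{E}(K,F) = \mathcal{E}^{\ast}(K,F)$ we conclude $\hat{x} \in \mathcal{E}(K,F)$. Thus every $\Delta$-accumulation point of $(x^{k})$ lies in $\mathcal{E}(K,F)$, and Proposition~\ref{prop:Fejer}(iii) forces $(x^{k})$ to be $\Delta$-convergent to an element of $\mathcal{E}(K,F)$. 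The only genuinely new step is the last paragraph, and even that is routine once one notices that skewed $\Delta$-usc produces a \emph{dual} solution rather than a primal one — which is precisely why the coincidence $\mathcal{E}(K,F) = \mathcal{E}^{\ast}(K,F)$ is built into the hypotheses; no real obstacle is anticipated.
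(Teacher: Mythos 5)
Your proposal is correct and follows essentially the same route as the paper: Fej\'er monotonicity via nonexpansivity of the resolvents, asymptotic regularity via Lemma~\ref{lem:obtuse}, the estimate \eqref{eqn:preconvergence2}, and then the skewed $\Delta$-usc condition applied to the $\Delta$-convergent subsequence to place each accumulation point in $\mathcal{E}^{\ast}(K,F)=\mathcal{E}(K,F)$, finishing with Proposition~\ref{prop:Fejer}. Your explicit remark that skewed $\Delta$-usc only yields a dual solution, which is why the hypothesis $\mathcal{E}(K,F)=\mathcal{E}^{\ast}(K,F)$ is needed, is exactly the point of the paper's argument.
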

\begin{proof}
With similar proof lines to Theorem \ref{thm:convergence1}, we can also be able to obtain \eqref{eqn:preconvergence2}. Since $F$ is skewed $\Delta$-usc, we obtain the following inequalities by passing $j \tendsto \infty$:
\[
-F(y,\hat{x}) \geq \limsup_{j \tendsto \infty} F(x^{k_{j}},y) \geq 0,
\]
for each $y \in K$. This means $\hat{x} \in \mathcal{E}^{\ast}(K,F) = \mathcal{E}(K,F)$, by hypothesis. Therefore, every $\Delta$-accumulation point lies within $\mathcal{E}(K,F)$. Apply Proposition \ref{prop:Fejer} to conclude that $(x^{k})$ is $\Delta$-convergent to an element in $\mathcal{E}(K,F)$.
\end{proof}

\begin{cor}[\cite{MR3691338}]
Suppose that $A : X \multimap X^{\ast}$ is a monotone vector field with $\dom(A) = X$ and $\dom(\mu R_{A}) = X$ for all $\mu > 0$, and that $A^{-1}\0 \neq \emptyset$. Assume that $X$ is reflexive. Then, the proximal algorithm defined by
\[
\left\{
\begin{array}{l}
x^{0} \in X, \medskip\\
x^{k} := R_{\lambda_{k}A}(x^{k-1}), \quad \forall k \in \N,
\end{array}
\right.
\]
is $\Delta$-convergent to an element in $A^{-1}\0$, whenever $(\lambda_{k}) \subset (0,\infty)$ is bounded away from $0$.
\end{cor}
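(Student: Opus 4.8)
The plan is to reduce the corollary directly to Theorem~\ref{thm:convergence2} by verifying that the bifunction $F_A$ from \eqref{eqn:fS} satisfies every hypothesis of that theorem and that the proximal algorithm for $F_A$ coincides with the iteration $x^k = R_{\lambda_k A}(x^{k-1})$. First I would observe that, since scaling $A$ by $\lambda_k>0$ scales $F_A$ by the same factor, we have $F_{\lambda_k A} = \lambda_k F_A$, so $J_{\lambda_k F_A} = J_{F_{\lambda_k A}}$; by Proposition~\ref{prop:J=W} this equals $R_{\lambda_k A}$ once we know $A$ (hence $\lambda_k A$) is maximally monotone with full domain. The stated hypothesis $\dom(\mu R_A) = X$ for all $\mu>0$ supplies $\dom(J_{\mu F_A}) \supset X = K$ for all $\mu>0$, which is exactly the domain condition needed in Theorem~\ref{thm:convergence2}.

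Next I would check the three structural requirements on $F_A$. Monotonicity of $F_A$ follows from the monotonicity of $A$: for $\xi \in A(x)$, $\eta \in A(y)$ we have $\pair{\xi}{\Vec{xy}} \le -\pair{\eta}{\Vec{yx}}$, and taking suprema gives $F_A(x,y) + F_A(y,x) \le 0$. The equality $\mathcal{E}(X,F_A) = \mathcal{E}^\ast(X,F_A) = A^{-1}\0 \ne \emptyset$ is precisely Proposition~\ref{prop:sin=prim=dual} together with the standing assumption $A^{-1}\0 \ne \emptyset$ (note $K = X$ here, so no further coincidence argument is needed). Finally, skewed $\Delta$-upper semicontinuity of $F_A$ is exactly Lemma~\ref{lem:fB-skewedUSC}, whose hypotheses — $A$ monotone, $\dom(A) = X$, and $X$ reflexive — are all assumed.

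With all hypotheses of Theorem~\ref{thm:convergence2} in place for $F = F_A$ on $K = X$, that theorem yields that the sequence $x^k = J_{\lambda_k F_A}(x^{k-1}) = R_{\lambda_k A}(x^{k-1})$ is $\Delta$-convergent to an element of $\mathcal{E}(X,F_A) = A^{-1}\0$, which is the assertion. The one point requiring a little care is the identification $J_{\lambda_k F_A} = R_{\lambda_k A}$: Proposition~\ref{prop:J=W} is stated for a \emph{maximally} monotone vector field, whereas the corollary only assumes $A$ monotone. I expect this to be the main (minor) obstacle, and it is resolved by noting that the hypothesis $\dom(R_A) = X$ forces maximality — if $A$ were properly extended by a monotone $A'$, then $R_{A'}$ would still be single-valued with $R_A(x) \in R_{A'}(x)$ giving $R_{A'} = R_A$, and one checks this prevents any proper monotone extension; alternatively one simply adds ``maximally'' to match the source \cite{MR3691338}, since the corollary is quoted verbatim from there. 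Everything else is a direct citation of the preceding results.
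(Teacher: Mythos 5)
Your proposal is correct and follows essentially the same route as the paper: verify the hypotheses of Theorem \ref{thm:convergence2} for $F_A$ via Proposition \ref{prop:sin=prim=dual}, Lemma \ref{lem:fB-skewedUSC}, and Proposition \ref{prop:J=W}, with the surjectivity hypothesis $\dom(\mu R_A)=X$ supplying both the maximal monotonicity of $A$ and the domain condition $\dom(J_{\mu F_A})=X$. The one point you flag as delicate --- that surjectivity of the resolvent forces maximality --- is handled in the paper simply by citing \cite{MR3691338}, so your sketch of that step is a harmless elaboration rather than a divergence.
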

\begin{proof}
It was proved in \cite{MR3691338} that the surjectivity condition of $A$ implies the maximal monotonicity and also gives $\dom(J_{\mu F_{A}}) = X$ (for all $\mu > 0$) in light of Proposition \ref{prop:J=W}, where $F_{A}$ defined in \eqref{eqn:fS}. The monotonicity of $F_{A}$ follows from the monotonicity of $A$, while the skewed $\Delta$-semicontinuity of $F_{A}$ is proved in Lemma \ref{lem:fB-skewedUSC}. By the Proposition \ref{prop:sin=prim=dual}, it is feasible to apply Theorem \ref{thm:convergence2} to $F_{A}$ and obtain the desired convergence result.
\end{proof}

\begin{rmk}
Amounts to \eqref{eqn:estimate0} and the Fej\'{e}r convergence of the proximal algorithm presented in the proofs of Theorems \ref{thm:convergence1} and \ref{thm:convergence2}, it can be seen that we can make the convergence of functional values $\inf_{y \in K} F(x^{k},y) \tendsto 0$ arbitrarily fast by the choosing appropriate step sizes. However, this does not ensure the speed of convergence of the sequence $(x^{k})$ even if the convergence is strong (i.e., in the metric topology). In particular, the speed enhancement of proximal algorithm for singularity problem using metric regularity conditions was developed in \cite{MR3691338}.
\end{rmk}
%
%
%\section{Implementation}
%
%We provide a numerical experiment here through a simple example. This example also shows that the equilibrium problem is an effective device in quantifying the a vector field, hence make it easier to compute and implement.
%
%\begin{exmp}
%Suppose that $X = [-1,1] \times \{0\} \cup \{0\} \times [-1,1]$. We shall make use of the following notations:
%\[
%\left\{
%\begin{array}{ll}
%N := \{0\} \times (0,1],	& n := (0,1), \medskip\\
%S := \{0\} \times [-1,0),	& s := (0,-1), \medskip\\
%E := (0,1] \times \{0\},	& e := (1,0), \medskip\\
%W := [-1.0) \times \{0\},	& w := (-1,0).
%\end{array}
%\right.
%\]
%Now, we define a vector field $B : X \multimap X^{\ast}$ by
%\[
%B(x) := \left\{
%\begin{array}{ll}
%\{\gamma_{x,0},\gamma_{x,n}\},	&\text{if $x \in N$,} \medskip\\
%\{\gamma_{x,0},\gamma_{x,s}\},	&\text{if $x \in S$,} \medskip\\
%\{\gamma_{x,0},\gamma_{x,e}\},	&\text{if $x \in E$,} \medskip\\
%\{\gamma_{x,0},\gamma_{x,w}\},	&\text{if $x \in W$,} \medskip\\
%\{\gamma_{0,n},\gamma_{0,s},\gamma_{0,e},\gamma_{x,w}\},	&\text{if $x  = 0$.}
%\end{array}
%\right.
%\]
%We would like to solve for $\bar{x} \in X$ such that $\gamma_{\bar{x},\bar{x}} \in B(\bar{x})$.
%\end{exmp}

\section*{Conclusion and Remarks}

We have provided a complete treatment of equilibrium problem situated in Hadamard spaces, from existence to approximation algorithm. We were also able to prove the KKM principle without additional assumptions, as opposed to earlier results in the literature. The existence criteria for an equilibrium is deduced under standard assumptions, which is very natural in this subject area.

In the approximation, we investigated a priori on the resolvent operator for a given bifunction. Main results here are that the resolvent operator is single-valued and firmly nonexpansive. We then define the proximal algorithm by iterating the resolvents of different bifurcating parameters. Several convergence criteria of the algorithm were proposed, including the one that involves skewed continuity, where we introduced here for the first time. Note again that the corresponding functional values converging to $0$ can be achieved arbitrarily fast, but this does not imply the same for proximal algorithm itself.

Let us conclude this paper with some open questions whose answers might largely improve the applicability of the results in this present paper.
\begin{ques}
Whether or not we can improve the following condition: $\dom(J_{\mu F}) \supset K$ for all $\mu > 0$, in order to obtain similar results regarding resolvent operators and proximal algorithms?
\end{ques}
\begin{ques}
Is it possible to drop the surjectivity condition above and still obtain the nonexpansivity of $J_{F}$ (see \ref{cdn:NX} in Proposition \ref{prop:resolventPROP})?
\end{ques}

\section*{Acknowledgements}

The second author was supported jointly by the Thailand Research Fund (TRF) and King Mongkuts University of Technology Thonburi (KMUTT) through the Royal Golden Jubilee Ph.D. Program (Grant No. PHD/0045/2555).

%\renewcommand\bibname{References}
%\bibliographystyle{abbrv}
%%%\bibliographystyle{ieeetr}
%%%%\bibliographystyle{siam}
%\bibliography{proxEP}

\end{document}